\documentclass[12pt,reqno,a4paper]{amsart}
\usepackage{
    amsmath,  amsfonts, amssymb,  amsthm,   amscd,
    gensymb,  graphicx, comment,  etoolbox, url,
    booktabs, stackrel, mathtools,enumitem, mathdots,  microtype, lmodern,    mathrsfs, graphicx, tikz,  longtable,tabularx, float, tikz, pst-node, tikz-cd, multirow, tabularx, amscd,  bm, array, makecell, diagbox, booktabs,ragged2e, caption, subcaption }
\usepackage{makecell,slashbox}
\usepackage{xcolor}
\usepackage[all]{xy}
\usepackage{graphicx}

\usepackage[utf8]{inputenc}
\usepackage{microtype, fullpage, wrapfig,textcomp,mathrsfs,csquotes,fbb}
\usepackage[colorlinks=true, linkcolor=blue, citecolor=blue, urlcolor=blue, breaklinks=true]{hyperref}
\usepackage[capitalise]{cleveref}
\setlength{\marginparwidth}{2cm}
\usepackage{todonotes}
\usetikzlibrary{positioning}
\usetikzlibrary{shapes,arrows.meta,calc}
\usetikzlibrary{arrows}

\makeatletter
\def\@tocline#1#2#3#4#5#6#7{\relax
  \ifnum #1>\c@tocdepth 
  \else
    \par \addpenalty\@secpenalty\addvspace{#2}%
    \begingroup \hyphenpenalty\@M
    \@ifempty{#4}{%
      \@tempdima\csname r@tocindent\number#1\endcsname\relax
    }{%
      \@tempdima#4\relax
    }%
    \parindent\z@ \leftskip#3\relax \advance\leftskip\@tempdima\relax
    \rightskip\@pnumwidth plus4em \parfillskip-\@pnumwidth
    #5\leavevmode\hskip-\@tempdima
      \ifcase #1
       \or\or \hskip 1em \or \hskip 2em \else \hskip 3em \fi%
      #6\nobreak\relax
    \hfill\hbox to\@pnumwidth{\@tocpagenum{#7}}\par
    \nobreak
    \endgroup
  \fi}
\makeatother

\newcommand{\mcat}{\mathrm{cat}_m}

\newtheorem{theorem}{Theorem}[section]
\newtheorem{property}{Property}[]

\newtheorem{corollary}[theorem] {Corollary}
\newtheorem{definition}[theorem]{Definition}
\newtheorem{example}[theorem]{Example}
\newtheorem{lemma}[theorem]{Lemma}

\newtheorem{proposition}[theorem]{Proposition}
\newtheorem{remark}[theorem]{Remark}

\newcommand\id{\operatorname{id}}
\newcommand\hdim{\operatorname{hdim}}

\newcommand{\Dm}{\operatorname{D_m}}
\newcommand{\Dn}{\operatorname{D_n}}

\newcommand\HD{\operatorname{H^{*}D}}
\newcommand\HDn{\operatorname{H^{*}D_{n}}}
\newcommand\HDm{\operatorname{H^{*}D_{m}}}
\newcommand\HDmB{\operatorname{H^*D_{m,B}}}

\newcommand{\TC}{\operatorname{TC}}
\newcommand\TCm{\operatorname{TC^m}}

\newcommand{\ct}{\mathrm{cat}}

\newcommand{\pr}{\mathrm{pr}}

\newcommand\R{\mathbb{R}}
\newcommand\Z{\mathbb{Z}}

\newcommand{\J}{\mathcal{J}}

\newcommand{\sct}{\mathrm{secat}}
\newcommand{\sctm}{\mathrm{secat}_{m}}

\newcolumntype{x}[1]{>{\centering\arraybackslash}p{#1}}

\hbadness=99999
\hfuzz=999pt

\begin{document}
\title{On the \lowercase{m}-dimensional sectional category and induced invariants}
\author[R. Singh]{Ramandeep Singh Arora}
\address{Department of Mathematics, Indian Institute of Science Education and Research Pune, India}
\email{ramandeepsingh.arora@students.iiserpune.ac.in}
\email{ramandsa@gmail.com}
\author[S. Datta]{Sutirtha Datta}
\address{Department of Mathematics, Indian Institute of Science Education and Research Pune, India}
\email{sutirtha2702@gmail.com}
\author[N. Daundkar]{Navnath Daundkar}
\address{Department of Mathematics, Indian Institute of Technology Madras, Chennai, India.}
\email{navnath@iitm.ac.in}
\author[G. C. Dutta]{Gopal Chandra Dutta}
\address{Statistics and Mathematics Unit, Indian Statistical Institute, Kolkata-700108, India.}
\email{gdutta670@gmail.com}
	
\begin{abstract}
In this paper, we systematically study the $m$-dimensional sectional category of a fibration, introduced by Schwarz, as an approximating invariant for the sectional category. 
We develop the basic theory of this invariant, establish its fundamental properties, and show how it gives rise to a hierarchy of induced invariants, including the $m$-dimensional Lusternik--Schnirelmann category, the $m$-topological complexity, and the $m$-homotopic distance between maps. 
We further investigate the relationships between these $m$-dimensional invariants and their classical analogues, present a variety of examples in which these invariants are computed, and illustrate when they agree with or differ from their classical counterparts. 
We also introduce the notion of $m$-cohomological distance and study its interaction with the $m$-homotopic distance.
\end{abstract}

\keywords{sectional category, Lusternik-Schnirelmann category, topological complexity, $m$-homotopic distance}
\subjclass[2020]{55M30, 55P45, 55R10}
\maketitle



\section{Introduction}
The notion of sectional category of a fibration $p\colon E \to B$ was introduced by Schwarz \cite{schwarz1961genus} 
as a generalization of the Lusternik--Schnirelmann category \cite{L-S-cat} of a space. 
The sectional category of $p$, denoted by $\sct(p)$, is the least integer $k$ such that $B$ admits an open cover $\{U_i\}_{i=0}^k$ for which each $U_i$ admits a continuous section of $p$. 
This notion then developed further by Berstein and Ganea in~\cite{B-G}, and subsequently by James in~\cite{James}. 
Since its introduction, sectional category has found deep connections with classical problems in homotopy theory, as well as modern applications in areas such as topological robotics.

Important special cases of sectional category include the topological complexity $\TC(X)$ of a space $X$ and the Lusternik--Schnirelmann category $\ct(X)$.
Topological complexity was introduced by Farber in \cite{Farber-TC}. 
It can be interpreted as the sectional category of the free path space fibration
$\pi_X \colon PX \to X \times X$, defined by $\pi_X(\gamma) = (\gamma(0), \gamma(1))$, where $PX$ denotes the free path space endowed with the compact--open topology. 
On the other hand, for a topological space $X$ with a chosen base point $x_0$, the Lusternik--Schnirelmann category $\ct(X)$ can be interpreted as the sectional category of the based path space fibration $e_X \colon P_{x_0}X \to X$, defined by $e_X(\gamma) = \gamma(1)$, where $P_{x_0}X$ denotes the based path space, also equipped with the compact--open topology.
Equivalently, $\ct(X)$ is the least integer $k$ such that $B$ admits an open cover $\{U_i\}_{i=0}^k$ for which each inclusion $U_i \hookrightarrow X$ is nullhomotopic. 
Variants and refinements of the sectional category have further led to the study of homotopic distance between maps, equivariant invariants, and parametrized versions of motion planning complexity.

In his original work, Schwarz did not restrict attention solely to the absolute invariant $\sct(p)$, but also introduced the notion of the \emph{$m$-dimensional genus}, providing a finite-dimensional approximation to sectional category. Motivated by this idea, we consider the $m$-dimensional sectional category $\sct_m(p)$ as an approximating invariant for $\sct(p)$. The sequence
\[
\sct_1(p) \leq \sct_2(p) \leq \cdots \leq \sct(p)
\]
captures increasingly refined information about the fibration, stabilizing to the classical sectional category under suitable conditions.

The first aim of this paper is to develop a systematic study of $\sct_m$, including its basic properties, fiber-preserving homotopy invariance, and functorial behavior. 
We show that $\sct_m$ naturally induces a hierarchy of $m$-dimensional invariants extending several classical notions. 
In particular, we  study the $m$-dimensional Lusternik--Schnirelmann category, the $m$-topological complexity, and the $m$-homotopic distance between maps. 
These invariants interpolate between low-dimensional, computable approximations and their full homotopy-theoretic counterparts.

Our second goal is to analyze how these $m$-dimensional invariants relate to known bounds and cohomological methods. 
We show that many classical lower and upper bounds admit natural refinements at the $m$-dimensional level, leading to sharper estimates in concrete examples. 

\subsection{$m$-sectional category and $m$-category}
For a given fibration $p \colon E \to B$, the \emph{$m$-sectional category} of $p$, denoted by $\sct_m(p)$, is the smallest integer $k$ such that the base space $B$ admits an open cover by $k+1$ sets with the following property: for an open set $U$ in the cover, every map $\phi \colon P \to U$ from an $m$-dimensional CW complex $P$ admits a map $s \colon P \to E$ such that $p \circ s = \iota_U \circ \phi$, where $\iota_U \colon U \hookrightarrow B$ denotes the inclusion.
We show in \Cref{prop: fiber-preserving homotopy invariance} that $\sct_m(p)$ is a fibre-preserving homotopy invariant of a fibration $p$.
We prove various homotopy theoretic properties of $\sct_m$ in \Cref{sec: msecat}. We establish the cohomological lower bound on $\sct_m$ in \Cref{prop: cohomological lower bound of m-secat}. 
In particular, for a fibration $p\colon E\to B$, suppose there exist cohomology classes $u_0,\ldots,u_k$ of $B$, with coefficients in a commutative ring $R$, such that for each $i$, we have $p^*(u_i)=0$, but their cup product is nonzero. 
If the largest degree among these classes is $m$, then 
\[
    \sct_m(p)\geq k+1.
\]
We also establish the product inequality for $\sct_m$:
\[
    \sctm(p_1 \times p_2) \leq \sctm(p_1) + \sctm(p_2),
\]
where $p_i \colon E_i\to B_i$ are fibrations with $B_1$ and $B_2$ normal spaces.

In \Cref{sec: m-category}, we introduce the notion of the $m$-category, denoted by $\mcat(X)$, of a topological space $X$.
Then, in \Cref{subsec: m-category of map}, we extend this notion to maps and and explore, in \Cref{prop: secatm and catmf}, the relationship between the $m$-sectional category of the pullback of a fibration $p \colon E \to B$ along a map $f \colon B' \to B$ and the invariant $\ct_m(f)$.
As an application of this result, we provide examples of fibrations exhibiting strict inequalities and equalities among $\sct_m$, $\sct_n$, and $\sct$ for different values of $m$ and $n$.
In particular, for the free path space fibration $\pi_{S^n} \colon P(S^n) \to S^n \times S^n$, with $n \geq 2$, we show in \Cref{Example 1} that 
$$
    \sct_m(\pi_{S^n})=0 \text{ for } m\leq n-1, \text{ whereas } \sct(\pi_{S^n})=\TC(S^n)\neq 0.
$$ 
For a nontrivial principal $K(G,1)$-bundle $p \colon E \to B$, where $G$ is a discrete abelian group and $B$ is a simply connected CW complex, we show in \Cref{Example 2} that 
$$
    \sct_1(p)=0, \text{ while } \sct_{m}(p)=\sct(p)\neq 0 \text{ for all } m\geq 2.
$$ 
For the universal covering map $p\colon S^n\to \R P^n$, we show in \Cref{Example 3} that 
$$
    \sct_m(p) = \sct(p)=n \text{ for all } m.
$$
Further, by observing in \Cref{cor: m-cat = m-secat(e_X)} that $\ct_m(X)=\sct_m(e_X)$, we establish the product inequality for $\ct_m$ in \Cref{prop: prod-ineq for m-cat} and a cohomological lower bound for $\ct_m$.
In particular, for a path-connected topological space $X$, we show in \Cref{prop: cohomological lower bound on m-cat} that, if $u_0,\ldots,u_k \in H^*(X;R)$ have nonzero cup product and $m=\max\{\deg(u_i)\}$, then 
$$
    \mcat(X)\ge k+1.
$$
We also obtain, in \Cref{prop: m-cat = cat if X is ''aspherical''}, a sufficient condition on the space $X$ under which $\ct_m(X)=\ct(X)$.
We conclude the section by computing the $m$-category of Moore spaces and products of spheres in \Cref{ex: m-cat of Moore spaces} and \Cref{ex: m-cat of product of spheres}, respectively.
More precisely, the $m$-category of the Moore space $M(G,n)$ is given by
$$
\mcat(M(G,n)) =
\begin{cases}
    0 & \text{ if } m<n,\\
    1 & \text{ if } m\geq n,
\end{cases}
$$
and the $m$-category of the product of spheres $S^{\overline{n}} = S^{n_1} \times S^{n_2} \times \dots \times S^{n_k}$, where $n_1 < n_2 < \dots < n_k$, is given by
$$
\mcat(S^{\overline{n}}) =
\begin{cases}
    0 & \text{ if } m<n_1,\\
    i & \text{ if } n_i \leq m < n_{i+1},\\
    k & \text{ if } m \geq n_{k}.
    \end{cases}
$$

\subsection{$m$-homotopic distance and $m$-topological complexity}
In \Cref{sec: mhomotopic distance}, we revisit the notion of $m$-homotopic distance, which was originally introduced by Mac\'ias-Virg\'os, Mosquera-Lois  and Oprea in \cite{m-hom-dist}. 
The \emph{$m$-homotopic distance} between two maps $f,g \colon X \to Y$, denoted by $\Dm(f,g)$, is the smallest integer $k$ such that $X$ admits an open cover by $k+1$ sets on each of which the restrictions of $f$ and $g$ become homotopic when precomposed with any map from an $m$-dimensional CW complex.
We show in \Cref{thm: m-hom-dist(f:g) = m-secat(Pi_1)}, that the $m$-homotopic distance between maps $f,g;X\to Y$ can be expressed as the $m$-sectional category of a fibration which is a pullback of $\pi_Y\colon PY\to Y\times Y$ along the map $(f,g)\colon X\to Y\times Y$.

In \Cref{subsec: properties of m homotopic distance} and \Cref{subsec: invariance}, we explore several new properties and establish new results and relationships with the $m$-category,
which were not considered in \cite{m-hom-dist}. We also obtain cohomological lower bound on the $m$-homotopic distance.
In particular, for maps $f,g:X\to Y$, we show in \Cref{prop: cohomological lower bound on Dm} that, if there exist cohomology classes $u_0,\ldots,u_k \in H^*(Y \times Y;R)$ such that for each $i$, $\Delta_Y^*(u_i)=0$ and $(f,g)^*(u_0\smile \dots \smile u_k)\neq 0$, and if $m=\max\{\deg(u_i)\}$, then 
$$
    \Dm(f,g) \ge k+1.
$$

Furthermore, in \Cref{subsubsec: m cohomological distance} we introduce the notion of $m$-cohomological distance, generalizing the cohomological distance recently introduced in \cite{Coh-dist}, and show that it provides a lower bound for the $m$-homotopic distance.
The \emph{$m$-cohomological distance} between maps $f,g \colon X \to Y$ with coefficients in $R$, denoted by $\HDm(f,g;R)$, is the smallest integer $k$ such that $X$ admits an open cover by $k+1$ sets on which the induced maps in cohomology agree after precomposition with any map from an $m$-dimensional CW complex. We show the monotonicity property in \Cref{prop: monotonicity of m-cohom-dist}.
We establish the cohomological lower bound on $\HDm(f,g;R)$ in \Cref{thm : lower bound for H*Dm}. 
In particular, let $\J(f,g;R)$ denote the image of $f^*-g^* \colon H^*(Y;R)\to H^*(X;R)$. We show that, if there exist classes $u_0,\ldots,u_k \in \J(f,g;R)$ with nonzero cup product and $m=\max\{\deg(u_i)\}$, then
\[
    \HDm(f,g;R) \ge k+1.
\]
We also obtain, in \Cref{prop: m-hom-dist = hom-dist if Y is ''aspherical''}, a sufficient condition on the space $Y$ under which $\Dm(f,g) = \mathrm{D}(f,g)$.
In \Cref{prop: dimension-connectivity bound on Dm}, we obtain the following dimension-connectivity bound. In particular, for $f,g \colon X\to Y$, with $X$ is a CW complex and $\pi_{k+1}(Y)=0$ for all $k \geq m$, we show that 
\[
    \Dm(f,g) < \frac{\mathrm{hdim}(X)+1}{\mathrm{conn}(Y)+1}.
\]

In \Cref{sec: mTC}, we define the \emph{$m$-topological complexity} of a space $X$, denoted by $\TCm(X)$, by
\[
\TCm(X) := \sct_m(\pi_X),
\]
where $\pi_X$ is the free path space fibration. In \Cref{prop: m-TC(X) = m-hom-dist(pr_1:pr_2)}, we show that this definition coincides with the notion of $m$-topological complexity introduced by Mac\'ias-Virg\'os, Mosquera-Lois, and Oprea in \cite{m-hom-dist} via $m$-homotopic distance.
The advantage of our approach is that it allows us to derive several fundamental results, including a cohomological lower bound and a product inequality. 
In particular, in \Cref{prop: cohomological lower bound of m-TC}, we prove the following: suppose $X$ has finitely generated homology and let $\mathbb{K}$ be a field. 
If there exist classes $u_0,\ldots,u_k$ in the kernel of the cup product $\smile \colon H^*(X;\mathbb{K}) \otimes_{\mathbb{K}} H^*(X;\mathbb{K}) \to H^*(X;\mathbb{K})$, whose product is nonzero, and if $m = \max\{\deg(u_i)\}$, then
\[
    \TC^m(X) \ge k+1.
\]
Then we present examples computing $\TCm$ of complex projective spaces, real projective spaces, products of spheres, and Moore spaces in \Cref{ex: m-TC of complex projective spaces}, \Cref{ex: m-TC of real projective spaces}, \Cref{ex: m-TC of product of spheres}, and \Cref{ex: m-TC of Moore spaces}, respectively.
More precisely, $\TCm(\mathbb{C}P^n) = 0$ if $m=1$ and $\TCm(\mathbb{C}P^n) = 2n$ if $m \geq 2$, and $\TCm$ of $S^{\overline{n}} = S^{n_1} \times S^{n_2} \times \dots \times S^{n_k}$, where $n_1 < n_2 < \dots < n_k$, is given by
$$
\TCm(S^{\overline{n}}) = 
\begin{cases}
    0, & \text{ if } m < n_1,\\ 
    i+l(i), & \text{ if } n_i \leq m < n_{i+1},\\
    k+l(k), & \text{ if } m \geq n_k,
\end{cases}
$$
where $l(i)$ denotes the number of even-dimensional spheres in the set $\{S^{n_1},S^{n_2},\dots,S^{n_{i}}\}$.
We also obtain, in \Cref{prop: m-TC = TC if F is ''aspherical''}, a sufficient condition on the space $X$ under which $\TC^m(X) = \TC(X)$.
As an application of this result and \Cref{prop: m-cat = cat if X is ''aspherical''}, we compute $\mcat$ and $\TCm$ of closed orientable surfaces of genus at least 1 and non-orientable surfaces of genus strictly greater than $1$ in \Cref{ex: m-TC of surfaces}. 

In \Cref{sec: H-spaces}, we study the $m$-homotopic distance between maps from or into $H$-spaces. 
For a path-connected $H$-space $X$ and maps $f,g \colon X \times X \to X$, we prove in \Cref{thm: H-space m-hom-dist(f:g) leq m-cat(X)} the inequality
\[
    \Dm(f,g) \le \ct_m(X).
\]
As a consequence, we obtain an $m$-category analogue of a theorem of Lupton and Scherer. 
In particular, for an $H$-space $X$, we show in \Cref{cor: TCm leq catm for H spaces} that
\[
    \TCm(X) = \ct_m(X).
\]
We conclude the section with an example computing the $m$-homotopic distance between the identity of the Lie group $U(2)$ and the inversion map.

\subsection{Notations and Conventions}
Suppose $X$ is a topological space.
\begin{itemize}
    \item If $U \subseteq X$, then $\iota_U \colon U \hookrightarrow X$ denotes the inclusion map into $X$.
    \item If $x_0 \in X$, then $c_{x_0} \colon Y \to X$ denotes the constant map from a topological space $Y$ taking the value $x_0$ in $X$.
    \item The maps $\pr_i \colon X \times X \to X$, $i = 1,2$, denote the projection onto the $i$th coordinate.
\end{itemize}

\section{$m$-sectional category}\label{sec: msecat}
In this section, we investigate the notion of the $m$-sectional category of a fibration. We show that the $m$-sectional category is a fibre-preserving homotopy invariant of a fibration. We study its relationship with the classical sectional category, establish a cohomological lower bound, and prove a product inequality.

We define this notion in a slightly different way from the original definition given by Schwarz in \cite{schwarz1961genus}. This alternative formulation will be used throughout the paper, as it is more convenient for our purposes. We also show that the two definitions are equivalent.

\begin{definition}\label{def: secatm}
Given a fibration $p \colon E \to B$, we define $m$-sectional category of $p$, denoted by $\sctm(p)$, as the least integer $k$ such that $B$  admits an open cover $\{U_i\}_{i=0}^{k}$, where each $U_i$ has the following property:
for any $m$-dimensional CW complex $P$ and any map $\phi \colon P \to U_i$, 
there exists a map $s \colon P \to E$ such that $p \circ s = \iota_{U_i} \circ \phi$, where $\iota_{U_i} \colon U_i \hookrightarrow B$ is the inclusion map.
This can be depicted as follows:
\[
\begin{tikzcd}
                                                                  &                                    & E \arrow[d, "p"] \\
P \arrow[r, "\phi"] \arrow[rru, "\exists ~ s", dashed, bend left] & U_i \arrow[r, "\iota_{U_i}", hook] & {B\,.}          
\end{tikzcd}
\]
In the case of non-existence of such open covers, $\sctm(p) := \infty$.
\end{definition}

For the sake of notational simplicity and the convenience of the reader, we record the property appearing in the \Cref{def: secatm} in the following form. This formulation will be used repeatedly in the subsequent results, and we will refer to it instead of restating the entire property each time.

\begin{property}
Suppose $p \colon E \to B$ is a fibration.
An open subset $U$ of $B$ will be said to satisfy property $A_{m,\,p}$ if, for every $m$-dimensional CW complex $P$ and every map $\phi \colon P \to U$, there exists a map $s \colon P \to E$ satisfying $p \circ s = \iota_U \circ \phi$.
An open cover $\mathcal{U}$ of $B$ will be said to satisfy property $A_{m,\,p}$ if every element of $\mathcal{U}$ satisfies property $A_{m,\,p}$.
\end{property}

Hence, $\sctm(p)$ is the least integer $k$ such that there exists an open cover $\{U_i\}_{i=0}^{k}$ of $B$ that satisfies property $A_{m,\,p}$.

We now state Schwarz's original definition of the $m$-sectional category, introduced under the name $m$-dimensional genus, and prove its equivalence with \Cref{def: secatm}.

\begin{definition}[{\cite[Definition 10]{schwarz1961genus}}]
The $m$-dimensional genus of a fibration $p \colon E \to B$, denoted by $\mathrm{genus}_m(p)$, is the least integer $k$ such that $B$ admits an open cover $\{U_i\}_{i=0}^{k}$, where each $U_i$ has the following property: for any $m$-dimensional CW complex $P$ and any map $\phi \colon P \to U_i$, the pullback of fibration $p$ along $\iota_{U_i} \circ \phi$ has a global section, where $\iota_{U_i} \colon U_i \to B$ is the inclusion map.
This can be depicted as follows:
\[
\begin{tikzcd}
E' \arrow[d, "p'"] \arrow[rr, "f"]    \arrow[drr, phantom, "\square"]     &                               & E \arrow[d, "p"] \\
P \arrow[r, "\phi"] \arrow[u, "s'", dashed, bend left=49] & U_i \arrow[r, "\iota_{U_i}", hook] & {B\,,}          
\end{tikzcd}
\]
where $p' \colon E' \to B$ is the pullback of $p$ along $\iota_{U_i} \circ \phi$.
In the case of non-existence of such open cover, $\mathrm{genus}_m(p) := \infty$.
\end{definition}

\begin{property}
Suppose $p \colon E \to B$ is a fibration.
An open subset $U$ of $B$ will be said to satisfy property $A_{m,\,p}'$ if, for every $m$-dimensional CW complex $P$ and every map $\phi \colon P \to U$, the pullback of $p$ along $\iota_U \circ \phi$ has a global section. 
An open cover $\mathcal{U}$ of $B$ will be said to satisfy property $A_{m,\,p}'$ if every element of $\mathcal{U}$ satisfies property $A_{m,\,p}'$.
\end{property}

\begin{proposition}
Suppose $p \colon E \to B$ is a fibration. 
Then an open subset of $B$ satisfies property $A_{m,\,p}$ if and only if it satisfies the property $A_{m,\,p}'$. 
In particular, 
$$
    \sctm(p) = \mathrm{genus}_m(p).
$$ 
\end{proposition}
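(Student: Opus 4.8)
The plan is to prove the two properties $A_{m,p}$ and $A_{m,p}'$ are equivalent for an arbitrary open subset $U \subseteq B$; the equality of invariants then follows immediately, since both $\sctm(p)$ and $\mathrm{genus}_m(p)$ are defined as the least $k$ admitting an open cover by $k+1$ sets each satisfying the respective property.

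First I would unwind the pullback. Given $\phi \colon P \to U$, let $p' \colon E' \to B$ denote the pullback of $p$ along $\iota_U \circ \phi$, so that $E' = \{(x, e) \in P \times E : \iota_U(\phi(x)) = p(e)\}$ with $p'$ the first projection and $f \colon E' \to E$ the second projection. The point to record is the standard adjunction: lifts $s \colon P \to E$ of $\iota_U \circ \phi$ through $p$ are in natural bijection with sections $s' \colon P \to E'$ of $p'$, via $s'(x) = (x, s(x))$ and conversely $s = f \circ s'$. This is a routine check that $p \circ s = \iota_U \circ \phi$ exactly when $s'$ as defined lands in $E'$ and satisfies $p' \circ s' = \id_P$.

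With this bijection in hand, the argument is immediate in both directions. If $U$ satisfies $A_{m,p}$, then for any $m$-dimensional CW complex $P$ and map $\phi \colon P \to U$ we obtain a lift $s \colon P \to E$, hence the section $s' = (\id_P, s) \colon P \to E'$ of $p'$, so $U$ satisfies $A_{m,p}'$. Conversely, if $U$ satisfies $A_{m,p}'$, a global section $s'$ of $p'$ yields the lift $s = f \circ s'$, so $U$ satisfies $A_{m,p}$. (One should note $p'$ is a fibration, being a pullback of a fibration, though this is not strictly needed for the statement — only for it to be sensible to speak of its sections in the relevant sense.)

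I do not anticipate a genuine obstacle here: this is essentially the observation that "admitting a lift" and "the pullback admitting a section" are two phrasings of the same universal property. The only care needed is to verify the bijection is genuinely between the two sets in question — i.e.\ that continuity is preserved in both directions (immediate, since $s'$ is built from continuous maps into a subspace, and $s = f \circ s'$ is a composite of continuous maps) and that the equations match up on the nose rather than merely up to homotopy. Once the equivalence of $A_{m,p}$ and $A_{m,p}'$ is established, the equality $\sctm(p) = \mathrm{genus}_m(p)$ is a formal consequence of the definitions.
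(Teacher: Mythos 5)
Your proposal is correct and follows essentially the same route as the paper: one direction composes the section with the induced map $f \colon E' \to E$ to get a lift, and the other uses the universal property of the pullback (which you make explicit via the standard model $E' \subseteq P \times E$) to turn a lift into a section. The only cosmetic difference is that you write the bijection concretely where the paper invokes the universal property abstractly; the content is identical.
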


\begin{proof}
Let $U$ be an open subset of $B$, and $P$ be an $m$-dimensional CW complex with a map $\phi \colon P \to U$. 
Suppose $p' \colon E' \to P$ is the pullback of $p$ along $\iota_U \circ \phi$.

If $U$ satisfies property $A_{m,\,p}'$, then there exists a global section $s' \colon P \to E'$ of $p'$.
\[
\begin{tikzcd}
E' \arrow[d, "p'"] \arrow[rr, "f"]                                           &                               & E \arrow[d, "p"] \\
P \arrow[r, "\phi"'] \arrow[u, "s'", bend left=49] \arrow[rru, "f \circ s'"] & U \arrow[r, "\iota_U"', hook] & B               
\end{tikzcd}
\]
Then $s:= f \circ s'$ is a map such that $p \circ s = \iota_U \circ \phi$, since
$$
    p \circ s 
        = p \circ f \circ s' 
        = \iota_U \circ \phi \circ p' \circ s' 
        = \iota_U \circ \phi.
$$

Conversely, suppose $U$ satisfies property $A_{m,\,p}$. 
Then there exists a map $s \colon P \to E$ such that $p \circ s = \iota_U \circ \phi$. 
Hence, by the universal property of pullbacks, there exists a unique map $s \colon P \to E'$ such that the following diagram commutes.
\[
\begin{tikzcd}
P \arrow[rdd, "\mathrm{Id}_P"', bend right] \arrow[rrrd, "s", bend left] \arrow[rd, "{\exists!\, s'}", dotted] &                                   &                                &                  \\
        & E' \arrow[d, "p'"] \arrow[rr, "f"] \arrow[drr, phantom, "\square"]  
        &   & E \arrow[d, "p"] \\
        & P \arrow[r, "\phi"]   & U \arrow[r, "\iota_{U}", hook] & B.               
\end{tikzcd}
\]
Then $p' \circ s' = \mathrm{Id}_P$ implies $U$ satisfies property $A_{m,\,p}'$.
\end{proof}

Next, we prove some basic homotopy theoretic properties of $m$-sectional category.
We first recall the notion of fibre-preserving homotopy equivalence in the sense of Grant, see \cite[p.~292]{Grant-ParaTC-group}.

Let $p \colon E \to B$ and $p' \colon E' \to B'$ be fibrations.
A \emph{fibre-preserving map} from $p$ to $p'$ is a pair of maps $f \colon E \to E'$ and $g \colon B \to B'$ satisfying $p' \circ f = g \circ p$, and is denoted by $(f,g) \colon p \to p'$.
A \emph{fibre-preserving homotopy} is a pair of maps $F \colon E \times I \to E'$ and $G \colon B \times I \to B'$
such that $p' \circ F = G \circ (p \times \mathrm{Id}_I)$, and is denoted by $(F,G)$.
The fibrations $p$ and $p'$ are said to be \emph{fibre-preserving homotopy equivalent} if there exist fibre-preserving maps $(f,g) \colon p \to p'$ and $(f',g') \colon p' \to p$ such that $(f \circ f',\, g \circ g')$ is fibre-preserving homotopic to $(\mathrm{Id}_{E'},\, \mathrm{Id}_{B'})$ and $(f' \circ f,\, g' \circ g)$ is fibre-preserving homotopic to $(\mathrm{Id}_{E},\, \mathrm{Id}_{B})$.

We now establish the fiber-preserving homotopy invariance of the $m$-sectional category.
\begin{proposition}\label{prop: fiber-preserving homotopy invariance}
Suppose $p \colon E \to B$ and $p' \colon E' \to B'$ are fibrations such that the following diagram commutes up to homotopy
\begin{equation}
\label{diag: pullback digram}
\begin{tikzcd}
E' \arrow[d, "p'"'] \arrow[r, "f'"] & E \arrow[d, "p"] \\
B' \arrow[r, "f"]                   & {B \,.}         
\end{tikzcd}
\end{equation}
\begin{enumerate}
\item If $f$ admits a right homotopy inverse, then 
$
        \sctm(p) \leq \sctm(p').
$

\item If $f$ is a homotopy equivalence and $f'$ admits a right homotopy inverse, then 
$
    \sctm(p) = \sctm(p').    
$
\end{enumerate}
In particular, $\sctm$ is a fibre-preserving homotopy invariant of a fibration.
\end{proposition}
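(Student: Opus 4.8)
The plan is to prove part (1) first and then obtain part (2) by applying (1) twice — once to the square in the statement and once to a "reversed" square built out of homotopy inverses — and finally deduce the fibre-preserving invariance from part (2). The only tool used beyond elementary manipulation of homotopies is the homotopy lifting property of the fibrations $p$ and $p'$.

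For part (1), I would assume $\sctm(p')=k<\infty$ (otherwise nothing is to prove) and fix an open cover $\{V_i\}_{i=0}^{k}$ of $B'$ satisfying property $A_{m,p'}$. Let $g\colon B\to B'$ be a right homotopy inverse of $f$, so $f\circ g\simeq\mathrm{Id}_B$, and put $U_i:=g^{-1}(V_i)$, an open cover of $B$. To see that each $U_i$ satisfies $A_{m,p}$, take an $m$-dimensional CW complex $P$ and a map $\phi\colon P\to U_i$; then $g\circ\iota_{U_i}\circ\phi$ corestricts to a map $\psi\colon P\to V_i$, and property $A_{m,p'}$ gives $s'\colon P\to E'$ with $p'\circ s'=\iota_{V_i}\circ\psi$. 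The composite $f'\circ s'\colon P\to E$ then satisfies
\[
p\circ(f'\circ s')\simeq f\circ p'\circ s'=f\circ\iota_{V_i}\circ\psi=f\circ g\circ\iota_{U_i}\circ\phi\simeq\iota_{U_i}\circ\phi,
\]
using the homotopy-commutativity of \eqref{diag: pullback digram} and then $f\circ g\simeq\mathrm{Id}_B$. Lifting this homotopy along the fibration $p$ with initial condition $f'\circ s'$ and evaluating at time $1$ produces a genuine lift $s\colon P\to E$ with $p\circ s=\iota_{U_i}\circ\phi$. Hence $\{U_i\}$ satisfies $A_{m,p}$ and $\sctm(p)\le k$. (This is the same rectification argument as in the pullback discussion above, applied one cover element at a time.)

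For part (2), applying (1) directly gives $\sctm(p)\le\sctm(p')$, since a homotopy equivalence certainly has a right homotopy inverse. For the reverse inequality I would choose a two-sided homotopy inverse $g\colon B\to B'$ of $f$ and a right homotopy inverse $g'\colon E\to E'$ of $f'$, so that $f\circ g\simeq\mathrm{Id}_B$, $g\circ f\simeq\mathrm{Id}_{B'}$ and $f'\circ g'\simeq\mathrm{Id}_E$. The crucial point — the one genuine computation in the argument — is the homotopy identity $p'\circ g'\simeq g\circ p$: from $p\circ f'\simeq f\circ p'$ one gets $p\simeq p\circ f'\circ g'\simeq f\circ p'\circ g'$, and composing on the left with $g$ gives $g\circ p\simeq g\circ f\circ p'\circ g'\simeq p'\circ g'$. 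Thus the square with top map $g'\colon E\to E'$ and bottom map $g\colon B\to B'$ commutes up to homotopy, and $f$ is a right homotopy inverse of $g$ (as $g\circ f\simeq\mathrm{Id}_{B'}$); applying part (1) to this square yields $\sctm(p')\le\sctm(p)$, and combining the two inequalities proves (2). For the final assertion, if $p$ and $p'$ are fibre-preserving homotopy equivalent, the defining data give a fibre-preserving map $(f',f)\colon p'\to p$ — so \eqref{diag: pullback digram} commutes (even strictly) — together with a fibre-preserving map in the opposite direction whose composites with $(f',f)$ are fibre-preserving homotopic to the identities; forgetting the fibre-preserving nature of these homotopies shows that $f$ is a homotopy equivalence and $f'$ has a right homotopy inverse, which is precisely the hypothesis of part (2).

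I expect the only real obstacle to be bookkeeping: keeping careful track of the directions of the various homotopies so that the homotopy lifting property of $p$ is invoked with the correct initial condition, and verifying the identity $p'\circ g'\simeq g\circ p$ cleanly. Neither requires any input beyond the homotopy lifting property and routine composition of homotopies.
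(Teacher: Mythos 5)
Your proposal is correct and follows essentially the same route as the paper: part (1) by pulling back a cover along the right homotopy inverse $g$, corestricting, composing with $f'$, and rectifying via the homotopy lifting property of $p$; part (2) by establishing $p'\circ g'\simeq g\circ p$ exactly as in the paper and applying (1) to the reversed square. The only difference is that you spell out the final deduction of fibre-preserving homotopy invariance, which the paper leaves implicit.
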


\begin{proof}
(1) Suppose $g \colon B \to B'$ is a right homotopy inverse of $f$, i.e., $f \circ g \simeq \id_{B}$.
Suppose $U$ is an open subset of $B'$ that satisfies property $A_{m,\,p'}$.
We claim that $V = g^{-1}(U)$ is an open subset of $B$ that satisfies property $A_{m,\,p}$.

Let $P$ be an $m$-dimensional CW complex with a map $\psi \colon P \to V$. 
We want to produce a map $s \colon P \to E$ such that $p \circ s = \iota_{V} \circ \psi$.
Define $\widetilde{\phi} := g \circ \iota_{V} \circ \psi$. 
Then $\widetilde{\phi}(P) \subset U$.
If $\phi \colon P \to U$ is the map induced from $\widetilde{\phi}$ by restricting the codomain, then there exists a map $s' \colon P \to E'$ such that $p' \circ s' = \iota_{U} \circ \phi$.
Let $r := f' \circ s'$.
Hence, we have a diagram
\[
\begin{tikzcd}
                                                  &                                & E' \arrow[r, "f'", shift left] \arrow[d, "p'"] & E \arrow[d, "p"]             &                                  &                                                      \\
P \arrow[r, "\phi"] \arrow[rru, "s'", shift left] & U \arrow[r, "\iota_{U}", hook] & B' \arrow[r, "f", shift left]                  & B \arrow[l, "g", shift left] & V \arrow[l, "\iota_{V}"', hook'] & P. \arrow[l, "\psi"'] \arrow[llu, "r"', shift right]
\end{tikzcd}
\]
Then
$
p \circ r
    = p \circ f' \circ s'
    \simeq f \circ p' \circ s '
    = f \circ \iota_{U} \circ \phi 
    = f \circ \widetilde{\phi}
    = f \circ g \circ \iota_{V} \circ \psi 
    \simeq \iota_{V} \circ \psi.
$
Let $H_t \colon P \to B$ be a homotopy such that $H_0= p \circ r$ and $H_1 = \iota_{V} \circ \psi$.
Since $p$ is a fibration, there exits a homotopy $\widetilde{H}_t \colon P \to E$ such that $p \circ \widetilde{H}_t = H_t$ and $\widetilde{H}_0 = r$. 
Thus, $s := \widetilde{H}_1$ is the required map, since $p \circ \widetilde{H}_1 = H_1 = \iota_{V} \circ \psi$.

(2) Suppose $g' \colon E \to E'$ is a right homotopy inverse of $f'$, and $g \colon B \to B'$ is a homotopy inverse of $f$. 
Then $p \circ f' \simeq f \circ p'$ implies
$$
    g \circ p
        \simeq g \circ (p \circ f') \circ g' 
        \simeq g \circ (f \circ p') \circ g'
        \simeq p' \circ g'.
$$
Applying (1) to the homotopy equivalence above, we get $\sctm(p') \leq \sctm(p)$. 
\end{proof}

\begin{proposition}
\label{prop: m-secat under pullback}
Suppose $p \colon E \to B$ is a fibration such that the diagram \eqref{diag: pullback digram} is a pullback.
Then $\sctm(p') \leq \sctm(p)$. 
\end{proposition}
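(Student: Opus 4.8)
The plan is to pull a witnessing cover of $B$ back along $f$ and invoke the universal property of the pullback; no homotopy lifting is needed because here the square commutes strictly. Note first that the direction of the inequality is opposite to \Cref{prop: fiber-preserving homotopy invariance}(1) (which would need a right homotopy inverse of $f$, not available here), so a direct argument is required rather than an appeal to that result.

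Concretely, suppose $\sctm(p) = k$ and fix an open cover $\{U_i\}_{i=0}^{k}$ of $B$ satisfying property $A_{m,\,p}$. Put $V_i := f^{-1}(U_i)$; since $f$ is continuous and the $U_i$ cover $B$, the sets $\{V_i\}_{i=0}^{k}$ form an open cover of $B'$. I claim each $V_i$ satisfies property $A_{m,\,p'}$, which gives $\sctm(p') \leq k$ at once. To see this, let $P$ be an $m$-dimensional CW complex and $\psi \colon P \to V_i$ a map. Since $f(V_i) \subseteq U_i$, restricting the codomain of $f \circ \iota_{V_i} \circ \psi$ produces a map $\phi \colon P \to U_i$ with $\iota_{U_i} \circ \phi = f \circ \iota_{V_i} \circ \psi$ as maps $P \to B$. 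As $U_i$ satisfies property $A_{m,\,p}$, there is $s \colon P \to E$ with $p \circ s = \iota_{U_i} \circ \phi = f \circ \iota_{V_i} \circ \psi$.

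Now use that the square \eqref{diag: pullback digram} is a pullback, so that $E'$ is canonically identified with $B' \times_B E$ and $p', f'$ are the two projections. The maps $\iota_{V_i} \circ \psi \colon P \to B'$ and $s \colon P \to E$ satisfy $f \circ (\iota_{V_i} \circ \psi) = p \circ s$, so by the universal property of the pullback there is a (unique) map $s' \colon P \to E'$ with $p' \circ s' = \iota_{V_i} \circ \psi$ and $f' \circ s' = s$. The first identity shows exactly that $V_i$ satisfies property $A_{m,\,p'}$, finishing the proof. The only point demanding any care is to restrict codomains so that $\phi$ genuinely lands in $U_i$, which is ensured by the choice $V_i = f^{-1}(U_i)$; there is no real obstacle otherwise.
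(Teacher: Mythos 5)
Your argument is correct and is essentially the same as the paper's: pull the witnessing cover back along $f$, restrict codomains to land in $U_i$, apply property $A_{m,\,p}$ to obtain $s$, and then use the strict universal property of the pullback to produce $s'$ with $p' \circ s' = \iota_{V_i} \circ \psi$. No homotopy lifting is needed, exactly as you observe.
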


\begin{proof}
Suppose $U$ is an open subset of $B$ that satisfies property $A_{m,\,p}$.
Let $V = f^{-1}(U)$ and $\widetilde{f} \colon V \to U$ be the restriction of $f$.
Suppose $\phi \colon P \to V$ is a map, where $P$ is an $m$-dimensional CW complex.
Then there exists a map $s \colon P \to E$ such that $p \circ s = \iota_U \circ (\widetilde{f} \circ \phi)$.
As $\iota_U \circ \widetilde{f} = f \circ \iota_V$, we get
$p \circ s = f \circ \iota_V \circ \phi$.
Thus, by the universal property of pullbacks, there exists a unique map $s' \colon P \to E'$ such that the following diagram
\begin{equation}
\label{diag: universal diag for pullback}
\begin{tikzcd}
&       & E' \arrow[d, "p'"'] \arrow[r, "f'"] & E \arrow[d, "p"] \\
P \arrow[r, "\phi"] \arrow[rrru, "s", bend left=49] \arrow[rru, "s'", dotted, shift left] & V \arrow[r, "\iota_V", hook] & B' \arrow[r, "f"]                   & B               
\end{tikzcd}
\end{equation}
is commutative.
Hence, $p' \circ s' = \iota_V \circ \phi$, i.e., $V$ satisfies the property $A_{m,\,p'}$.
\end{proof} 

The following result establishes the monotonicity property of the $m$-sectional category.
\begin{proposition}
\label{prop: monotonicity of m-secat}
Suppose $p \colon E \to B$ is a fibration. 
Then
\begin{enumerate}
\item an open subset of $B$ satisfies property $A_{m,\,p}$ if and only if it satisfies the property $A_{n,p}$ for all $n \leq m$.
In particular, 
$
    \sct_n(p) \leq \sctm(p)
$ 
for all $n \leq m$.

\item $\sctm(p) \leq \sct(p)$ for all $m$.
\end{enumerate}
\end{proposition}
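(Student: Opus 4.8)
The plan is to establish both parts by unwinding the definition of property $A_{m,p}$ directly; since $\sctm$ is built from $A_{m,p}$ in exactly the way $\sct$ is built from ordinary local sections, each statement reduces to a short diagram chase, with no homotopy-lifting input required.

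For part (1), one implication is immediate: if an open set $U\subseteq B$ satisfies $A_{n,p}$ for every $n\le m$, then the case $n=m$ shows it satisfies $A_{m,p}$. For the converse, I would suppose $U$ satisfies $A_{m,p}$, fix $n\le m$, and let $\psi\colon Q\to U$ be a map from an $n$-dimensional CW complex. Since $Q$ is in particular an $m$-dimensional CW complex (carrying no cells in dimensions $n+1,\dots,m$), property $A_{m,p}$ applied to $\psi$ yields a map $s\colon Q\to E$ with $p\circ s=\iota_U\circ\psi$, so $U$ satisfies $A_{n,p}$. Should one prefer to read ``$m$-dimensional'' strictly, the same conclusion follows by fixing a $0$-cell $q_0\in Q$, passing to the $m$-dimensional complex $P=Q\vee S^m$, extending $\psi$ to $\phi\colon P\to U$ that sends $S^m$ to $\psi(q_0)$, applying $A_{m,p}$ to $\phi$, and restricting the resulting section to $Q$. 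The ``in particular'' clause is then formal: an open cover $\{U_i\}_{i=0}^{k}$ of $B$ satisfying $A_{m,p}$ also satisfies $A_{n,p}$ for all $n\le m$ by the above, whence $\sct_n(p)\le k$ for every $k$ with $\sctm(p)\le k$, that is $\sct_n(p)\le\sctm(p)$.

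For part (2), I would take an open cover $\{U_i\}_{i=0}^{k}$ of $B$ realizing $\sct(p)=k$, so that each $U_i$ carries a section $\sigma_i\colon U_i\to E$ of $p$, that is, $p\circ\sigma_i=\iota_{U_i}$. Given any $m$-dimensional CW complex $P$ and any map $\phi\colon P\to U_i$, the composite $\sigma_i\circ\phi\colon P\to E$ satisfies $p\circ(\sigma_i\circ\phi)=\iota_{U_i}\circ\phi$, so $U_i$ satisfies $A_{m,p}$; hence $\{U_i\}_{i=0}^{k}$ satisfies $A_{m,p}$ and $\sctm(p)\le k=\sct(p)$. The only point requiring a moment's care anywhere in the argument is the converse implication in part (1) — whether a CW complex of dimension strictly below $m$ is an admissible test object for $A_{m,p}$ — and this is settled either by the standard convention that ``$m$-dimensional'' means ``of dimension at most $m$'' or, failing that, by the wedge-with-$S^m$ device above; every other step is a one-line chase of the triangle defining $A_{m,p}$.
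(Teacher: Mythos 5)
Your proposal is correct and follows essentially the same route as the paper: part (1) is handled by the wedge $P = Q \vee S^m$ construction (extending $\psi$ over $S^m$ by the constant map at $\psi(q_0)$ and restricting the resulting lift back to $Q$), and part (2) by composing a local section with $\phi$. The only cosmetic difference is that you also note the argument is unnecessary under the convention that ``$m$-dimensional'' means ``of dimension at most $m$,'' whereas the paper goes directly to the wedge device.
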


\begin{proof}
(1) Suppose $U$ is an open subset of $B$ that satisfies property $A_{m,\,p}$.
Let $\psi \colon Q \to U$ be a map, where $Q$ is a $n$-dimensional CW complex and $n \leq m$.
If $q_0 \in Q$ is a point, then
$$
    P := Q \vee S^m = \frac{Q \coprod D^m}{q_0 \sim z},  \quad \text{for all }z \in S^{m-1}
$$
is a $m$-dimensional CW complex, and $\psi$ extends to a map $\phi \colon P \to U$, given by $\phi([q]) = \psi(q)$ and $\phi([x]) = \psi(q_0)$ for $q \in Q$ and $x \in D^m$.
Thus, there exists a map $s \colon P \to E$ such that $p \circ s = \iota_{U} \circ \phi$.
Hence, $\left.s\right|_{Q} \colon Q \to E$ is the map such that $p \circ \left.s\right|_{Q} = \iota_{U} \circ \left.\phi\right|_{Q} = \iota_{U} \circ \psi$.

(2) Suppose $U$ is an open subset of $B$ with a section $s$ of $p$ over $U$.
Suppose $P$ is a $m$-dimensional CW complex with a map $\phi \colon P \to U$.
    Then we have the following commutative diagram
    \[
\begin{tikzcd}
                                                           &                                          & E \arrow[d, "p"] \\
P \arrow[r, "\phi"] \arrow[rru, "s \circ \phi", bend left] & U \arrow[r, "i_U", hook] \arrow[ru, "s"] & {B \, .}        
\end{tikzcd}
\]
Hence, the map $s' := s \circ \phi$ satisfies $p \circ s' = i_U\circ \phi$.
\end{proof}

\subsection{Bounds on $m$-sectional category}
We establish a cohomological lower bound for the $m$-sectional category of a fibration. 
This bound is obtained by adapting classical cohomological techniques to the $m$-dimensional setting.
\begin{proposition}
\label{prop: cohomological lower bound of m-secat}
Suppose $p \colon E \to B$ is a fibration. 
If there exist cohomology classes $u_0, u_1,\ldots, u_k \in H^*(B;R)$ (for any commutative ring $R$) with 
$$
    p^*(u_0) = p^{*}(u_1) = \cdots = p^{*}(u_k) = 0
        \quad \text{ and } \quad
    u_0 \smile u_1 \smile \cdots \smile u_k \neq 0,
$$
then 
$
    \sctm(p) \geq k+1,
$
where $m = \max\{\deg(u_i) \mid i=0,1,\ldots,k\}$.
\end{proposition}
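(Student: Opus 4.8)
The plan is to argue by contradiction, adapting the classical cohomological lower bound for $\sct$ due to Schwarz. The one new ingredient required is a detection lemma showing that property $A_{m,\,p}$ forces the relevant classes to restrict to zero in cohomological degrees $\le m$, playing exactly the role that the existence of a local section plays in the classical argument.

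First I would prove the following: if an open set $U \subseteq B$ satisfies property $A_{m,\,p}$ and $u \in H^j(B;R)$ with $j \le m$ satisfies $p^*(u) = 0$, then $\iota_U^*(u) = 0$. To obtain this, choose a CW approximation $\rho \colon P' \to U$; being a weak equivalence, it induces an isomorphism on singular cohomology with $R$-coefficients. Restricting to the $j$-skeleton $P := (P')^{(j)}$ --- a CW complex of dimension $\le j \le m$ --- the inclusion induces a monomorphism $H^j(P';R) \hookrightarrow H^j(P;R)$, since cells of dimension $> j$ do not affect $H^j$; hence $\phi := \rho|_P \colon P \to U$ induces an injection $\phi^*$ on $H^j$. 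By \Cref{prop: monotonicity of m-secat}, $U$ also satisfies property $A_{j,\,p}$, so $\phi$ lifts to a map $s \colon P \to E$ with $p \circ s = \iota_U \circ \phi$; then $\phi^*(\iota_U^*(u)) = s^*(p^*(u)) = 0$, and injectivity of $\phi^*$ forces $\iota_U^*(u) = 0$.

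Then I would run the standard argument. Assume $\sctm(p) \le k$, so there is an open cover $\{U_i\}_{i=0}^{k}$ of $B$ satisfying property $A_{m,\,p}$. Since $\deg(u_i) \le m$ and $p^*(u_i) = 0$, the lemma above gives $\iota_{U_i}^*(u_i) = 0$, so by the long exact sequence of the pair $(B, U_i)$ each $u_i$ lifts to a class $\bar u_i \in H^{\deg(u_i)}(B, U_i; R)$. Taking relative cup products, $\bar u_0 \smile \cdots \smile \bar u_k$ lies in $H^*(B,\, U_0 \cup \cdots \cup U_k; R) = H^*(B, B; R) = 0$, and it maps to $u_0 \smile \cdots \smile u_k$ under the natural map $H^*(B, \bigcup_i U_i; R) \to H^*(B; R)$. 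Hence $u_0 \smile \cdots \smile u_k = 0$, contradicting the hypothesis, so $\sctm(p) \ge k+1$.

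The only delicate point is the detection lemma: unlike in the classical case, an open set with property $A_{m,\,p}$ need not admit a section of $p$, so one cannot read off $\iota_{U_i}^*(u_i) = 0$ directly. The CW-approximation-and-skeleton device is what repairs this, and it matters there both that $U$ is treated as an arbitrary topological space (so one uses a weak equivalence from a CW complex rather than assuming $U$ is itself CW) and that the truncation $(P')^{(j)}$ stays within dimension $m$, so that property $A_{m,\,p}$ --- via monotonicity --- applies. Beyond that, the argument is Schwarz's classical proof essentially verbatim.
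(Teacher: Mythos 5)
Your proposal is correct and follows essentially the same route as the paper's proof: both reduce the key point to showing $\iota_{U_i}^*(u_i)=0$ via a CW approximation of $U_i$, restriction to a skeleton of dimension at most $m$ (you use the $\deg(u_i)$-skeleton, the paper the $m$-skeleton), injectivity of the restriction map on $H^{j}$ for $j$ at most the skeleton dimension, and the lift supplied by property $A_{m,\,p}$ together with monotonicity; the relative cup product argument is then identical. The only cosmetic difference is that you isolate this as a standalone detection lemma, whereas the paper argues by contradiction inside the main proof, splitting into cases according to whether the CW approximation already has dimension at most $m$.
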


\begin{proof}
Suppose that $\sctm(p)\leq k$. 
Let $\{U_0,U_1,\dots,U_k\}$ be an open cover of $B$ that satisfies property $A_{m,\,p}$. 

We claim that $\iota_0^*(u_0) = \iota_1^*(u_1) = \cdots = \iota_k^*(u_k) = 0$, where $\iota_i \colon U_i \hookrightarrow B$ denotes the inclusion map. 
Suppose $\iota_i^{*}(u_i) \neq 0$ for some $i$.
By the cellular approximation theorem, there exists a CW complex $W_i$ and a weak homotopy equivalence $h_i \colon W_i \to U_i$.
Then $h_i^{*}(\iota_i^*(u_i)) \neq 0$. 

\noindent
Case (1): Suppose $\dim(W_i) \leq m$. 
Then, by \Cref{prop: monotonicity of m-secat} (1), there exists a map $s_i \colon W_i \to E$ such that $p \circ s_i = \iota_i \circ h_i$.
Hence, $h_i^*(\iota_i^{*}(u_i)) = s_i^{*}(p^*(u_i)) = s_i^{*}(0) = 0$ gives a contradiction.

\noindent
Case (2): Suppose $\dim(W_i) > m$.
Let $(W_{i})_m$ denote the $m$-skeleton of $W_i$ and $\iota \colon (W_i)_m \hookrightarrow W_i$ denotes the inclusion map.
Then $\iota^* \colon H^{j}(W_i;R) \to H^{j}((W_i)_m;R)$ is injective for $j \leq m$.
Hence, $m \geq \deg(u_i)$ implies $\iota^*(h_i^{*}(\iota_i^*(u_i))) \neq 0$. 
Using property $A_{m,\,p}$, there exists a map $t_i \colon (W_i)_m \to E$ such that $p \circ t_i = \iota_i \circ h \circ \iota$.  
Hence, $\iota^*(h_i^*(\iota_i^{*}(u_i))) = t_i^{*}(p^*(u_i)) = t_i^{*}(0) = 0$ gives a contradiction.

Using the long exact sequence in cohomology for the pair $(B,U_i)$, we see that there exists a relative cohomology class $v_i \in H^*(B, U_i; R)$ such that $j_i^*(v_i) = u_i$, where $j_i \colon B \hookrightarrow (B,U_i)$ is the inclusion map.
Thus, we obtain
$$
    v_0 \smile v_1 \smile \cdots \smile v_k 
        \in H^*(B, \cup^k_{i=0} U_i; R) = H^*(B,B; R)= 0.
$$
Moreover, by the naturality of the cup product, we have 
$$
    u_0 \smile u_1 \smile \cdots \smile u_k 
        = j^*(v_0 \smile v_1 \smile \cdots \smile v_k) = 0,
$$ 
where $j \colon B \hookrightarrow (B,B)$ is the inclusion map.
This contradicts our assumption.
\end{proof}

Next, we state two important results from \cite{schwarz1961genus}, which relate $\sct_m$ to the classical sectional category. 
These results will be used in the upcoming sections to establish relationships between various induced $m$-dimensional invariants.

\begin{proposition}[{\cite[Proposition 17 and Lemma 3]{schwarz1961genus}}]
\label{prop: m-secat = secat if F is ''apsherical''}
Suppose $F \hookrightarrow E \xrightarrow{p} B$ is a fibration, where $B$ is a CW complex. 
If $\pi_k(F)=0$ for all $k \geq m$, then
$$
    \sctm(p) = \sct(p).
$$
\end{proposition}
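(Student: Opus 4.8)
The plan is to prove the nontrivial inequality $\sct(p) \leq \sctm(p)$, since the reverse inequality $\sctm(p) \leq \sct(p)$ is already established in \Cref{prop: monotonicity of m-secat}(2). So suppose $\sctm(p) \leq k$ and fix an open cover $\{U_i\}_{i=0}^k$ of $B$ satisfying property $A_{m,\,p}$; I want to show each $U_i$ admits an honest section of $p$, which would give $\sct(p) \leq k$. The key point is that property $A_{m,\,p}$ says the pullback of $p$ over every $m$-dimensional CW complex mapping into $U_i$ has a section, and I must upgrade this to a section over all of $U_i$. The bridge is obstruction theory: since $B$ is a CW complex and $U_i$ is open in $B$, I first replace $U_i$ (up to weak equivalence, via cellular approximation as in the proof of \Cref{prop: cohomological lower bound of m-secat}) by a CW complex $W_i$ with a weak homotopy equivalence $h_i \colon W_i \to U_i$, and it suffices to build a section of the pulled-back fibration $h_i^*p$ over $W_i$.

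Next I would run the standard obstruction-theoretic argument for sections of a fibration with fiber $F$. Over the $m$-skeleton $(W_i)_m$ a section exists: indeed $(W_i)_m$ is an $m$-dimensional CW complex mapping into $U_i$ via $h_i \circ \iota$, so property $A_{m,\,p}$ furnishes a map into $E$ covering it, which by the universal property of the pullback is exactly a section of $h_i^*p$ over $(W_i)_m$. Now I extend skeleton by skeleton: given a section over $(W_i)_n$ with $n \geq m$, the obstruction to extending over $(W_i)_{n+1}$ lies in $H^{n+1}(W_i,(W_i)_n; \pi_n(F))$ (with local coefficients), and this vanishes because $\pi_n(F) = 0$ for $n \geq m$. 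Hence the section extends over every skeleton and, by passing to the colimit (sections over skeleta patch compatibly), over all of $W_i$. Composing with $f' \colon h_i^*E \to E$ and using that $h_i$ is a weak equivalence — more precisely, transporting the section back along the pullback square and invoking that a section over $W_i$ gives, after the homotopy lifting property of the fibration $p$, a section over $U_i$ — yields a genuine section of $p$ over $U_i$.

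The main obstacle is the last transfer step: I have a section of the pullback fibration over the CW-approximation $W_i$, but I need a section of $p$ itself over the open set $U_i$, which need not be a CW complex. The clean way around this is to note that $\sct(p)$ is a fibrewise homotopy invariant and that one can either (i) work directly with $U_i$ using Milnor's theorem that $U_i$ has the homotopy type of a CW complex when $B$ does, or (ii) invoke \Cref{prop: m-secat under pullback} together with the $A_{m,\,p}'$ formulation: property $A_{m,\,p}'$ over $U_i$ plus the vanishing of higher homotopy of $F$ forces the restricted fibration $p|_{p^{-1}(U_i)} \to U_i$ to admit a section by the same skeletal obstruction argument applied after CW-approximating $U_i$ and using that the obstruction classes are natural and pull back isomorphically in the relevant degrees. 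I would spell out option (ii), since it keeps everything inside the pullback framework already set up in the excerpt, and cite the relevant statements (Proposition 17 and Lemma 3 of \cite{schwarz1961genus}) for the precise obstruction-theoretic input.
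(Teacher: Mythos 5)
The paper does not prove this proposition at all: it is imported verbatim from Schwarz (Proposition~17 and Lemma~3 of \cite{schwarz1961genus}) and used as a black box, so there is no in-paper argument to compare against. Judged on its own terms, your sketch is the standard obstruction-theoretic proof and is essentially sound: the inequality $\sctm(p)\le\sct(p)$ is \Cref{prop: monotonicity of m-secat}(2); property $A_{m,\,p}$ applied to $h_i\circ\iota\colon (W_i)_m\to U_i$ gives, via the universal property of the pullback, a section of $h_i^*p$ over the $m$-skeleton; and since the obstruction cochain to extending a section over an $(n+1)$-cell takes values in $\pi_n(F)=0$ for $n\ge m$, the section extends skeleton by skeleton (with no modification needed, as the coefficient groups vanish identically) and hence over all of $W_i$ by the weak topology. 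The one point I would press you on is the transfer step, which you correctly flag as the main obstacle but then resolve in a slightly backwards way: your option~(ii) (``obstruction classes pull back isomorphically'') does not by itself convert a section over the CW approximation $W_i$ into a section of $p$ over the open set $U_i$, because a weak homotopy equivalence has no homotopy inverse to compose with. What actually closes the gap is option~(i): since $U_i$ is an open subset of a CW complex, it has the homotopy type of a CW complex (Milnor), so $h_i$ may be taken to be a genuine homotopy equivalence with inverse $h_i'$; then $p\circ(f'\circ s\circ h_i')\simeq \iota_{U_i}$, and the homotopy lifting property of $p$ straightens this to an honest section over $U_i$ --- exactly the deformation already used in the proof of \Cref{prop: fiber-preserving homotopy invariance}. (Schwarz's own route via Lemma~3 instead replaces the open cover by a cover adapted to the CW structure, which achieves the same end; either way, some input beyond weak equivalence is required, and you should commit to one.) With that correction, the argument is complete.
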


\begin{proposition}[{\cite[Proposition 19]{schwarz1961genus}}]
\label{prop: secat leq m-secat + [k/m+1]}
Suppose $p \colon E \to B$ is a fibration, where $B$ is a $k$-dimensional CW complex.
Then 
$$
    \sct(p) \leq \sctm(p) + \left[\frac{k}{m+1}\right], 
        \quad \text{and} \quad 
    \sct(p) \leq \max\{\sct_{k-1}(p),2\},
$$
where $[w]$ denotes the greatest integer less than or equal to $w$.
\end{proposition}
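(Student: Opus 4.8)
The plan is to extract both estimates from Schwarz's treatment of the Svarc genus, which recasts $\sctm$ as a lifting problem for iterated fibrewise joins and then plays it off against the skeletal filtration of the base.

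The first step is to recall the join-theoretic dictionary. For a fibration $p\colon E\to B$ one has $\sct(p)\le r$ precisely when the $(r+1)$-fold fibrewise join $p^{\ast(r+1)}\colon E^{\ast(r+1)}_{B}\to B$ admits a global section, and --- in the same spirit, using the equivalence of definitions established above --- $\sctm(p)\le r$ precisely when $p^{\ast(r+1)}$ admits a lift of every map from an $m$-dimensional CW complex into $B$. Thus, writing $s:=\sctm(p)$ and $q:=p^{\ast(s+1)}$, the hypothesis becomes that $q$ lifts every map from an $m$-complex; in particular $q$ has a section over any open subset of $B$ that deformation retracts onto a CW complex of dimension at most $m$ (lift the identity of the retract, composed into $B$, and push the resulting section along the retraction by the homotopy lifting property of $q$). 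Since a $k$-dimensional CW complex $B$ is filtered by its skeleta, one also has the join identity $p^{\ast(a+b)}=p^{\ast a}\ast_{B}p^{\ast b}$ at one's disposal, which converts open covers of $B$ by ``$p^{\ast(\cdot)}$-sectionable'' pieces into global sections of higher joins.

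For the first inequality I would cover $B$ by an open neighbourhood of the $m$-skeleton $B^{(m)}$ --- over which $q$ is sectioned, by the previous paragraph --- together with open neighbourhoods of the successive skeletal bands of width $m+1$, namely of the families of cells in dimensions $(m,2m+1],\,(2m+1,3m+2],\dots$, of which there are exactly $[k/(m+1)]$. Combining the section of $q=p^{\ast(s+1)}$ over the first piece with appropriate lifts over the bands via the join identity should yield a section of $p^{\ast(s+1+[k/(m+1)])}$, hence $\sct(p)\le s+[k/(m+1)]$. The delicate point, and the one I expect to be the main obstacle, is that an individual skeletal band --- although its cells occupy only $m+1$ consecutive dimensions --- is \emph{not} in general homotopy equivalent to a complex of dimension $\le m$ (for instance a band containing a mod-$2$ Moore space in degree $2$ is not dominated by a $1$-complex), so $p$ need not section over it, and the crude subadditivity $\sct(p)\le\sct(p|_{A})+\sct(p|_{A'})+1$ only produces a multiplicative bound. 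Obtaining the \emph{additive} estimate requires Schwarz's finer accounting of how sections of fibrewise joins propagate along such a cover; this is precisely the combinatorial heart of \cite[Proposition 19]{schwarz1961genus}.

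The second inequality is the ``top-skeleton'' refinement: here one splits $B$ into an open neighbourhood $U$ of $B^{(k-1)}$ deformation retracting onto $B^{(k-1)}$, so that $\sct(p|_{U})\le\sct_{k-1}(p|_{B^{(k-1)}})\le\sct_{k-1}(p)$, and an open neighbourhood $V$ of the union of the open top cells, which deformation retracts onto a disjoint union of contractible sets, so that $p$ sections honestly over $V$ and $\sct(p|_{V})=0$. Naive subadditivity gives only $\sct(p)\le\sct_{k-1}(p)+1$; the improvement to $\max\{\sct_{k-1}(p),2\}$, i.e., the disappearance of the extra unit once $\sct_{k-1}(p)\ge 2$, comes from a Berstein--Ganea-type argument that uses the fact that the section over $V$ is a genuine section of $p$ (Svarc genus $0$) and that the overlap $U\cap V$ is homotopy equivalent to a disjoint union of $(k-1)$-spheres, allowing the two sections to be reconciled on the overlap after a single fibrewise join. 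As with the first part, keeping the bookkeeping additive rather than multiplicative is the crux, and we refer to \cite{schwarz1961genus} for the details.
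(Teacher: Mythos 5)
First, a point of orientation: the paper does not prove this proposition at all. It is imported verbatim as \cite[Proposition 19]{schwarz1961genus}, with the citation standing in for the proof, and it is then only \emph{used} (in \Cref{cor: secat leq m-secat + [hdim/m+1]} and its analogues). Since your write-up also ends by referring the decisive steps to the same reference, you are in effect doing what the paper does, only at greater length and with the appearance of an argument. Read as an attempted proof, however, the text has genuine gaps --- most of which you candidly flag yourself, but flagging a gap does not close it.

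Concretely: (i) the join-theoretic dictionary for $\sctm$ (that $\sctm(p)\le r$ forces $p^{*(r+1)}$ to admit a section over a neighbourhood of $B^{(m)}$) is asserted ``in the same spirit'' as the equivalence $\sctm(p)=\mathrm{genus}_m(p)$ proved in the paper, but that equivalence is a much weaker statement; what you need is the join characterization of $\sct$ applied to the pullback over an $m$-complex, together with the fact that an open subset of an $m$-dimensional CW complex is homotopy equivalent to a CW complex of dimension at most $m$ --- neither is supplied. (ii) For the first inequality, the proposed cover by a neighbourhood of $B^{(m)}$ plus $[k/(m+1)]$ skeletal bands does not, as written, consist of sets over which anything is sectioned: as you yourself observe, a band of open cells spanning $m+1$ consecutive dimensions is neither dominated by an $m$-complex (so $p^{*(s+1)}$ need not lift over it) nor a disjoint union of contractible sets (cells of adjacent dimensions inside one band can be incident, so $p$ itself need not section over it). Since the additive bound is obtained precisely by sectioning $p^{*(s+1)}$ over the first piece and $p$ over each remaining piece and then forming the fibrewise join, the entire quantitative content of the inequality sits in the step you defer. (iii) The same is true of the second inequality: the naive decomposition yields only $\sct(p)\le\sct_{k-1}(p)+1$, and the improvement to $\max\{\sct_{k-1}(p),2\}$ --- the whole point of that statement --- is again left to the reference. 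In short, the outline correctly locates where Schwarz's proof lives, but none of the asserted inequalities is actually established by the text; if the intent is to rely on Schwarz, the paper's one-line citation is the cleaner and more honest option.
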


\begin{corollary}
\label{cor: secat leq m-secat + [hdim/m+1]}
Suppose $p \colon E \to B$ is a fibration, where $B$ is a finite dimensional CW complex.
Then 
$$
    \sct(p) \leq \sctm(p) + \left[\frac{\hdim(B)}{m+1}\right].
$$
where $[w]$ denotes the greatest integer less than or equal to $w$.   
In particular, 
$$
    \sctm(p) = \sct(p)
$$ 
for all $m\geq \hdim(B).$
\end{corollary}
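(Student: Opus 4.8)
The strategy is to deduce the corollary from \Cref{prop: secat leq m-secat + [k/m+1]} by replacing $B$ with a CW model of minimal dimension in its homotopy type. Set $d=\hdim(B)$; by the definition of homotopical dimension there is a $d$-dimensional CW complex $K$ together with a homotopy equivalence $g\colon K\to B$. Pulling $p$ back along $g$ yields a fibration $p_K\colon E_K\to K$ sitting in a strictly commutative pullback square over $g$ (which in particular commutes up to homotopy), and the plan is to show first that this base change alters neither $\sctm$ nor $\sct$, and then to apply Schwarz's bound to the finite-dimensional base $K$.

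For the base-change invariance: since the square is a pullback, \Cref{prop: m-secat under pullback} gives $\sctm(p_K)\le\sctm(p)$; since $g$ is a homotopy equivalence it admits a right homotopy inverse, so \Cref{prop: fiber-preserving homotopy invariance}(1) gives the reverse inequality $\sctm(p)\le\sctm(p_K)$. Hence $\sctm(p)=\sctm(p_K)$ for every $m$ (with no dimension hypothesis needed at this stage). To transfer this to the classical sectional category I would exploit that $B$, and therefore $K$ (as $\dim K=d\le\dim B$), is finite-dimensional: applying \Cref{prop: secat leq m-secat + [k/m+1]} with $k=\dim B$ and any index $m'\ge\dim B$, together with \Cref{prop: monotonicity of m-secat}(2), shows $\sct(p)=\sct_{m'}(p)$ and likewise $\sct(p_K)=\sct_{m'}(p_K)$; combined with the equality $\sct_{m'}(p)=\sct_{m'}(p_K)$ just established, this gives $\sct(p)=\sct(p_K)$.

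With these identifications in hand the corollary follows quickly. The complex $K$ is literally $d$-dimensional, so \Cref{prop: secat leq m-secat + [k/m+1]} applied to $p_K$ yields $\sct(p_K)\le\sctm(p_K)+[d/(m+1)]$, and chaining with the two equalities gives $\sct(p)\le\sctm(p)+[\hdim(B)/(m+1)]$. For the final assertion, $m\ge\hdim(B)$ forces $\hdim(B)/(m+1)<1$, hence $[\hdim(B)/(m+1)]=0$ and $\sct(p)\le\sctm(p)$; together with $\sctm(p)\le\sct(p)$ from \Cref{prop: monotonicity of m-secat}(2) this gives $\sctm(p)=\sct(p)$. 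The only step carrying real content is the base-change invariance in the second paragraph; the rest is arithmetic and a direct appeal to Schwarz's Proposition~19, so I expect no serious obstacle beyond being careful that every invocation of \Cref{prop: fiber-preserving homotopy invariance} and \Cref{prop: secat leq m-secat + [k/m+1]} is made with the correct orientation of the square and the correct dimension bound.
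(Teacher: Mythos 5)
Your proof is correct and follows essentially the same route as the paper: replace $B$ by a CW model of dimension $\hdim(B)$, pull back $p$ along the homotopy equivalence, apply \Cref{prop: secat leq m-secat + [k/m+1]} to the new base, and transfer the conclusion back via \Cref{prop: m-secat under pullback} and \Cref{prop: fiber-preserving homotopy invariance}. The only point where you go beyond the paper is in justifying $\sct(p)=\sct(p_K)$ --- the paper simply asserts this invariance of the classical sectional category, whereas you derive it inside the paper's framework by taking $m'\geq\dim B$ in \Cref{prop: secat leq m-secat + [k/m+1]} together with \Cref{prop: monotonicity of m-secat}; this extra step is valid and makes the argument more self-contained.
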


\begin{proof}
Let $B'$ be a finite dimensional CW complex and $f \colon B' \to B$ be a homotopy equivalence.
Suppose $p' \colon E' \to B'$ is the pullback of $p$ along $f$, see diagram \eqref{diag: pullback digram}.
Hence, by \Cref{prop: secat leq m-secat + [k/m+1]}, it follows that 
$$
    \sct(p') \leq \sctm(p') + \left[\frac{\dim(B')}{m+1}\right].
$$
Since $\sctm(p') = \sctm(p)$, by \Cref{prop: fiber-preserving homotopy invariance} (1) and \Cref{prop: m-secat under pullback}, and $\sct(p) = \sct(p')$, the desired result follows.
\end{proof}

\subsection{Normal spaces and product inequality}
In this subsection, we establish the product inequality for the $m$-sectional category. 
A key step in the proof is to verify that Property $A_{m,p}$ is preserved under taking open subsets and disjoint unions, which we prove below.

\begin{proposition}
\label{prop: inheritance of m-secat}
Property $A_{m,\,p}$ is inherited by open subsets and finite disjoint unions.
\end{proposition}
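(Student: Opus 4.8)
The plan is to prove the two closure properties separately, each by an essentially elementary argument using the defining lifting property of $A_{m,p}$.

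First I would handle open subsets. Suppose $U \subseteq B$ satisfies property $A_{m,p}$, and let $W \subseteq U$ be open. Given an $m$-dimensional CW complex $P$ and a map $\phi \colon P \to W$, I compose with the inclusion $\iota \colon W \hookrightarrow U$ to obtain $\iota \circ \phi \colon P \to U$. Applying property $A_{m,p}$ to this map, there is a lift $s \colon P \to E$ with $p \circ s = \iota_U \circ (\iota \circ \phi) = \iota_W \circ \phi$, where I use $\iota_U \circ \iota = \iota_W \colon W \hookrightarrow B$. Hence $s$ is the required lift for $\phi$, and $W$ satisfies $A_{m,p}$. This is immediate from functoriality of inclusions.

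Next I would handle finite disjoint unions. Suppose $U_1, \ldots, U_r \subseteq B$ are pairwise disjoint open subsets each satisfying $A_{m,p}$, and put $U = U_1 \sqcup \cdots \sqcup U_r$ (disjoint as an open subset of $B$). Let $P$ be an $m$-dimensional CW complex with a map $\phi \colon P \to U$. The key point is that any CW complex, in particular $P$, decomposes as a disjoint union of its connected components, each of which is a subcomplex, hence again an $m$-dimensional CW complex. Each connected component $P_\alpha$ of $P$ has connected image under $\phi$, so $\phi(P_\alpha) \subseteq U_{j(\alpha)}$ for a uniquely determined index $j(\alpha)$; write $\phi_\alpha \colon P_\alpha \to U_{j(\alpha)}$ for the corestriction. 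Applying property $A_{m,p}$ for $U_{j(\alpha)}$ to $\phi_\alpha$ yields a lift $s_\alpha \colon P_\alpha \to E$ with $p \circ s_\alpha = \iota_{U_{j(\alpha)}} \circ \phi_\alpha$. Since $P = \coprod_\alpha P_\alpha$, the maps $s_\alpha$ assemble to a single continuous map $s \colon P \to E$, and on each component $p \circ s|_{P_\alpha} = \iota_U \circ \phi|_{P_\alpha}$, so $p \circ s = \iota_U \circ \phi$. Thus $U$ satisfies $A_{m,p}$.

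The only mild subtlety — and the step I would flag as needing a sentence of care rather than being a genuine obstacle — is the claim that a continuous map from an $m$-dimensional CW complex to a disjoint union of opens factors componentwise through the pieces, which rests on the fact that connected components of a CW complex are open-and-closed subcomplexes; equivalently, the image of a connected space is connected and $\{U_j\}$ is an open partition of $U$. Everything else is formal diagram manipulation with inclusions, and the passage from "finite disjoint union of two opens" to "finite disjoint union" is an obvious induction, so it suffices to treat $r = 2$.
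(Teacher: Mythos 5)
Your proof is correct and follows essentially the same route as the paper's: restriction along inclusions handles open subsets, and decomposing $P$ over the open partition $\{U_j\}$ (using that components of a CW complex are open--closed subcomplexes) handles disjoint unions. The only point where the paper is slightly more careful is that the pieces $\phi^{-1}(U_j)$ may have dimension strictly less than $m$, so it invokes the monotonicity statement (\Cref{prop: monotonicity of m-secat}) instead of asserting they are again $m$-dimensional CW complexes; under the ``dimension at most $m$'' reading of the definition this is purely cosmetic.
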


\begin{proof}
Let $p \colon E \to B$ be a fibration.
Suppose $U$ is an open subset of $B$ that satisfies property $A_{m,\,p}$, and let $U' \subseteq U$ be an open subset. 
Let $\phi \colon P \to U'$ be a map, where $P$ is an $m$-dimensional CW complex. 
Let $j \colon U' \hookrightarrow U$ be the inclusion map. 
Then there exists a map $s \colon P \to E$ such that $p \circ s = \iota_U \circ (j \circ \phi)$. 
As $\iota_U \circ j = \iota_{U'}$, it follows that $p\circ s = \iota_{U'} \circ \phi$. 

Suppose $U_1$ and $U_2$ are disjoint open subsets of $B$ that satisfies the property $A_{m,\,p}$.
Let $\phi \colon P \to U_1 \sqcup U_2$ be a map, where $P$ is an $m$-dimensional CW complex. 
Let $P_{i}$ be the CW complex defined as $P_{i} = \phi^{-1}(U_i)$.
Note that $P_{i}$ consists of connected components $C$ of $P$ such that $\phi(C) \subseteq U_i$.
Let $\phi_i \colon P_i \to U_i$ denote the map induced from $\phi$ by restricting its domain and codomain.
As $\dim(P_{i}) \leq m$, it follows by \Cref{prop: monotonicity of m-secat} that there exists a map $s_i \colon P_i \to E$ such that $p \circ s_i = \iota_{U_i} \circ \phi_i$.
As $P = P_1 \sqcup P_2$, we get a map $s \colon P \to E$ given by 
$$
    s(x) = 
\begin{cases}
    s_1(x), & \text{for } x \in P_1,\\
    s_2(x), & \text{for } x \in P_2.
\end{cases}
$$
Clearly, $s$ satisfies $p \circ s = \iota_{U_1 \sqcup U_2} \circ \phi$.
\end{proof}

In order to prove a product inequality for $\sctm$, we revisit the following results. 
Recall that an open cover $\mathcal{U}=\{U_0, \dots, U_r\}$ of $X$ is called an \emph{$(n+1)$-cover} if every subcollection $\{U_{j_0}, \dots, U_{j_n}\}$ of $(n+1)$ sets from $\mathcal{U}$ also covers $X$.

\begin{lemma}[{\cite{F-G-L-O}}\label{lem: equivalent criterion n+1 cover}]
A cover $\mathcal{V}=\{V_0,\dots, V_{k+n}\}$ of $X$ is $(n+1)$-cover if and only if each $x\in X$ is contained in at least $k+1$ sets of $\mathcal{V}$.    
\end{lemma}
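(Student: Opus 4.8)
The final statement is Lemma~\ref{lem: equivalent criterion n+1 cover}, attributed to \cite{F-G-L-O}: a cover $\mathcal{V}=\{V_0,\dots,V_{k+n}\}$ of $X$ is an $(n+1)$-cover if and only if each $x\in X$ lies in at least $k+1$ of the sets $V_j$.

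\medskip

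The plan is to prove both implications by a straightforward counting argument on the complement sets $A_j := X \setminus V_j$. The key reformulation I would use first is this: a point $x$ fails to be covered by a subcollection $\{V_{j_0},\dots,V_{j_n}\}$ precisely when $x \in A_{j_0}\cap\cdots\cap A_{j_n}$, i.e.\ when $x$ lies in all $n+1$ of the corresponding complements. Since $\mathcal{V}$ has $k+n+1$ members total, for a fixed $x$ the number of indices $j$ with $x\in V_j$ and the number with $x\in A_j$ sum to $k+n+1$.

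\medskip

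For the forward direction, suppose $\mathcal{V}$ is an $(n+1)$-cover and, for contradiction, that some $x$ lies in at most $k$ of the $V_j$. Then $x$ lies in at least $(k+n+1)-k = n+1$ of the complements $A_j$; pick $n+1$ such indices $j_0,\dots,j_n$. The subcollection $\{V_{j_0},\dots,V_{j_n}\}$ then misses $x$, contradicting the $(n+1)$-cover property. For the converse, suppose every $x$ lies in at least $k+1$ of the $V_j$, and let $\{V_{j_0},\dots,V_{j_n}\}$ be any subcollection of $n+1$ sets. Given $x\in X$, the number of indices among $\{0,\dots,k+n\}\setminus\{j_0,\dots,j_n\}$ is $k$, so $x$ can lie in the complements of at most $k$ of the remaining sets; since $x$ lies in at least $k+1$ of all the $V_j$, at least one of these must be among $V_{j_0},\dots,V_{j_n}$. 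Hence the subcollection covers $x$, and as $x$ was arbitrary it covers $X$.

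\medskip

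There is no real obstacle here --- the argument is pure pigeonhole bookkeeping, and the only thing to be careful about is getting the arithmetic of the index counts right (total $k+n+1$ sets, subcollections of size $n+1$, threshold $k+1$). Since the lemma is cited from \cite{F-G-L-O}, one could alternatively just invoke it, but the above two-paragraph double-counting argument is self-contained and short enough to include directly.
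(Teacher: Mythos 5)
Your argument is correct and complete. Note that the paper itself gives no proof of this lemma --- it is simply quoted from \cite{F-G-L-O} --- so there is no in-paper argument to compare against; your pigeonhole bookkeeping (a point missed by a subcollection of $n+1$ sets lies in all $n+1$ corresponding complements, and the counts $a+b=k+n+1$ force the two conditions to be equivalent) is exactly the standard proof and the arithmetic checks out in both directions.
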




\begin{theorem}[{\cite[Theorem 2.5]{dranishnikov2009lusternik}}]
\label{thm: extend cover to k+1 cover}
Let $\mathcal{U}=\{U_0, \dots, U_k\}$ be an open cover for a normal topological space $X$. 
Then, for any $l \geq k$, there is an open $(k+1$)-cover $\{U_0, \dots, U_l\}$ for $X$, extending $\mathcal{U}$ such that for $n>k$, $U_n$ is a disjoint union of open sets that are subsets of the $U_j$, where $0 \leq j \leq k$.   
\end{theorem}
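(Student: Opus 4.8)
The plan is to enlarge the cover one set at a time, which reduces the theorem to the following inductive step: given an open $(k+1)$-cover $\{U_0,\dots,U_n\}$ of $X$ with $n\ge k$, produce a single new open set $U_{n+1}$, of the form $\bigsqcup_{\alpha}V_\alpha$ with the $V_\alpha$ pairwise disjoint open sets each contained in one of $U_0,\dots,U_k$, so that $\{U_0,\dots,U_{n+1}\}$ is again a $(k+1)$-cover. Since $\mathcal{U}$ itself is trivially a $(k+1)$-cover, being a cover by exactly $k+1$ sets, iterating this step $l-k$ times will yield the desired cover $\{U_0,\dots,U_l\}$.

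First I would use \Cref{lem: equivalent criterion n+1 cover} to turn the $(k+1)$-cover condition into a counting condition: $\{U_0,\dots,U_n\}$ is a $(k+1)$-cover exactly when every point of $X$ lies in at least $n-k+1$ of the sets, whereas $\{U_0,\dots,U_{n+1}\}$ is a $(k+1)$-cover exactly when every point lies in at least $n-k+2$ of them. So the only condition on $U_{n+1}$ is that it contain the deficiency locus
\[
D:=\{x\in X:x\text{ lies in exactly }n-k+1\text{ of }U_0,\dots,U_n\}=X\setminus\bigcup_{\substack{T\subseteq\{0,\dots,n\}\\|T|=n-k+2}}\ \bigcap_{i\in T}U_i,
\]
and the displayed identity shows that $D$ is closed. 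Next I would stratify $D$ according to which sets a point misses: for each $M\subseteq\{0,\dots,n\}$ with $|M|=k$, set $D_M:=D\cap\bigcap_{i\in M}(X\setminus U_i)$, the set of points of $D$ that miss exactly the sets $U_i$ with $i\in M$. Each $D_M$ is closed, being an intersection of closed sets; there are only finitely many of them; they are pairwise disjoint; and $D=\bigsqcup_M D_M$.

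The heart of the argument is then a pigeonhole observation combined with normality. Since $|M|=k<k+1$, we can pick an index $j(M)\in\{0,\dots,k\}\setminus M$, and then $D_M\subseteq U_{j(M)}$ because a point of $D_M$ misses only the sets $U_i$ with $i\in M$; this is precisely what will force the pieces of $U_{n+1}$ to lie inside the first $k+1$ sets. As $X$ is normal and the $D_M$ form a finite family of pairwise disjoint closed sets, they have pairwise disjoint open neighbourhoods $N_M$. I would then set $V_M:=N_M\cap U_{j(M)}$ and $U_{n+1}:=\bigsqcup_M V_M$: each $V_M$ is open and contained in one of $U_0,\dots,U_k$, the $V_M$ are pairwise disjoint, and $V_M\supseteq D_M$ because $D_M\subseteq U_{j(M)}$, so that $U_{n+1}\supseteq D$. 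This finishes the inductive step; the same argument applies unchanged at every stage, since the indices $0,\dots,k$ of the original sets remain available throughout.

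The main obstacle I anticipate is the bookkeeping in the middle step: one must notice that $D$ and each stratum $D_M$ are genuinely closed --- this is what makes normality (rather than some weaker separation property) the natural hypothesis and lets us separate the strata by disjoint open sets --- and one must check that the inequality $|M|=k<k+1$ is exactly what guarantees that each stratum, and hence each piece of the newly added set, lies inside one of $U_0,\dots,U_k$. The remaining points (the iteration count, and that a disjoint union of such open pieces is of the required form) are routine.
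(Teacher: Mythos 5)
Your argument is correct. The paper itself gives no proof of this statement --- it is imported verbatim from Dranishnikov's paper as \cite[Theorem 2.5]{dranishnikov2009lusternik} --- so there is nothing internal to compare against; judged on its own, your inductive step is sound. The reduction via \Cref{lem: equivalent criterion n+1 cover} to the counting condition is applied with the correct arithmetic (at stage $n$ the $(k+1)$-cover condition is ``every point lies in at least $n-k+1$ sets''), the deficiency locus $D$ is closed for the reason you give, each point of $D$ misses exactly $k$ of the sets so the strata $D_M$ with $|M|=k$ are finitely many, closed, pairwise disjoint, and exhaust $D$, and the pigeonhole choice $j(M)\in\{0,\dots,k\}\setminus M$ together with disjoint open neighbourhoods (available in a normal space for a finite disjoint family of closed sets) produces $U_{n+1}$ of exactly the required form. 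The uniformity of the step --- $|M|=k$ at every stage, so the indices $0,\dots,k$ always suffice --- is the one point that needed checking, and you checked it. This is a clean, self-contained route to the result; the published proofs of such ``extension to an $(n+1)$-cover'' statements are often phrased via partitions of unity subordinate to the original cover, but your direct use of normality is at least as elementary and proves the same statement.
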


\begin{proposition}
\label{prop: prod-ineq for m-secat}
Suppose $p_i \colon E_i \to B_i$ (for $i=1,2$) are fibrations, where $B_1$ and $B_2$ are normal spaces. 
Then 
$$
    \sctm(p_1 \times p_2) \leq \sctm(p_1) + \sctm(p_2),
$$
where $p_1 \times p_2 \colon E_1 \times E_2 \to B_1 \times B_2$ denotes the product fibration.
\end{proposition}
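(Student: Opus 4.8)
The plan is to run the classical strategy for product inequalities of sectional‑category‑type invariants, which here is built entirely on $(n+1)$-covers. Write $r := \sctm(p_1)$ and $s := \sctm(p_2)$; we may assume both are finite, since otherwise the right‑hand side is $\infty$ and there is nothing to prove. Fix an open cover $\{U_0,\dots,U_r\}$ of $B_1$ satisfying property $A_{m,\,p_1}$ and an open cover $\{V_0,\dots,V_s\}$ of $B_2$ satisfying property $A_{m,\,p_2}$.

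First I would use normality of $B_1$ together with \Cref{thm: extend cover to k+1 cover} (taking $l=r+s$) to enlarge $\{U_0,\dots,U_r\}$ to an open $(r+1)$-cover $\{U_0,\dots,U_{r+s}\}$ of $B_1$ in which every newly added set $U_j$ (for $j>r$) is a disjoint union of open subsets of members of the original cover. By \Cref{prop: inheritance of m-secat}, property $A_{m,\,p_1}$ passes to open subsets and to finite disjoint unions, so \emph{every} $U_j$ in the enlarged cover satisfies $A_{m,\,p_1}$. Doing the same to $B_2$ yields an open $(s+1)$-cover $\{V_0,\dots,V_{r+s}\}$, with every $V_j$ satisfying $A_{m,\,p_2}$. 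The point of this step is that both covers are now indexed by the same set $\{0,1,\dots,r+s\}$ while retaining the relevant property.

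The decisive step is the consolidation, and this is where I expect the only real idea to be needed. Rather than grouping the products $U_j\times V_k$ along anti‑diagonals $j+k=i$ — which would force me to control non‑disjoint unions, a structure incompatible with property $A_{m,\,\cdot}$ — I would simply set $W_j := U_j\times V_j$ for $j=0,\dots,r+s$. These are $r+s+1$ open subsets of $B_1\times B_2$, and I claim they cover it: given $(b_1,b_2)$, \Cref{lem: equivalent criterion n+1 cover} shows $b_1$ lies in at least $s+1$ of the $U_j$ and $b_2$ lies in at least $r+1$ of the $V_j$; since both of these index subsets sit inside $\{0,\dots,r+s\}$, which has $r+s+1$ elements, inclusion–exclusion forces a common index $j$ with $b_1\in U_j$ and $b_2\in V_j$, i.e.\ $(b_1,b_2)\in W_j$.

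Finally I would check that each $W_j=U_j\times V_j$ satisfies property $A_{m,\,p_1\times p_2}$ (noting $p_1\times p_2$ is a fibration). Given an $m$-dimensional CW complex $P$ and a map $\phi\colon P\to U_j\times V_j$, compose with the two coordinate projections to obtain $\phi_1\colon P\to U_j$ and $\phi_2\colon P\to V_j$; apply $A_{m,\,p_1}$ to $\phi_1$ and $A_{m,\,p_2}$ to $\phi_2$ to get lifts $s_1\colon P\to E_1$ and $s_2\colon P\to E_2$, and then $(s_1,s_2)\colon P\to E_1\times E_2$ is the required lift of $\phi$ through $p_1\times p_2$. Hence $\{W_0,\dots,W_{r+s}\}$ witnesses $\sctm(p_1\times p_2)\le r+s=\sctm(p_1)+\sctm(p_2)$. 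The hard part is really just recognizing the consolidation in the third paragraph — inflate \emph{both} covers to a common index set and use honest diagonal products, with \Cref{prop: inheritance of m-secat} doing the work of keeping the enlarged covers usable; everything else is routine.
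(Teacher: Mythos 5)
Your proposal is correct and follows essentially the same route as the paper's proof: extend both covers to $(r+1)$- and $(s+1)$-covers on the common index set $\{0,\dots,r+s\}$ via \Cref{thm: extend cover to k+1 cover}, keep the lifting property via \Cref{prop: inheritance of m-secat}, form the diagonal products $U_j\times V_j$, and verify the covering with the counting criterion of \Cref{lem: equivalent criterion n+1 cover}. No substantive differences.
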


\begin{proof}
    Assume that $\sctm(p_1) = k$ and $\sctm(p_2) = n$. 
    Suppose $\mathcal{U} = \{U_0, \dots, U_{k}\}$ and $\mathcal{V} = \{V_0, \dots, V_{n}\}$ are open covers of $B_1$ and $B_2$ that satisfy properties $A_{m,p_1}$ and $A_{m,p_2}$, respectively. 
    By \cref{thm: extend cover to k+1 cover}, the open cover $\mathcal{U}$ can be extended to a $(k+1)$-cover $\mathcal{U}' = \{U_0,\dots, U_{k+n}\}$ such that for $N > k$, $U_N$ is a disjoint union of open sets that are subsets of the $U_j$ for $0 \leq j \leq k$.
    Similarly, the open cover $\mathcal{V}$ can be extended to an $(n+1)$-cover $\mathcal{V}'=\{V_0,\dots, V_{k+n}\}$ such that for $N > n$, $V_N$ is a disjoint union of open sets that are subsets of the $V_j$ for $0 \leq j \leq n$. 
    Consider the collection 
    $$
        \mathcal{W}
            := \{W_i := U_i \times V_i \mid U_i \in \mathcal{U}', V_i \in \mathcal{V}', 0\leq i\leq n+k\}.
    $$ 
    
    We now show that $\mathcal{W}$ covers $B_1\times B_2$. 
    Suppose, on the contrary, there exists $(b_1,b_2)\in B_1\times B_2$ such that $(b_1,b_2)\notin W_i$ for all $0\leq i\leq n+k$. 
    Since $\mathcal{U}'$ is a $(k+1)$-cover, by \Cref{lem: equivalent criterion n+1 cover} $b_1$ belongs to at least $n+1$ sets of $\mathcal{U}'$. 
    Without loss of generality, denote these sets by $U_0,\dots, U_n$. 
    Then, by our assumption, $b_2 \notin V_0\cup \dots \cup V_n$. 
    This implies that $b_2$ can lie only in $V_{n+1}, \dots, V_{n+k}$. 
    But this is a contradiction, as $\mathcal{V}'$ is an $(n+1)$-cover and $b_2$ must lie in at least $k+1$ sets of $\mathcal{V}'$.

    We will now show that $\sctm(p_1\times p_2)\leq k+n$. 
    Let $\phi \colon P \to W_i$ be a map from an $m$-dimensional CW complex $P$ to $W_i=U_i \times V_i \in \mathcal{W}$ for some $i$. 
    Then $\mathrm{pr}_1 \circ \phi$ and $\mathrm{pr}_2 \circ \phi$ are maps from $P$ to $U_i$ and $V_i$, respectively, where $\mathrm{pr}_1 \colon W_i \to U_i$ and $\mathrm{pr}_2 \colon W_i \to V_i$ denote the projection maps. 
    Using \Cref{prop: inheritance of m-secat}, we obtain maps $s_1 \colon P \to E_1$ and $s_2 \colon P \to E_2$ such that $p_1 \circ s_1 = \iota_{U_i} \circ (\mathrm{pr}_1 \circ \phi)$ and $p_2 \circ s_2 = \iota_{V_i} \circ (\mathrm{pr}_2 \circ \phi)$, where $\iota_{U_i} \colon U_i \hookrightarrow B_1$ and $\iota_{V_i} \colon V_i \hookrightarrow B_2$ denote the inclusion maps. 
    Then the map $s \colon P \to E_1 \times E_2$, defined by $s(p) = (s_1(p),s_2(p))$, satisfies $(p_1\times p_2)\circ s = \iota_{W_i} \circ \phi$, where $\iota_{W_i} \colon W_i \hookrightarrow B_1 \times B_2$ is the inclusion map.
    Thus, $\sctm(p_1 \times p_2) \leq k+n$. 
\end{proof}

\section{$m$-category}
\label{sec: m-category}

In this section, we study the notion of the $m$-dimensional Lusternik--Schnirelmann category and its connections with the classical LS category. 
We prove the monotonicity property and define the notion of the $m$-category of maps. 
We explore its relationship with the $m$-sectional category, derive a cohomological lower bound for the $m$-category, and establish a product inequality.

\begin{definition}[{\cite{Fox-LS-category}, \cite[Definition 20]{schwarz1961genus}}]
The $m$-dimensional LS category (or $m$-category), denoted by $\mcat(X)$, of a topological space $X$ is the least integer $k$ such that $X$ admits an open cover $\{U_i\}_{i=0}^{k}$, where each $U_i$ has the following property: for any map $\phi \colon P \to U_i$ from an $m$-dimensional CW complex $P$, the composition $\iota_{U_i} \circ \phi$ is nullhomotopic, where $\iota_{U_i} \colon U_i \hookrightarrow X$ is the inclusion map.
\end{definition}

\begin{property}
An open subset $U$ of a topological space $X$ will be said to satisfy property $B_m$ if, for every $m$-dimensional CW complex $P$ and every map $\phi \colon P \to U$, the composition $\iota_U \circ \phi$ is nullhomotopic.
An open cover $\mathcal{U}$ of $B$ will be said to satisfy property $B_m$ if every element of $\mathcal{U}$ satisfies property $B_m$.
\end{property}

\begin{proposition}
\label{prop: monotonicity of m-cat}
Suppose $X$ is a topological space. 
Then
\begin{enumerate}
\item an open subset of $X$ satisfies property $B_m$ if and only if it satisfies the property $B_n$ for all $n \leq m$.
In particular, 
$
    \ct_n(X) \leq \mcat(X)
$ 
for all $n \leq m$.

\item $\mcat(X) \leq \ct(X)$ for all $m$.
\end{enumerate}
\end{proposition}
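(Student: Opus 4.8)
The plan is to follow the same strategy used for \Cref{prop: monotonicity of m-secat}, with the lifting condition replaced throughout by nullhomotopy of the relevant inclusion.

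For part~(1), the ``if'' direction is immediate (take $n = m$). For the ``only if'' direction, I would start with an open set $U \subseteq X$ satisfying property $B_m$ together with a map $\psi \colon Q \to U$ from an $n$-dimensional CW complex $Q$ with $n \leq m$. Choosing a $0$-cell $q_0 \in Q$, form the $m$-dimensional CW complex $P := Q \vee S^m$ (attach $D^m$ to $Q$ along $\partial D^m$ collapsed to $q_0$), and extend $\psi$ to $\phi \colon P \to U$ by sending the wedge summand $S^m$ to the single point $\psi(q_0)$. Property $B_m$ then gives a nullhomotopy of $\iota_U \circ \phi$; restricting it along the inclusion $Q \times I \hookrightarrow P \times I$ of the subcomplex $Q$ produces a nullhomotopy of $(\iota_U \circ \phi)|_Q = \iota_U \circ \psi$, so $U$ satisfies $B_n$. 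The ``in particular'' clause follows formally: an open cover $\{U_i\}_{i=0}^k$ witnessing $\mcat(X) = k$ satisfies $B_m$, hence $B_n$ for all $n \leq m$ by what was just shown, so $\ct_n(X) \leq k$.

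For part~(2), take an open cover $\{U_i\}_{i=0}^k$ of $X$ witnessing $\ct(X) = k$, so that each inclusion $\iota_{U_i} \colon U_i \hookrightarrow X$ is nullhomotopic. Given any $m$-dimensional CW complex $P$ and any map $\phi \colon P \to U_i$, precomposing a nullhomotopy of $\iota_{U_i}$ with $\phi \times \id_I$ yields a nullhomotopy of $\iota_{U_i} \circ \phi$; hence every $U_i$ satisfies $B_m$, and therefore $\mcat(X) \leq k$.

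I do not anticipate any genuine obstacle here: every step is elementary. The only points that warrant a brief verification are that $P = Q \vee S^m$ really is a CW complex of dimension $m$ and that the restriction of a nullhomotopy to a subcomplex is again a nullhomotopy, both of which are standard. One could alternatively deduce part~(2) from \Cref{prop: monotonicity of m-secat}(2) once the identity $\ct_m(X) = \sct_m(e_X)$ is available, but since that identity (\Cref{cor: m-cat = m-secat(e_X)}) is established later, I would keep the self-contained argument above.
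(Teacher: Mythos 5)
Your proposal is correct and follows essentially the same route as the paper: part (1) is proved by the wedge construction $P = Q \vee S^m$ borrowed from the proof of \Cref{prop: monotonicity of m-secat}, with the lifting condition replaced by restriction of the nullhomotopy to the subcomplex $Q$, and part (2) by precomposing a nullhomotopy of a categorical inclusion $\iota_U$ with $\phi$. No issues.
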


\begin{proof}
The proof of (1) follows similar argument as in \Cref{prop: monotonicity of m-secat}.  

(2) If $U$ is a categorical open subset of $X$, i.e., $\iota_U \colon U \hookrightarrow X$ is nullhomotopic, then $\iota_U \circ \phi$ is nullhomotopic for any map $\phi \colon P \to U$. 
Hence, the desired the inequality follows. 
\end{proof}

\begin{proposition}[{\cite[Lemma 2.1]{MR2879375}}]
If $X$ is $r$-connected, then $\ct_m(X) = 0$ for all $m \leq r$.  
\end{proposition}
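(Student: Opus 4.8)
The plan is to show that if $X$ is $r$-connected and $m \leq r$, then the single open set $U = X$ itself satisfies property $B_m$, so that the cover $\{X\}$ witnesses $\ct_m(X) \leq 0$, hence $\ct_m(X) = 0$. Concretely, I need to verify: for every $m$-dimensional CW complex $P$ and every map $\phi \colon P \to X$, the composition $\iota_X \circ \phi = \phi$ is nullhomotopic. Since $X$ is $r$-connected and $\dim P = m \leq r$, this is exactly the statement that a map from an $m$-complex into an $r$-connected space with $m \le r$ is nullhomotopic — a standard consequence of obstruction theory (the obstructions to a nullhomotopy lie in $H^{j+1}(P; \pi_j(X))$ for $j \leq m \leq r$, which all vanish since $\pi_j(X) = 0$ for $j \leq r$). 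Alternatively, one can argue by induction on the skeleta of $P$: any map $S^{j-1} \to X$ extends over $D^j$ for $j \le r+1$, so cells of dimension $\le m \le r$ can be successively contracted.

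First I would fix such a $\phi \colon P \to X$ and set up the induction over the skeleton filtration $P^{(0)} \subseteq P^{(1)} \subseteq \cdots \subseteq P^{(m)} = P$. The base case uses path-connectedness of $X$ (which holds since $r \geq m \geq 0$, so $r \ge 0$) to homotope $\phi|_{P^{(0)}}$ to a constant. The inductive step: given a homotopy $H \colon P^{(j-1)} \times I \to X$ from $\phi|_{P^{(j-1)}}$ to a constant with $j - 1 < m \le r$, use the homotopy extension property of the CW pair $(P, P^{(j-1)})$ to extend $H$ to a homotopy of $\phi$ on all of $P$; this replaces $\phi$ by a map that is constant on $P^{(j-1)}$. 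Then each $j$-cell of $P$ gives an attaching sphere mapped to the basepoint, so the map factors through a wedge of $j$-spheres $\bigvee S^j \to X$; since $j \le r$ and $X$ is $r$-connected, $\pi_j(X) = 0$, so this extends to a nullhomotopy on the $j$-cells, giving a homotopy on $P^{(j)}$ to a constant. Iterating up to $j = m$ yields a nullhomotopy of $\phi$.

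I expect the only real subtlety — and it is a minor one — to be the careful bookkeeping with the homotopy extension property at each stage so that the partial homotopies on skeleta assemble into an honest homotopy of the original map $\phi$ defined on all of $P$; handling infinite-dimensional or locally infinite $P$ is no obstacle since $\dim P = m$ is finite and CW, so only finitely many skeleta are involved and HEP applies at each step. No new ideas beyond standard CW obstruction theory are needed. (Indeed this proposition is quoted from \cite{MR2879375}, so the argument above merely recovers the known proof.)
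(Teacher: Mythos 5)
Your argument is correct: the paper states this proposition as an imported result (citing Lemma~2.1 of the reference) and gives no proof of its own, so there is nothing to compare against, but your reduction to the single-set cover $\{X\}$ together with the standard fact that any map from a CW complex of dimension $m\le r$ into an $r$-connected space is nullhomotopic (via skeletal induction with the homotopy extension property, or equivalently obstruction theory) is exactly the expected argument. The only bookkeeping point, which you already flag, is that after each application of HEP one works with a map homotopic to the original $\phi$ that is constant on the previous skeleton, so the cell-by-cell nullhomotopies on $P^{(j)}/P^{(j-1)}\simeq\bigvee S^j$ assemble correctly; this is handled as you describe.
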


\subsection{$m$-category of a map}\label{subsec: m-category of map}

\begin{definition}
The $m$-dimensional LS category of a map $f \colon X \to Y$, denoted by $\mcat(f)$, is defined to be the least integer $k$ such that $X$ admits an open cover $\{U_i\}_{i=0}^{k}$, where each $U_i$ has the following property: for any map $\phi \colon P \to U_i$ from an $m$-dimensional CW complex P, the composition $f \circ \iota_{U_i} \circ \phi$ is nullhomotopic, where $\iota_{U_i} \colon U_{i} \hookrightarrow X$ is the inclusion map.
\end{definition}

Observe that $\mcat(\id_X) = \mcat(X)$, so the $m$-category of a map extends the notion of $m$-category of a space.

\begin{property}
Suppose $f \colon X \to Y$ is a continuous map. 
An open subset $U$ of $X$ will said to satisfy property $C_{m,\,f}$ if, every $m$-dimensional CW complex $P$ and every map $\phi \colon P \to U$, the composition $f \circ \iota_U \circ \phi$ is nullhomotopic.
An open cover $\mathcal{U}$ of $B$ will said to satisfy property $C_{m,\,f}$ if every element of $\mathcal{U}$ satisfy property $C_{m,\,f}$.
\end{property}

The following proposition is the $m$-category analogue of \cite[Proposition 3.3 (2)]{Fox-LS-category}.

\begin{proposition}\label{prop: secatm and catmf}
Suppose $p \colon E \to B$ is a surjective fibration such that the diagram \eqref{diag: pullback digram} is a pullback.
\begin{enumerate}
    \item Then $\sctm(p') \leq \mcat(f)$.
    \item If $E$ is contractible, then $\sctm(p') = \mcat(f)$.
\end{enumerate}
\end{proposition}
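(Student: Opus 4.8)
The plan is to isolate a single comparison of the defining properties and read both parts off from it: for an open subset $U \subseteq B'$, property $C_{m,\,f}$ implies property $A_{m,\,p'}$, and, when $E$ is contractible, the converse also holds. Granting the first implication, part (1) is immediate: if $\mcat(f)=k<\infty$ (if it is infinite there is nothing to prove), choose an open cover $\{U_i\}_{i=0}^k$ of $B'$ satisfying $C_{m,\,f}$; then the same cover satisfies $A_{m,\,p'}$, so $\sctm(p')\le k$. Granting the converse as well, part (2) follows by running the argument in the other direction, which together with (1) forces equality.

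For the first implication I would argue as follows. Let $U\subseteq B'$ satisfy $C_{m,\,f}$, let $P$ be an $m$-dimensional CW complex, and let $\phi\colon P\to U$. By hypothesis $f\circ\iota_U\circ\phi\colon P\to B$ is nullhomotopic, say via $H\colon P\times I\to B$ with $H_0 = f\circ\iota_U\circ\phi$ and $H_1 = c_{b_0}$ for some $b_0\in B$. Since $p$ is surjective, pick $e_0\in p^{-1}(b_0)$, so that $c_{e_0}\colon P\to E$ lifts $H_1$. Applying the homotopy lifting property of the fibration $p$ to the reversed homotopy $t\mapsto H_{1-t}$ starting from $c_{e_0}$, I obtain $\widetilde H\colon P\times I\to E$ with $\widetilde H_0 = c_{e_0}$ and $p\circ\widetilde H_t = H_{1-t}$; then $s:=\widetilde H_1\colon P\to E$ satisfies $p\circ s = H_0 = f\circ(\iota_U\circ\phi)$. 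Since the square \eqref{diag: pullback digram} is a pullback, the universal property (exactly as in \eqref{diag: universal diag for pullback}) yields a map $s'\colon P\to E'$ with $p'\circ s' = \iota_U\circ\phi$. Hence $U$ satisfies $A_{m,\,p'}$, proving (1).

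For the converse, assume $E$ is contractible and let $U\subseteq B'$ satisfy $A_{m,\,p'}$. Given $\phi\colon P\to U$ with $P$ an $m$-dimensional CW complex, choose $s'\colon P\to E'$ with $p'\circ s' = \iota_U\circ\phi$; then $f'\circ s'\colon P\to E$ satisfies $p\circ f'\circ s' = f\circ p'\circ s' = f\circ\iota_U\circ\phi$. Because $E$ is contractible, $f'\circ s'$ is nullhomotopic, and post-composing such a nullhomotopy with $p$ exhibits $f\circ\iota_U\circ\phi$ as nullhomotopic. Thus $U$ satisfies $C_{m,\,f}$, and combined with (1) this gives $\sctm(p') = \mcat(f)$.

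I do not anticipate a serious obstacle: this is a pullback-and-lifting manipulation paralleling \cite[Proposition 3.3(2)]{Fox-LS-category}. The only points needing care are the bookkeeping of the homotopy-lifting direction in the first implication — one lifts the \emph{reversed} nullhomotopy so as to begin from the available constant lift $c_{e_0}$, whose existence uses surjectivity of $p$ — and, in the converse, the observation that contractibility of $E$ is precisely what converts an upstairs lift into a nullhomotopy downstairs.
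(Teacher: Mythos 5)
Your proposal is correct and follows essentially the same route as the paper's proof: lift the nullhomotopy of $f\circ\iota_U\circ\phi$ through the fibration $p$ (using surjectivity to choose a point of $p^{-1}(b_0)$), then invoke the pullback universal property to produce the section of $p'$; and for the converse, observe that $f\circ\iota_U\circ\phi = p\circ f'\circ s'$ factors through the contractible space $E$. Your explicit reversal of the homotopy before lifting is just a more careful phrasing of the same lifting step the paper performs.
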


\begin{proof}
(1) Let $V$ be an open subset of $B'$ that satisfies property $C_{m,\,f}$. 
Suppose $\phi \colon P \to V$ is a map, where $P$ is an $m$-dimensional CW complex.
Then $f \circ \iota_V \circ \phi$ is nullhomotopic.
Consider the homotopy $H_t \colon P \to B$ such that $H_0 = f \circ \iota_V \circ \phi$ and $H_1 = c_{b_0}$, where $c_{b_0} \colon P \to B$ is the constant map which takes the value $b_0 \in B$.
Let $e_0 \in p^{-1}(b_0)$.
Since $p$ is a fibration, there exists a homotopy $\widetilde{H}_t \colon P \to E$ such that $p \circ \widetilde{H}_t = H_t$ and $\widetilde{H}_1 = c_{e_0}$.
Hence, $s := \widetilde{H}_0$ is a map such that $p \circ s = p \circ \widetilde{H}_0 = H_0 = f \circ \iota_V \circ \phi$.
Thus, by the universal property of pullbacks, there exists a unique map $s' \colon P \to E'$ such that the diagram \eqref{diag: universal diag for pullback} commutes. 
Hence, $p' \circ s' = \iota_V \circ \phi$, i.e., $V$ satisfies the property $A_{m,\,p'}$.

(2) Suppose $V$ is an open subset of $B'$ that satisfies property $A_{m,\,p'}$.
Let $P$ be the $m$-dimensional CW complex with a map $\phi \colon P \to V$. 
Then there exists a map $s' \colon P \to E'$ such that $p' \circ s' = \iota_{V} \circ \phi$. 
Hence, $p \circ f' \circ s' = f \circ p' \circ s' = f \circ \iota_V \circ \phi$.
Since the composition $p \circ f' \circ s'$ factors through a contractible space $E$, it is nullhomotopic. 
Hence, $f \circ \iota_{V} \circ \phi$ is also nullhomotopic, i.e., $V$ satisfies property $C_{m,\,f}$.
\end{proof}

Taking $f$ to be the identity map in the above proposition, we obtain the following result.

\begin{corollary}
\label{cor: m-secat leq m-cat}
Let $p \colon E \to B$ be a surjective fibration. 
\begin{enumerate}
    \item Then $\sctm(p) \leq \mcat(B).$ 
    In particular, if $B$ is $r$-connected, then $\sctm(p) = 0$ for $m \leq r$.
    \item If $E$ is contractible, then $\sctm(p) = \mcat(B).$
\end{enumerate}
\end{corollary}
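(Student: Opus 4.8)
The plan is to deduce the statement directly from \Cref{prop: secatm and catmf} by specializing the pullback diagram \eqref{diag: pullback digram} to the case $B' = B$ and $f = \id_B$. With this choice the pullback of $p$ along $\id_B$ is (canonically) $p$ itself, so $p' = p$, and the hypothesis that $p$ be a surjective fibration is plainly preserved. Since $\mcat(\id_B) = \mcat(B)$ — the observation recorded immediately after the definition of the $m$-category of a map — part (1) of \Cref{prop: secatm and catmf} becomes $\sctm(p) \le \mcat(B)$, and part (2) becomes $\sctm(p) = \mcat(B)$ whenever $E$ is contractible. This yields both displayed relations of the corollary at once.

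It remains to verify the ``in particular'' clause of (1). Here I would invoke the vanishing result \cite[Lemma 2.1]{MR2879375}, which asserts that an $r$-connected space $X$ satisfies $\ct_m(X) = 0$ for all $m \le r$. Applying it to $X = B$ gives $\mcat(B) = 0$ for $m \le r$, and combining this with the inequality $\sctm(p) \le \mcat(B)$ just obtained forces $\sctm(p) = 0$ for $m \le r$, as claimed.

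I do not expect a genuine obstacle: the entire mathematical content already lives in \Cref{prop: secatm and catmf} together with the connectivity vanishing lemma, and the only thing to check is the bookkeeping that setting $f = \id_B$ identifies the pulled-back fibration $p'$ with $p$ and identifies $\mcat(f)$ with $\mcat(B)$. If one wished to sidestep the pullback formulation entirely, one could instead argue the two halves directly — for (1), lift a nullhomotopy of $\iota_U \circ \phi$ through the fibration $p$ to produce the section required by property $A_{m,\,p}$; for the reverse inequality in (2), factor $p \circ s$ through the contractible space $E$ to conclude $\iota_U \circ \phi$ is nullhomotopic — but this merely reproduces the $f = \id$ instance of the proof of \Cref{prop: secatm and catmf}, so the corollary route is the economical one.
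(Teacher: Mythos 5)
Your proposal is correct and matches the paper's argument exactly: the paper derives this corollary by taking $f = \id_B$ in \Cref{prop: secatm and catmf}, using $\mcat(\id_B) = \mcat(B)$, and the ``in particular'' clause follows from the cited connectivity vanishing result just as you describe.
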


As an application of our previous results, we now provide various examples of fibrations for which $\sct_m < \sct_n$ for $m<n$, as well as examples for which $\sct_m = \sct$.

\begin{example}
\label{Example 1}
\normalfont{
Consider the free path space fibration $\pi_{S^n} \colon P(S^n) \to S^n \times S^n$, where $n \geq 2$. 
Then 
$\sct_m(\pi_{S^n}) = 0$ for all $m \leq n-1$, by \cref{cor: m-secat leq m-cat} (1). 
However, $\sct(\pi_{S^n})$ is non zero. In fact, it is equal to the topological complexity of $S^n$, which is either $1$ or $2$. 
This shows that $\sctm$ can be strictly smaller than $\sct$. }
\end{example}

\begin{example}
\label{Example 2}
\normalfont{
Recall from \cite[Proposition 6.6 and Corollary 10.6]{Mc} that $K(G,n)$ is a topological group if $G$ is a discrete abelian group.
Consider a nontrivial principal $K(G,1)$-bundle $p \colon E \to B$, where $G$ is a discrete abelian group and $B$ has a CW complex structure. 
If the connectivity of $B$ is greater than $1$, then the bundle is trivial. 
Indeed, \cite[Theorem 13.1]{husemoller1994fibre} gives a bijection between the set of isomorphism classes of principal $K(G,1)$-bundles over $B$ and the cohomology group $H^2(B,G)$, which turns out to be trivial if $B$ is $2$-connected or higher, due to Hurewicz theorem, followed by the universal coefficient theorem. 
Hence, assume that $B$ is $1$-connected. 
Then $\sct_1(p)=0$, by \Cref{cor: m-secat leq m-cat} (1). 
But \Cref{prop: m-secat = secat if F is ''apsherical''} implies that $\sct_2(p)=\sct(p)$, which is nonzero as $p$ is nontrivial. 
This shows that $\sct_1$ can be strictly smaller than $\sct_2$. 
An explicit example of such map $p$ is given by the Hopf fibration $S^1 \hookrightarrow S^{2n+1} \xrightarrow{p}\mathbb{C}P^n$.}
\end{example}

\begin{example}
\label{Example 3}
\normalfont{
Consider the universal covering map $p \colon S^n\to \mathbb{R}P^n$ for $n \geq 2$. 
The mod $2$ cohomology ring of the base $\mathbb{R}P^n$ is generated by a degree-one class $x$ satisfying $x^{n+1}=0$. 
Clearly, $p^*(x)=0$, and by \Cref{prop: cohomological lower bound of m-secat} we obtain $\sct_1(p) \geq n$. 
Note that $\mcat(\mathbb{R}P^n)=\mathrm{cat}(\mathbb{R}P^n)=n$ for all $m$, see \cite[Example 2.5]{m-hom-dist}.
Therefore, by \Cref{prop: monotonicity of m-secat} and \Cref{cor: m-secat leq m-cat}, it follows that $\sctm(p)=\sct(p)=n$ for all $m$.
}
\end{example}

For a topological space $X$ with a chosen basepoint $x_0 \in X$, the path space fibration
$$
    e_X \colon P_{x_0}X \to X
$$
is given by $e_X(\alpha) = \alpha(1)$, where $P_{x_0}X = \{\alpha \in PX \mid \alpha(0)= x_0\}$.

\begin{corollary}
\label{cor: m-cat = m-secat(e_X)}
If $(X,x_0)$ is a path-connected topological space, then
$$
    \mcat(X) = \sctm\left(e_X \colon P_{x_0}X \to X \right).
$$
\end{corollary}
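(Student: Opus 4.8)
The plan is to recognize $e_X$ as a surjective fibration with contractible total space and then invoke \Cref{cor: m-secat leq m-cat} (2), which asserts precisely that $\sctm(p) = \mcat(B)$ for any surjective fibration $p \colon E \to B$ with $E$ contractible.

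First I would check the three required hypotheses. (i) The evaluation map $e_X \colon P_{x_0}X \to X$, $\alpha \mapsto \alpha(1)$, is a fibration; this is the standard based path-space fibration and follows from the path-lifting property of the compact–open topology (one may also cite it as the pullback of the free path fibration $\pi_X$ along $c_{x_0} \times \id_X$, or simply as a well-known fact). (ii) It is surjective: since $X$ is path-connected, for any $x \in X$ there is a path $\alpha$ from $x_0$ to $x$, and then $\alpha \in P_{x_0}X$ with $e_X(\alpha) = x$. (iii) The total space $P_{x_0}X$ is contractible: the homotopy $H_t(\alpha)(s) = \alpha((1-t)s)$ deformation retracts $P_{x_0}X$ onto the constant path $c_{x_0}$, staying within $P_{x_0}X$ throughout since every $H_t(\alpha)$ begins at $x_0$.

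With these verified, \Cref{cor: m-secat leq m-cat} (2) applied to $p = e_X \colon P_{x_0}X \to X$ yields $\sctm(e_X) = \mcat(X)$, which is the claimed equality.

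There is essentially no hard step here — the content is entirely in the preceding \Cref{cor: m-secat leq m-cat} (equivalently \Cref{prop: secatm and catmf} (2)), and what remains is the routine verification that $e_X$ is a surjective fibration and that the based path space is contractible. If one wished to avoid citing the fibration property as folklore, the mildest subtlety would be to present $e_X$ explicitly as a pullback of $\pi_X \colon PX \to X \times X$ along $c_{x_0} \times \id_X$ and use that fibrations are stable under pullback; but this is still routine.
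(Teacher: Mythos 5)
Your proposal is correct and matches the paper's (implicit) argument exactly: the corollary is meant to follow from \Cref{cor: m-secat leq m-cat}\,(2) by observing that $e_X$ is a surjective fibration (surjectivity from path-connectedness) with contractible total space $P_{x_0}X$. The verifications you supply, including the explicit contraction $H_t(\alpha)(s)=\alpha((1-t)s)$, are all sound.
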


The following proposition shows that \cite[Proposition 4.1]{MR2879375} holds under weaker conditions.

\begin{proposition}
\label{prop: prod-ineq for m-cat}
If $X$ and $Y$ are path-connected normal spaces, then 
$$
    \mcat(X\times Y)\leq \mcat(X)+\mcat(Y).
$$    
\end{proposition}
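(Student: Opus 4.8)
The plan is to reduce the product inequality for $\mcat$ to the already-established product inequality for $\sctm$ (namely \Cref{prop: prod-ineq for m-secat}) by using the identification $\mcat(X)=\sctm(e_X)$ from \Cref{cor: m-cat = m-secat(e_X)}. Concretely, I would first observe that the based path space fibrations $e_X\colon P_{x_0}X\to X$ and $e_Y\colon P_{y_0}Y\to Y$ are surjective fibrations (here path-connectedness of $X$ and $Y$ guarantees surjectivity, and both $P_{x_0}X$ and $P_{y_0}Y$ are contractible). Their product is the fibration $e_X\times e_Y\colon P_{x_0}X\times P_{y_0}Y\to X\times Y$. Since $X$ and $Y$ are normal, \Cref{prop: prod-ineq for m-secat} applies and yields
\[
    \sctm(e_X\times e_Y)\leq \sctm(e_X)+\sctm(e_Y).
\]

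Next I would relate $\sctm(e_X\times e_Y)$ to $\mcat(X\times Y)=\sctm(e_{X\times Y})$, where $e_{X\times Y}\colon P_{(x_0,y_0)}(X\times Y)\to X\times Y$ is the based path space fibration of the product with basepoint $(x_0,y_0)$. The point is that there is a natural fibre-preserving homeomorphism between $e_X\times e_Y$ and $e_{X\times Y}$: a based path $\gamma\colon I\to X\times Y$ with $\gamma(0)=(x_0,y_0)$ corresponds to the pair $(\pr_1\circ\gamma,\pr_2\circ\gamma)$ of based paths in $X$ and $Y$, and this correspondence commutes with evaluation at $1$ and is compatible with the identity on $X\times Y$. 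Hence $e_{X\times Y}$ and $e_X\times e_Y$ are (fibre-preserving) homotopy equivalent fibrations over $X\times Y$, and by \Cref{prop: fiber-preserving homotopy invariance} we get $\sctm(e_{X\times Y})=\sctm(e_X\times e_Y)$. Chaining the identifications:
\[
    \mcat(X\times Y)=\sctm(e_{X\times Y})=\sctm(e_X\times e_Y)\leq \sctm(e_X)+\sctm(e_Y)=\mcat(X)+\mcat(Y).
\]

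The only mildly delicate point is verifying that the path space of a product is genuinely (fibre-preservingly homeomorphic to) the product of path spaces with the compact–open topologies; this is the standard exponential-law type fact, so I would state it and invoke it rather than prove it in detail. A slightly cleaner alternative that avoids even this is to note that $P_{(x_0,y_0)}(X\times Y)\cong P_{x_0}X\times P_{y_0}Y$ on the nose as spaces over $X\times Y$, making the ``homotopy equivalence'' an equality; either way \Cref{prop: fiber-preserving homotopy invariance} (or even a direct cover-pushforward argument) closes the gap. The main obstacle, such as it is, is simply the bookkeeping to confirm the fibre-preserving identification respects the evaluation maps and the compact–open topology, after which the normality hypothesis feeds directly into \Cref{prop: prod-ineq for m-secat}.
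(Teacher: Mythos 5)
Your proposal is correct and follows essentially the same route as the paper: the paper's proof is exactly the one-line reduction "after natural identification of spaces, $e_{X\times Y}=e_X\times e_Y$," followed by an appeal to \Cref{cor: m-cat = m-secat(e_X)} and \Cref{prop: prod-ineq for m-secat}. You simply spell out the fibre-preserving identification and the role of path-connectedness and normality in more detail than the paper does.
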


\begin{proof}
After natural identification of spaces, we have $e_{X \times Y} = e_X \times e_Y$. 
Hence, the desired result follows from \Cref{cor: m-cat = m-secat(e_X)} and \Cref{prop: prod-ineq for m-secat}.
\end{proof}

\subsection{Bounds on $m$-category}

\begin{proposition}
\label{prop: cohomological lower bound on m-cat}
Let $(X,x_0)$ be a path-connected topological space.
If there exist cohomology classes $u_0, u_1, \dots,u_k \in H^*(X;R)$ (for any commutative ring R) with $u_0 \smile u_1 \smile \cdots \smile u_k \neq 0$, then $\mcat(X) \geq k+1$, where $m = \max\{\deg(u_i) \mid i = 0,1,\ldots,k\}$.
\end{proposition}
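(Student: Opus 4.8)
The plan is to reduce this to the cohomological lower bound for the $m$-sectional category established in \Cref{prop: cohomological lower bound of m-secat}, by viewing $\mcat(X)$ as the $m$-sectional category of the based path space fibration. By \Cref{cor: m-cat = m-secat(e_X)}, since $X$ is path-connected we have $\mcat(X) = \sctm(e_X)$, where $e_X \colon P_{x_0}X \to X$, $\alpha \mapsto \alpha(1)$, is a fibration and $m = \max\{\deg(u_i)\}$ is the same number in both statements. It therefore suffices to verify that the classes $u_0, \dots, u_k$ satisfy the hypotheses of \Cref{prop: cohomological lower bound of m-secat} for the fibration $p = e_X$.

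The one point to check is that $e_X^*(u_i) = 0$ for every $i$, and this follows at once from the fact that the total space $P_{x_0}X$ is contractible (via $(\alpha, s) \mapsto (t \mapsto \alpha((1-s)t))$). Contractibility gives $H^j(P_{x_0}X; R) = 0$ for all $j \geq 1$, so $e_X^*$ annihilates every positive-degree class of $H^*(X;R)$; here the $u_i$ are understood to have positive degree, as is needed for the statement to hold in the first place (a nonzero class in $H^0$ could otherwise be inserted with $X$ contractible). The remaining hypothesis, $u_0 \smile \cdots \smile u_k \neq 0$, is exactly our assumption. Applying \Cref{prop: cohomological lower bound of m-secat} to $e_X$ then yields $\sctm(e_X) \geq k+1$, and combining this with $\mcat(X) = \sctm(e_X)$ gives $\mcat(X) \geq k+1$.

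Since each step is a direct appeal to an earlier result, I do not expect a genuine obstacle; the only thing worth a second glance is the vanishing $e_X^*(u_i) = 0$, which is purely the acyclicity of $P_{x_0}X$ in positive degrees. Alternatively, one could argue directly, imitating the proof of \Cref{prop: cohomological lower bound of m-secat} with property $B_m$ in place of property $A_{m,p}$: on an open set $U$ satisfying $B_m$, the composite $\iota_U \circ h$ of a CW approximation (or of its $m$-skeleton, handling $\dim > m$ via injectivity of restriction in low degrees) is nullhomotopic, hence $\iota_U^*(u_i) = 0$, and then the usual relative-cup-product argument over the cover forces $u_0 \smile \cdots \smile u_k = 0$, a contradiction. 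The reduction via \Cref{cor: m-cat = m-secat(e_X)} is shorter and is the route I would take.
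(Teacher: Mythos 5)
Your proposal is correct and follows exactly the paper's own argument: reduce via \Cref{cor: m-cat = m-secat(e_X)} to $\sctm(e_X)$, use contractibility of $P_{x_0}X$ to get $e_X^*(u_i)=0$, and apply \Cref{prop: cohomological lower bound of m-secat}. The extra remark about positive degrees and the sketched direct alternative are fine but not needed.
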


\begin{proof}
By \Cref{cor: m-cat = m-secat(e_X)}, it is enough to show that $\sctm(e_X) \geq k+1$. 
Since $P_{x_0}X$ is contractible, it follows that $e_X^{*}(u_0) = e_X^{*}(u_1) = \cdots = e_X^{*}(u_k) = 0$. 
Hence, the result follows from \Cref{prop: cohomological lower bound of m-secat}.
\end{proof}

\begin{proposition}
\label{prop: m-cat = cat if X is ''aspherical''}
Suppose $X$ is a path-connected CW complex such that $\pi_{k+1}(X) = 0$ for all $k \geq m$, then 
$$
    \mcat(X) = \ct(X).
$$
In particular, if $X$ is aspherical, then $\mcat(X) = \ct(X)$ for all $m$.
\end{proposition}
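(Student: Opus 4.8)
The plan is to establish the two inequalities $\mcat(X) \le \ct(X)$ and $\ct(X) \le \mcat(X)$ separately, using the hypothesis on the homotopy groups only for the latter. The first inequality is already available as \Cref{prop: monotonicity of m-cat} (2), so the real content is the reverse inequality $\ct(X) \le \mcat(X)$ under the assumption $\pi_{k+1}(X) = 0$ for all $k \ge m$.

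For the reverse direction, I would use \Cref{cor: m-cat = m-secat(e_X)} to translate the statement into the language of sectional category: $\mcat(X) = \sctm(e_X)$ where $e_X \colon P_{x_0}X \to X$ is the based path space fibration, and similarly $\ct(X) = \sct(e_X)$. Thus it suffices to show $\sct(e_X) \le \sctm(e_X)$. The fibre of $e_X$ is the based loop space $\Omega X$, and one has $\pi_k(\Omega X) \cong \pi_{k+1}(X)$ for all $k$. Hence the hypothesis $\pi_{k+1}(X) = 0$ for all $k \ge m$ says exactly that $\pi_k(\Omega X) = 0$ for all $k \ge m$, which is precisely the hypothesis of \Cref{prop: m-secat = secat if F is ''apsherical''} applied to the fibration $\Omega X \hookrightarrow P_{x_0}X \xrightarrow{e_X} X$ (noting $X$ is a CW complex). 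That proposition then yields $\sctm(e_X) = \sct(e_X)$, i.e. $\mcat(X) = \ct(X)$.

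The ``in particular'' clause is immediate: if $X$ is aspherical then $\pi_{k+1}(X) = 0$ for all $k \ge 1$, so the hypothesis holds for every $m \ge 1$ (and for $m = 0$ the statement $\mcat(X) = \ct(X)$ may need a separate trivial check, or one simply restricts to $m \ge 1$ as is implicit throughout), giving $\mcat(X) = \ct(X)$ for all $m$.

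I do not anticipate a serious obstacle here, since the argument is essentially an identification of fibres plus a citation. The one point requiring a little care is ensuring that $X$ being a path-connected CW complex suffices for $e_X$ to be a fibration with CW base, and that the shift $\pi_k(\Omega X) \cong \pi_{k+1}(X)$ is applied with the correct indexing; but both are standard and require no computation.
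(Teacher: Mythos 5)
Your proposal is correct and follows essentially the same route as the paper: translate via \Cref{cor: m-cat = m-secat(e_X)} and $\ct(X)=\sct(e_X)$, identify the fibre of $e_X$ as $\Omega X$ with $\pi_k(\Omega X)=\pi_{k+1}(X)=0$ for $k\ge m$, and apply \Cref{prop: m-secat = secat if F is ''apsherical''}.
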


\begin{proof}
By \Cref{cor: m-cat = m-secat(e_X)} and the fact $\ct(X) = \sct(e_X)$, it is enough to show that $\sctm(e_X) = \sct(e_X)$. 
As the the fibre $\Omega X$ of $e_X$ satisfies $\pi_{k}(\Omega X) = \pi_{k+1}(X) = 0$ for all $k \geq m$, the desired equality follows from \Cref{prop: m-secat = secat if F is ''apsherical''}.
\end{proof}

The following proposition follows from \Cref{prop: secat leq m-secat + [k/m+1]} and \Cref{cor: m-cat = m-secat(e_X)}.

\begin{proposition}
\label{prop: cat leq mcat  + [k/m+1]}
Suppose $X$ is a path-connected $k$-dimensional CW complex. 
Then
$$
    \ct(X) \leq \mcat(X) + \left[\frac{k}{m+1}\right], 
        \quad \text{and} \quad
    \ct(X) \leq \max\{\ct_{k-1}(X),2\},
$$
where $[w]$ denotes the greatest integer less than or equal to $w$.
\end{proposition}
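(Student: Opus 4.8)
The plan is to deduce this directly from the corresponding statement for the $m$-sectional category, namely \Cref{prop: secat leq m-secat + [k/m+1]}, via the identification $\ct(X) = \sct(e_X)$ and $\mcat(X) = \sctm(e_X)$ provided by \Cref{cor: m-cat = m-secat(e_X)}. First I would recall that for a path-connected CW complex $X$ with chosen basepoint $x_0$, the path space fibration $e_X \colon P_{x_0}X \to X$ has base space $X$, which is a $k$-dimensional CW complex by hypothesis. Then I would simply apply \Cref{prop: secat leq m-secat + [k/m+1]} to $e_X$: since the base $X$ is $k$-dimensional, we obtain
\[
    \sct(e_X) \leq \sctm(e_X) + \left[\frac{k}{m+1}\right]
        \quad \text{and} \quad
    \sct(e_X) \leq \max\{\sct_{k-1}(e_X), 2\}.
\]
Translating via $\ct(X) = \sct(e_X)$, $\mcat(X) = \sctm(e_X)$, and $\ct_{k-1}(X) = \sct_{k-1}(e_X)$ gives exactly the two claimed inequalities.

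The only mild subtlety is that \Cref{prop: secat leq m-secat + [k/m+1]} as stated requires the \emph{base} of the fibration to be a $k$-dimensional CW complex, and here the base is literally $X$ itself, so there is nothing to adjust — unlike in \Cref{cor: secat leq m-secat + [hdim/m+1]}, where one first had to replace the base by a homotopy equivalent finite-dimensional CW complex and invoke fiber-preserving homotopy invariance. So I would not need \Cref{prop: fiber-preserving homotopy invariance} or \Cref{prop: m-secat under pullback} here; the argument is a one-line substitution. I should also note that the equality $\ct(X) = \sct(e_X)$ is the standard characterization of LS category recalled in the introduction, and $\ct_j(X) = \sct_j(e_X)$ for every $j$ follows from \Cref{cor: m-cat = m-secat(e_X)} applied with $j$ in place of $m$.

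There is essentially no obstacle; the proof is a direct citation. The one point to be careful about in writing it up is to make sure the hypothesis ``$X$ is a path-connected $k$-dimensional CW complex'' matches the hypotheses of both \Cref{cor: m-cat = m-secat(e_X)} (path-connected) and \Cref{prop: secat leq m-secat + [k/m+1]} ($k$-dimensional CW base), which it does. Hence the write-up is:

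\begin{proof}
By \Cref{cor: m-cat = m-secat(e_X)} we have $\ct(X) = \sct(e_X)$, $\mcat(X) = \sctm(e_X)$, and more generally $\ct_j(X) = \sct_j(e_X)$ for all $j$; here $e_X \colon P_{x_0}X \to X$ is the path space fibration, whose base $X$ is a $k$-dimensional CW complex. Applying \Cref{prop: secat leq m-secat + [k/m+1]} to the fibration $e_X$ yields
\[
    \sct(e_X) \leq \sctm(e_X) + \left[\frac{k}{m+1}\right]
        \quad \text{and} \quad
    \sct(e_X) \leq \max\{\sct_{k-1}(e_X), 2\}.
\]
Rewriting both inequalities in terms of $\ct$, $\mcat$, and $\ct_{k-1}$ gives the claim.
\end{proof}
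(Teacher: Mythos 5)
Your proposal is correct and is exactly the argument the paper intends: it derives the statement by applying \Cref{prop: secat leq m-secat + [k/m+1]} to the path space fibration $e_X$ (whose base is the $k$-dimensional CW complex $X$ itself) and translating via \Cref{cor: m-cat = m-secat(e_X)} and the identity $\ct(X) = \sct(e_X)$. Nothing further is needed.
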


Using similar arguments to those in \Cref{cor: secat leq m-secat + [hdim/m+1]}, together with previous proposition and the invariance of $m$-category proved later in \Cref{cor: invariance of m-cat}, we get the following result.

\begin{corollary}
\label{cor: cat leq mcat + [hdim/m+1]}
Suppose $X$ is a path-connected finite dimensional CW complex. 
Then
$$
    \ct(X) \leq \mcat(X) + \left[\frac{\hdim(X)}{m+1}\right], 
$$
where $[w]$ denotes the greatest integer less than or equal to $w$.
In particular, 
$$
    \mcat(X) = \ct(X)
$$ 
for all $m\geq \hdim(X)$.   
\end{corollary}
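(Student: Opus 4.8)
The plan is to transfer \Cref{prop: cat leq mcat + [k/m+1]} from a CW complex realizing the homotopy dimension of $X$ back to $X$ itself, exactly in the spirit of the proof of \Cref{cor: secat leq m-secat + [hdim/m+1]}. First I would choose a CW complex $X'$ with $\dim(X') = \hdim(X)$ together with a homotopy equivalence $h \colon X' \to X$; such an $X'$ exists by the definition of homotopy dimension, and it is path-connected since $X$ is. Applying \Cref{prop: cat leq mcat + [k/m+1]} to $X'$ with $k = \dim(X') = \hdim(X)$ gives
$$
    \ct(X') \leq \mcat(X') + \left[\frac{\hdim(X)}{m+1}\right].
$$

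Next I would invoke homotopy invariance on both sides. The classical Lusternik--Schnirelmann category is a homotopy invariant, so $\ct(X') = \ct(X)$; and by \Cref{cor: invariance of m-cat}, the $m$-category is also a homotopy invariant, so $\mcat(X') = \mcat(X)$. Substituting these equalities into the displayed inequality yields $\ct(X) \leq \mcat(X) + \left[\hdim(X)/(m+1)\right]$, which is the first assertion.

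For the ``in particular'' clause, suppose $m \geq \hdim(X)$. Then $\hdim(X) < m+1$, so $\left[\hdim(X)/(m+1)\right] = 0$, and the inequality just proved reduces to $\ct(X) \leq \mcat(X)$. The reverse inequality $\mcat(X) \leq \ct(X)$ is \Cref{prop: monotonicity of m-cat} (2). Combining the two gives $\mcat(X) = \ct(X)$.

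I do not expect a genuine obstacle here: the argument is a routine transfer along a homotopy equivalence. The only points requiring a little care are that $\hdim(X)$ is actually realized by a finite-dimensional CW complex (true by definition) and that $\mcat$ is known to be a homotopy invariant — for the latter one must be sure that \Cref{cor: invariance of m-cat}, although stated later in the paper, does not itself depend on the present corollary, which it does not.
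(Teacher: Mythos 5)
Your proof is correct and follows essentially the same route the paper indicates: transfer \Cref{prop: cat leq mcat + [k/m+1]} along a homotopy equivalence to a CW complex realizing $\hdim(X)$, using the homotopy invariance of $\ct$ and of $\mcat$ (via \Cref{cor: invariance of m-cat}), and then combine with \Cref{prop: monotonicity of m-cat}~(2) for the equality case. Your remark about checking that \Cref{cor: invariance of m-cat} does not circularly depend on this corollary matches the paper's own caveat.
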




\begin{example}
\label{ex: m-cat of Moore spaces}
\normalfont{
Consider the Moore space $M(G,n)$, where $G$ is a finitely generated abelian group and $n\geq 2$. 
It is known that $\dim(M(G,n)) = n$ if $G$ is torsion-free, and $\dim(M(G,n)) = n+1$ otherwise. 
We note that $\mcat(M(G,n)) = 0$ for $m < n$, since it is a $(n-1)$-connected CW complex. 
From \Cref{cor: cat leq mcat + [hdim/m+1]}, we have $\mcat(M(G,n)) = \ct(M(G,n)) = 1$ for $m \geq n$ when $G$ is torsion-free; see \cite[Example 1.33]{CLOT} for the LS-category of Moore spaces. 
Similarly, if $G$ has torsion, then $\mcat(M(G,n)) = \ct(M(G,n)) = 1$ for $m\geq n+1$. 
However, $\mcat(M(G,n)) \neq 0$ for $m=n$, because the map $\iota \circ \phi \colon S^n \to M(G,n)$ is not null homotopic. 
Here $\iota \colon S^n \hookrightarrow M(G,n)$ is an inclusion map and $\phi \colon S^n\to S^n$ is the attaching map of a $(n+1)$-cell in $M(G,n)$, whose degree corresponds to the order of a summand of the torsion subgroup of $G$. 
Moreover, $M(G,n) = \Sigma M(G,n-1) = CM_U(G,n-1)\cup CM_L(G,n-1)$, where $CM_U(G,n-1)$ and $CM_L(G,n-1)$ denote the open upper and lower cones in the suspension $\Sigma M(G,n-1)$, respectively. 
This implies $\mcat(M(G,n))= 1 =\ct(M(G,n))$ for $m=n$. 
Hence,
$$
\mcat(M(G,n)) =
\begin{cases}
    0 & \text{ if } m<n,\\
    1 & \text{ if } m\geq n.
\end{cases}
$$
}
\end{example}

\begin{example} 
\label{ex: mcat of suspension}
\normalfont{
Consider the space $\Sigma X\times Y$, where $Y$ is a $n$-connected CW complex. 
Observe that $\Sigma X \times Y = (CX_U\times Y)\cup (CX_L\times Y),$ where $CX_U$ and $CX_L$ denote the open upper and lower cones in the suspension $\Sigma X$, respectively.
Then $\mcat(\Sigma X\times Y)\leq 1$ for all $m \leq n$. 
Indeed, by the cellular approximation theorem, every map $\phi_1\colon P\to CX_U\times Y$ and $\phi_2\colon P\to CX_L\times Y$ from an $m$-dimensional CW complex $P$ is null homotopic whenever $m\leq n$. 

Further, suppose that $X$ is $k$-connected CW complex, where $1 \leq k < n-1$.
Since $\Sigma X$ is $(k+1)$ connected, it follows that $\Sigma X\times Y$ is $(k+1)$ connected. Thus, $\mcat(\Sigma X\times Y) = 0$ for all $m \leq k+1$.
Further, by \Cref{prop: cohomological lower bound on m-cat}, we obtain
$
    1 \leq \ct_{k+2}(\Sigma X\times Y)
        \leq \mcat(\Sigma X\times Y)
        \leq 1
$
for $k+1 < m\leq n$. 
Hence, $\mcat(\Sigma X\times Y) = 1$ for $k+1 < m \leq n$.

Moreover, if $H^i(Y;R)$ (or $H^i(X)$) is finitely generated free $R$-module for all $i$, then $H^*(\Sigma X \times Y;R)\cong H^*(\Sigma X;R)\otimes_R H^*(Y;R)$ by Künneth theorem. 
Therefore, using \Cref{prop: monotonicity of m-cat} and \Cref{prop: cohomological lower bound on m-cat}, we obtain 
$$
    2 \leq \ct_{n+1}(\Sigma X\times Y)
        \leq \mcat(\Sigma X\times Y)
        \leq \ct(\Sigma X\times Y), ~\text{ for all } m \geq n+1.
$$  
}
\end{example}

If $X$ and $Y$ are particular CW complexes of the above type, then we can obtain a complete classification of their $m$-category. 
This is illustrated in the following example.

\begin{example}
\label{ex: m-cat of product of spheres}
\normalfont{
Suppose $\overline{n}=(n_1, n_2, \dots, n_k) \in \mathbb{N}^k$, such that $n_1 < n_2< \dots<n_k.$ 
Let us denote by $S^{\overline{n}}$ the product of spheres $S^{n_1} \times S^{n_2} \times \dots \times S^{n_k}.$ 
Note that, by \Cref{ex: m-cat of Moore spaces}, we have $\mcat(S^{n_i}) = 0$ if $m < n_i$ and $\mcat(S^{n_i}) = 1$ otherwise.
Since $S^{\overline{n}}$ is $(n_1-1)$-connected, it follows that $\mcat(S^{\overline{n}}) = 0$ for $m < n_1$.
Suppose $a_j \in H^{n_j}(S^{\overline{n}};\mathbb{Q})$ denote the pullback of the fundamental class of $S^{n_j}$ under the projection map $S^{\overline{n}} \to S^{n_j}$.
Then 
$
    \prod_{j=1}^{i} a_j \in H^{*}(S^{\overline{n}}; \mathbb{Q})
$
is non zero.
Using \Cref{prop: cohomological lower bound on m-cat}, we see that $\mathrm{cat}_{n_i}(S^{\overline{n}}) \geq i$.
Hence, by \Cref{prop: monotonicity of m-cat} (1), it follows that $\mcat(S^{\overline{n}}) \geq i$ for all $m \geq n_i$.
On the other hand, by \Cref{prop: prod-ineq for m-cat}, it follows that 
$$
    \mcat(S^{\overline{n}}) 
        \leq \mcat(S^{n_1}) + \mcat(S^{n_2}) + \cdots + \mcat(S^{n_i})
        = i
$$ for $n_i \leq m < n_{i+1}$. Hence,
$$
\mcat(S^{\overline{n}})= 
\begin{cases}
    0 & \text{ if } m<n_1,\\
    i & \text{ if } n_i\leq m < n_{i+1},\\
    k & \text{ if } m \geq n_{k}.
    \end{cases}
$$
}
\end{example}

\section{$m$-homotopic distance}
\label{sec: mhomotopic distance}
In this section, we study the $m$-homotopic distance and show that it can be expressed as the $m$-dimensional sectional category of a specific fibration. 
We obtain cohomological and dimension-connectivity bounds for the $m$-homotopic distance. 
Furthermore, we introduce the notion of $m$-cohomological distance, generalizing the cohomological distance recently introduced in \cite{Coh-dist}, and show that it provides a lower bound for the $m$-homotopic distance.

\begin{definition}[{\cite[Definition 2.1]{hom-dist-bw-maps}}]
The homotopic distance between two maps $f,g \colon X\to Y$, denoted by $\mathrm{D}(f,g)$, is the smallest integer $k$ such that $X$ admits an open cover $\{U_i\}_{i=0}^{k}$ with the property that
$$
    \left.f\right|_{U_i} \simeq \left.g\right|_{U_i}
        \quad \text{for all }i.
$$ 
If no such open cover exists, we set $\mathrm{D}(f,g) := \infty$.   
\end{definition}

\begin{definition}[{\cite[Definition 2.8]{m-hom-dist}}]
The $m$-homotopic distance between two maps $f,g \colon X\to Y$, denoted by $\Dm(f,g)$, is the smallest integer $k$ such that $X$ admits an open cover $\{U_i\}_{i=0}^{k}$, where each $U_i$ has the following property: for any map $\phi \colon P\to U_i$ from an $m$-dimensional CW complex $P$, we have 
$$
    \left.f\right|_{U_i} \circ \phi 
        \simeq \left.g\right|_{U_i}\circ \phi.
$$ 
If no such open cover exists, we set $\Dm(f,g) := \infty$.
\end{definition}

The following statements follow directly from the definition.
\begin{enumerate}
\item $\Dm(f,g) = \Dm(g,f)$.
\item If $f \simeq  f'$ and $g\simeq  g'$, then $\Dm(f,g)=\Dm(f',g')$.
\end{enumerate}

\begin{property}
Suppose $f,g \colon X \to Y$ are continuous maps.
An open subset $U$ of $X$ will be said to satisfy property $D_{m,\,f,\,g}$ if, for every $m$-dimensional CW complex $P$ and every map $\phi \colon P \to U$, we have
$$
    \left.f\right|_{U} \circ \phi 
        \simeq \left.g\right|_{U}\circ \phi.
$$ 
An open cover $\mathcal{U}$ of $B$ will be said to satisfy property $D_{m,\,f,\,g}$ if every element of $\mathcal{U}$ satisfies property $D_{m,\,f,\,g}$.
\end{property}

The proof of the following corollary follows a similar argument to that of \Cref{prop: monotonicity of m-secat}.

\begin{proposition}
\label{prop: monotonicity of m-hom-dist}
Suppose $f,g \colon X \to Y$ are continuous maps.
Then an open subset of $X$ satisfies property $D_{m,\,f,\,g}$ if and only if it satisfies the property $D_{n,\,f,\,g}$ for all $n \leq m$.
In particular, 
$$
    \Dn(f,g) \leq \Dm(f,g)
$$
for all $n \leq m$.
\end{proposition}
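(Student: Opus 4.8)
The plan is to mimic the proof of \Cref{prop: monotonicity of m-secat} (1), which is flagged in the paper as the template for exactly this kind of monotonicity statement. First I would prove the equivalence of the two properties: if an open subset $U \subseteq X$ satisfies property $D_{m,\,f,\,g}$, then it automatically satisfies property $D_{n,\,f,\,g}$ for every $n \leq m$; the reverse implication is trivial since an $n$-dimensional CW complex with $n \leq m$ is in particular an $m$-dimensional CW complex. So the content is the forward direction.

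For the forward direction, suppose $U$ satisfies $D_{m,\,f,\,g}$ and let $\psi \colon Q \to U$ be a map from an $n$-dimensional CW complex $Q$ with $n \leq m$. Pick a basepoint $q_0 \in Q$ and form the $m$-dimensional CW complex $P := Q \vee S^m = (Q \sqcup D^m)/(q_0 \sim z \text{ for all } z \in S^{m-1})$, exactly as in \Cref{prop: monotonicity of m-secat}. Extend $\psi$ to a map $\phi \colon P \to U$ by sending the wedged $S^m$ to the constant value $\psi(q_0)$. Since $P$ is $m$-dimensional, property $D_{m,\,f,\,g}$ gives a homotopy $\left.f\right|_U \circ \phi \simeq \left.g\right|_U \circ \phi$. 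Restricting this homotopy to the subcomplex $Q \subseteq P$ (on which $\phi$ agrees with $\psi$ after including into $U$) yields $\left.f\right|_U \circ \psi \simeq \left.g\right|_U \circ \psi$, so $U$ satisfies $D_{n,\,f,\,g}$. The ``in particular'' statement then follows immediately: if $\{U_i\}_{i=0}^k$ is an open cover of $X$ realizing $\Dm(f,g) \leq k$, i.e.\ satisfying property $D_{m,\,f,\,g}$, then the same cover satisfies $D_{n,\,f,\,g}$, so $\Dn(f,g) \leq k$.

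I do not anticipate a genuine obstacle here, since the argument is essentially identical in structure to \Cref{prop: monotonicity of m-secat}; indeed the paper itself says the proof ``follows a similar argument.'' The only point requiring a little care is making sure the wedge construction $Q \vee S^m$ is genuinely $m$-dimensional when $Q$ has dimension $n < m$ (it is, because the attached cell is an $m$-cell) and that restricting a homotopy on $P$ to the subcomplex $Q$ is legitimate (it is, since $Q$ is a subcomplex of $P$ and $\phi|_Q = \iota_U \circ \psi$ by construction). One could alternatively phrase the whole thing without the wedge trick, simply observing that any $n$-dimensional CW complex is an $m$-dimensional CW complex for $n \le m$ — but since the paper's established style uses the wedge construction, I would follow suit for consistency.
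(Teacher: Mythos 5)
Your proposal is correct and follows exactly the argument the paper intends: the paper itself omits the proof and points to \Cref{prop: monotonicity of m-secat}, whose wedge construction $P = Q \vee S^m$ you reproduce faithfully, with the homotopy $\left.f\right|_U \circ \phi \simeq \left.g\right|_U \circ \phi$ restricting to the subcomplex $Q$ in place of restricting a section. No gaps.
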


First, we establish the following equality that relates the $m$-homotopic distance and the $m$-sectional category. 
This is the $m$-category analogue of \cite[Theorem 2.7]{hom-dist-bw-maps}.

\begin{theorem}
\label{thm: m-hom-dist(f:g) = m-secat(Pi_1)}
Let $f,g \colon X \to Y$ be continuous maps with the following pullback diagram 
\begin{equation}
\label{diag: m-hom-dist(f:g) = m-secat(Pi_1)}
\begin{tikzcd}
{\mathcal{P}(f,g)} \arrow[d, "\Pi_1"'] \arrow[r, "\Pi_2"]   & Y^{I} \arrow[d, "\pi_Y"] \\
X \arrow[r, "{(f,g)}"]                                    & Y \times Y              
\end{tikzcd}
\end{equation}
Then $\Dm(f,g) = \sctm(\Pi_1)$. 
\end{theorem}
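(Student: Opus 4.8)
The plan is to show that an open subset $U \subseteq X$ satisfies property $D_{m,f,g}$ if and only if it satisfies property $A_{m,\Pi_1}$, where $\Pi_1 \colon \mathcal{P}(f,g) \to X$ is the fibration from the pullback diagram \eqref{diag: m-hom-dist(f:g) = m-secat(Pi_1)}; the equality $\Dm(f,g) = \sctm(\Pi_1)$ is then immediate from the definitions. Recall that $\mathcal{P}(f,g) = \{(x,\gamma) \in X \times Y^I : \gamma(0) = f(x),\ \gamma(1) = g(x)\}$, with $\Pi_1(x,\gamma) = x$. The key observation driving the whole argument is the standard one: a map $\sigma \colon P \to \mathcal{P}(f,g)$ with $\Pi_1 \circ \sigma = \iota_U \circ \phi$ is, by the universal property of the pullback, precisely the data of a map $P \to Y^I$, i.e.\ a homotopy $H \colon P \times I \to Y$, whose endpoints are $H_0 = f \circ \iota_U \circ \phi$ and $H_1 = g \circ \iota_U \circ \phi$. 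In other words, lifting $\iota_U \circ \phi$ through $\Pi_1$ is exactly the same as exhibiting a homotopy between $f|_U \circ \phi$ and $g|_U \circ \phi$.

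First I would spell out the forward direction: assume $U$ satisfies $A_{m,\Pi_1}$. Given an $m$-dimensional CW complex $P$ and a map $\phi \colon P \to U$, property $A_{m,\Pi_1}$ produces $s \colon P \to \mathcal{P}(f,g)$ with $\Pi_1 \circ s = \iota_U \circ \phi$. Post-composing with $\Pi_2 \colon \mathcal{P}(f,g) \to Y^I$ and using the adjunction $\mathrm{Map}(P, Y^I) \cong \mathrm{Map}(P \times I, Y)$, we get a homotopy $H \colon P \times I \to Y$; commutativity of \eqref{diag: m-hom-dist(f:g) = m-secat(Pi_1)} gives $\pi_Y \circ \Pi_2 \circ s = (f,g) \circ \iota_U \circ \phi$, which unwinds to $H_0 = f|_U \circ \phi$ and $H_1 = g|_U \circ \phi$. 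Hence $U$ satisfies $D_{m,f,g}$.

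For the converse, assume $U$ satisfies $D_{m,f,g}$. Given $\phi \colon P \to U$ with $P$ an $m$-dimensional CW complex, there is a homotopy $H \colon P \times I \to Y$ from $f|_U \circ \phi$ to $g|_U \circ \phi$; its adjoint is a map $\widetilde{H} \colon P \to Y^I$ with $\pi_Y \circ \widetilde{H} = (f,g) \circ \iota_U \circ \phi$. The pair $(\iota_U \circ \phi,\ \widetilde{H})$ now satisfies the compatibility condition for the pullback square, so the universal property of $\mathcal{P}(f,g)$ yields a (unique) map $s \colon P \to \mathcal{P}(f,g)$ with $\Pi_1 \circ s = \iota_U \circ \phi$ (and $\Pi_2 \circ s = \widetilde{H}$). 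Thus $U$ satisfies $A_{m,\Pi_1}$. Since the two properties coincide on open subsets of $X$, an open cover of $X$ satisfies $D_{m,f,g}$ if and only if it satisfies $A_{m,\Pi_1}$, and taking the least size of such a cover minus one gives $\Dm(f,g) = \sctm(\Pi_1)$.

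I do not expect any serious obstacle here; the argument is a direct translation of the classical identification $\mathrm{D}(f,g) = \sct(\Pi_1)$ from \cite[Theorem 2.7]{hom-dist-bw-maps} to the $m$-dimensional setting, and the only thing to be careful about is the bookkeeping of the path-space adjunction and the endpoint conditions — in particular that $\Pi_2$ lands in $Y^I$ and that $\Pi_1$ is a fibration (so that $\sctm(\Pi_1)$ is defined), which holds because it is a pullback of the fibration $\pi_Y$. The mild subtlety worth a sentence is that we never need $\phi$ to be anything more than continuous: the CW hypothesis on $P$ plays no role in this particular equivalence, it is simply carried along because it is built into both definitions.
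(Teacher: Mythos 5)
Your proposal is correct and follows essentially the same route as the paper: both directions are handled by the same translation, via the exponential adjunction and the universal property of the pullback, between lifts of $\iota_U \circ \phi$ through $\Pi_1$ and homotopies from $f|_U \circ \phi$ to $g|_U \circ \phi$. No gaps; your closing remark that the CW hypothesis on $P$ is merely carried along is accurate.
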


\begin{proof}
Let $U$ be an open subset of $X$ that satisfies property $A_m$ with respect to the fibration $\Pi_1$.
Suppose $\phi \colon P \to U$ is a map, where $P$ is an $m$-dimensional CW complex.
Then there exists a map $s \colon P \to \mathcal{P}(f,g)$ such that $\Pi_1 \circ s = \iota_U \circ \phi$. 
Define a homotopy $H \colon P \times I \to Y$ by $H(x,t) :=(\Pi_2 \circ s(x))(t)$. 
Note we have the following commutative diagram
\[
\begin{tikzcd}
&                                & {\mathcal{P}(f,g)} \arrow[d, "\Pi_1"'] \arrow[r, "\Pi_2"] & Y^{I} \arrow[d, "\pi_Y"] \\
P \arrow[r, "\phi"] \arrow[rru, "s", dotted, shift left] \arrow[rrru, "\widetilde{H}", bend left=49] & U \arrow[r, "\iota_{U}", hook] & X \arrow[r, "{(f,g)}"]                                    & Y \times Y              
\end{tikzcd}
\]
where $\widetilde{H} \colon P \to Y^{I}$ is the map given by $\widetilde{H}(x)(t) = H(x,t)$.
Then the the following sequence of equalities 
$$
    \pi_Y \circ \Pi_2 \circ s 
        = (f,g) \circ \Pi_1 \circ s
        = (f,g) \circ \iota_U \circ \phi
        = (f \circ \iota_U \circ \phi,\, g \circ \iota_U \circ \phi)
        = (\left.f\right|_{U} \circ \phi, \,\left.g\right|_{U} \circ \phi)
$$
implies that $H(x,0) = (\mathrm{pr}_1 \circ \pi_Y \circ \Pi_2 \circ s)(x) = (\left.f\right|_{U} \circ \phi)(x)$ and $H(x,1) = (\mathrm{pr}_2 \circ \pi_Y \circ \Pi_2 \circ s)(x) = (\left.g\right|_{U} \circ \phi)(x)$, where $\mathrm{pr}_i \colon Y \times Y \to Y$ denotes the projection map onto the $i$th factor.
Therefore, $H$ defines a homotopy between $\left.f\right|_{U} \circ \phi$ and $\left.g\right|_{U} \circ \phi$, i.e, $U$ satisfies property $D_{m,\,f,\,g}$.
Hence, $\Dm(f,g) \leq \sctm(\Pi_1)$.

To establish the reverse inequality, consider an open subset $U$ of $X$ that satisfies property $D_{m,\,f,\,g}$.
Suppose $\phi \colon P \to U$ is a map, where $P$ is an $m$-dimensional CW complex.
Then we have a homotopy $H \colon P \times I \to Y$ such that $H_0 = \left.f\right|_U \circ \phi$ and $H_1 = \left.g\right|_U \circ \phi$.  
Let $\widetilde{H} \colon P \to Y^{I}$ be the map given by $\widetilde{H}(x)(t) = H(x,t)$.
Then $\pi_Y \circ \widetilde{H} = (\left.f\right|_U \circ \phi,\,\left.g\right|_U \circ \phi)$. 
Therefore, by the universal property of pullbacks, there exists a map $s \colon P\to \mathcal{P}(f,g)$ satisfying $\Pi_1 \circ s = \iota_U \circ \phi$ and $\Pi_2 \circ s = \widetilde{H}$. 
Therefore, $U$ satisfies property $A_{m}$ with respect to the fibration $\Pi_1$.
Hence, $\sctm(\Pi_1) \leq \Dm(f,g)$.
\end{proof}

We now present some applications of the above theorem.
First, observe that the monotonicity of the $m$-homotopic distance follows directly from \Cref{prop: monotonicity of m-secat} and \Cref{thm: m-hom-dist(f:g) = m-secat(Pi_1)}.
The following result is the $m$-category analogue of \cite[Corollary 3.8]{hom-dist-bw-maps}.

\begin{corollary}
\label{cor: m-hom-dist(f:g) leq m-cat(X)}
Let $f,g \colon X \to Y$ be continuous maps, where $Y$ is path-connected.
Then
$$
    \Dm(f,g) \leq \mcat(X).
$$
\end{corollary}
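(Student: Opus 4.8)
The plan is to combine the identification $\Dm(f,g) = \sctm(\Pi_1)$ from \Cref{thm: m-hom-dist(f:g) = m-secat(Pi_1)} with the bound $\sctm(p) \le \mcat(B)$ for surjective fibrations established in \Cref{cor: m-secat leq m-cat} (1). The whole argument is a short chain of implications, so the real content is checking that the hypotheses of \Cref{cor: m-secat leq m-cat} (1) are met for the fibration $\Pi_1$.

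First I would recall that $\Pi_1 \colon \mathcal{P}(f,g) \to X$ is a fibration, being the pullback of the free path-space fibration $\pi_Y \colon Y^I \to Y \times Y$ along $(f,g)$, as displayed in diagram \eqref{diag: m-hom-dist(f:g) = m-secat(Pi_1)}. Next comes the one place where the hypothesis is used: I would verify that $\Pi_1$ is surjective. Explicitly, $\mathcal{P}(f,g) = \{(x,\gamma) \in X \times Y^I \mid \gamma(0) = f(x),\ \gamma(1) = g(x)\}$ with $\Pi_1(x,\gamma) = x$; given $x \in X$, path-connectedness of $Y$ supplies a path $\gamma$ in $Y$ from $f(x)$ to $g(x)$, and then $(x,\gamma)$ is a preimage of $x$. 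Hence $\Pi_1$ is a surjective fibration over $X$.

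Finally, applying \Cref{cor: m-secat leq m-cat} (1) to $\Pi_1$ gives $\sctm(\Pi_1) \le \mcat(X)$, and \Cref{thm: m-hom-dist(f:g) = m-secat(Pi_1)} turns this into $\Dm(f,g) = \sctm(\Pi_1) \le \mcat(X)$, as claimed. I do not anticipate any genuine obstacle here: the case $\mcat(X) = \infty$ makes the inequality vacuous, and the only subtlety worth flagging explicitly is that path-connectedness of $Y$ is precisely the property that guarantees surjectivity of $\Pi_1$ (and hence that \Cref{cor: m-secat leq m-cat} applies).
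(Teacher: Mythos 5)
Your proposal is correct and matches the paper's own proof: both identify $\Dm(f,g)=\sctm(\Pi_1)$ via \Cref{thm: m-hom-dist(f:g) = m-secat(Pi_1)}, use path-connectedness of $Y$ to see that $\Pi_1$ is a surjective fibration, and then apply \Cref{cor: m-secat leq m-cat} (1). The only cosmetic difference is that you check surjectivity of $\Pi_1$ directly, while the paper deduces it from surjectivity of $\pi_Y$ under pullback.
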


\begin{proof}
As $Y$ is path-connected, it follows that $\pi_Y$ is a surjective fibration.
Hence, the fibration $\Pi_1$ in \Cref{thm: m-hom-dist(f:g) = m-secat(Pi_1)} is surjective.
Thus, the desired inequality 
$
    \Dm(f,g) 
        = \sctm(\Pi_1) 
        \leq \mcat(X)
$
follows from \Cref{thm: m-hom-dist(f:g) = m-secat(Pi_1)} and \Cref{cor: m-secat leq m-cat} (1).
\end{proof}

We note that the assumption of path-connectedness on $Y$ can be omitted in the above corollary, see \cite[Lemma 2.10]{m-hom-dist}. 
The following corollary recovers \cite[Proposition 2.12]{m-hom-dist}.

\begin{corollary}
\label{cor: m-hom-dist(*:id) = m-cat(X)}
For a path-connected topological space $X$, we have 
$$
    \Dm(c_{x_0},\id_X) = \mcat(X),
$$
where $c_{x_0} \colon X \to X$ denotes the constant map that takes the value $x_0 \in X$.
\end{corollary}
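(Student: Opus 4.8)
The plan is to derive this as a direct consequence of \Cref{cor: m-hom-dist(f:g) leq m-cat(X)} together with a matching lower bound coming from the fact that $\id_X$ is a homotopy equivalence. For the upper bound, observe that $X$ is path-connected, so \Cref{cor: m-hom-dist(f:g) leq m-cat(X)} applied to the pair $f = c_{x_0}$ and $g = \id_X$ immediately gives $\Dm(c_{x_0},\id_X) \leq \mcat(X)$.

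For the reverse inequality, I would argue that any open set $U \subseteq X$ satisfying property $D_{m,\,c_{x_0},\,g}$ (with $g = \id_X$) automatically satisfies property $B_m$. Indeed, suppose $\phi \colon P \to U$ is a map from an $m$-dimensional CW complex. Property $D_{m,\,c_{x_0},\,\id_X}$ says that $\left.c_{x_0}\right|_U \circ \phi \simeq \left.\id_X\right|_U \circ \phi$; but the left-hand side is the constant map at $x_0$, while the right-hand side is precisely $\iota_U \circ \phi$. Hence $\iota_U \circ \phi$ is nullhomotopic, which is exactly property $B_m$. Therefore an open cover of $X$ realizing $\Dm(c_{x_0},\id_X)$ is an open cover realizing $\mcat(X)$, so $\mcat(X) \leq \Dm(c_{x_0},\id_X)$. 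Combining the two inequalities yields the equality.

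Alternatively, and perhaps more in the spirit of the section, one can route the lower bound through \Cref{thm: m-hom-dist(f:g) = m-secat(Pi_1)}: the pullback $\Pi_1 \colon \mathcal{P}(c_{x_0},\id_X) \to X$ of $\pi_X \colon X^I \to X \times X$ along $(c_{x_0},\id_X)$ is, up to fibre-preserving homotopy equivalence, the based path space fibration $e_X \colon P_{x_0}X \to X$ (a path from $x_0$ to $\gamma(1)$ is the same data as a path ending at a prescribed point of $X$ starting at $x_0$). Then $\Dm(c_{x_0},\id_X) = \sctm(\Pi_1) = \sctm(e_X) = \mcat(X)$ by \Cref{cor: m-cat = m-secat(e_X)}. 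I expect the first, direct route to be cleaner to write, since it avoids having to carefully identify the pullback fibration with $e_X$.

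There is no serious obstacle here; the only mild subtlety is making the identification $\left.c_{x_0}\right|_U \circ \phi = c_{x_0}$ explicit and noting that nullhomotopy of $\iota_U \circ \phi$ for all such $\phi$ is literally the defining condition of property $B_m$, so that the covers transfer between the two invariants without loss. Since the statement is asserted to "recover \cite[Proposition 2.12]{m-hom-dist}", it suffices to record this short argument.
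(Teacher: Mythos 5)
Your proposal is correct. Your first, ``direct'' route is genuinely different from the paper's proof, while your second, ``alternative'' route is exactly what the paper does: it observes that the pullback of $\pi_X \colon X^I \to X\times X$ along $(c_{x_0},\id_X)$ is the based path fibration $e_X \colon P_{x_0}X \to X$, and then concludes $\Dm(c_{x_0},\id_X)=\sctm(\Pi_1)=\sctm(e_X)=\mcat(X)$ from \Cref{thm: m-hom-dist(f:g) = m-secat(Pi_1)} and \Cref{cor: m-cat = m-secat(e_X)} --- so the identification you worried about is indeed the only content of the paper's argument. Your direct route instead compares the defining open covers: an open set $U$ satisfying property $D_{m,\,c_{x_0},\,\id_X}$ satisfies property $B_m$ because $\left.c_{x_0}\right|_U\circ\phi$ is constant and $\left.\id_X\right|_U\circ\phi=\iota_U\circ\phi$; this gives $\mcat(X)\le \Dm(c_{x_0},\id_X)$, and the reverse inequality comes from \Cref{cor: m-hom-dist(f:g) leq m-cat(X)}. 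This is sound, and in fact you could make it entirely self-contained: for path-connected $X$ the two properties are equivalent, since if $\iota_U\circ\phi$ is nullhomotopic it is homotopic to a constant map which, by path-connectedness, can be slid to the constant at $x_0$, giving $\left.\id_X\right|_U\circ\phi\simeq \left.c_{x_0}\right|_U\circ\phi$. What each approach buys: your cover-level argument is elementary and avoids the fibration machinery altogether, whereas the paper's pullback argument fits the section's program of reducing everything to $\sctm$ of a fibration and reuses results already in place. Both are valid proofs.
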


\begin{proof}
Note that the following diagram is a pullback
\[
\begin{tikzcd}
P_{x_0}X \arrow[r, hook] \arrow[d, "e_X"']  & X^I \arrow[d, "\pi_X"] \\
X \arrow[r, "{(c_{x_0},\mathrm{Id}_X )}"]             & {X \times X\,.}       
\end{tikzcd}
\]
Hence, the desired equality 
$
    \Dm(c_{x_0}, \id_X) = \sctm(e_X) = \mcat(X)
$
follows from \Cref{thm: m-hom-dist(f:g) = m-secat(Pi_1)} and \Cref{cor: m-cat = m-secat(e_X)}.
\end{proof}


Now we recover the product inequality \cite[Theorem 3.1]{m-hom-dist}.

\begin{corollary}
Suppose $f,g \colon X \to Y$ and $f', g' \colon X' \to Y'$ are maps, where $X$ and $X'$ are normal topological spaces. 
Then
$$
    \Dm(f \times f',g \times g') \leq \Dm(f,g) + \Dm(f',g').
$$
\end{corollary}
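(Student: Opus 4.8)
The plan is to reduce the product inequality for $\Dm$ to the product inequality for $\sctm$ that was already established in \Cref{prop: prod-ineq for m-secat}, via the identification of $m$-homotopic distance with an $m$-sectional category given in \Cref{thm: m-hom-dist(f:g) = m-secat(Pi_1)}. By that theorem, $\Dm(f,g) = \sctm(\Pi_1)$, where $\Pi_1 \colon \mathcal{P}(f,g) \to X$ is the pullback of $\pi_Y \colon Y^I \to Y \times Y$ along $(f,g) \colon X \to Y \times Y$, and similarly $\Dm(f',g') = \sctm(\Pi_1')$ for the analogous fibration $\Pi_1' \colon \mathcal{P}(f',g') \to X'$. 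Likewise, $\Dm(f \times f', g \times g') = \sctm(\widetilde{\Pi}_1)$, where $\widetilde{\Pi}_1 \colon \mathcal{P}(f \times f', g \times g') \to X \times X'$ is the pullback of $\pi_{Y \times Y'}$ along $(f \times f', g \times g')$.

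The key step is to verify that $\widetilde{\Pi}_1$ is fibre-preserving homotopy equivalent to the product fibration $\Pi_1 \times \Pi_1' \colon \mathcal{P}(f,g) \times \mathcal{P}(f',g') \to X \times X'$. First I would recall that the path-space fibration commutes with products: there is a natural homeomorphism $(Y \times Y')^I \cong Y^I \times (Y')^I$, and under the reordering homeomorphism $(Y \times Y') \times (Y \times Y') \cong (Y \times Y) \times (Y' \times Y')$ the fibration $\pi_{Y \times Y'}$ is identified with $\pi_Y \times \pi_{Y'}$. Precomposing with $(f \times f', g \times g')$ and using that this map corresponds to $(f,g) \times (f',g')$ under the same reordering, the pullback defining $\widetilde{\Pi}_1$ becomes the pullback of $\pi_Y \times \pi_{Y'}$ along $(f,g) \times (f',g')$; since pullbacks commute with products, this is exactly $\Pi_1 \times \Pi_1'$ up to the reordering homeomorphism of the base. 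Hence $\widetilde{\Pi}_1$ and $\Pi_1 \times \Pi_1'$ are isomorphic as fibrations, in particular fibre-preserving homotopy equivalent, so by \Cref{prop: fiber-preserving homotopy invariance} they have the same $m$-sectional category.

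Putting these together, and using that $X$ and $X'$ are normal (so that $\Pi_1$ has normal base $X$, $\Pi_1'$ has normal base $X'$, and \Cref{prop: prod-ineq for m-secat} applies to $\Pi_1 \times \Pi_1'$), we obtain
\[
    \Dm(f \times f', g \times g')
        = \sctm(\widetilde{\Pi}_1)
        = \sctm(\Pi_1 \times \Pi_1')
        \leq \sctm(\Pi_1) + \sctm(\Pi_1')
        = \Dm(f,g) + \Dm(f',g').
\]
I expect the main obstacle to be the bookkeeping in the second step: carefully tracking the coordinate-reordering homeomorphisms and checking that the two pullback squares genuinely agree, so that the fibration $\widetilde{\Pi}_1$ is literally (isomorphic to) $\Pi_1 \times \Pi_1'$ rather than merely having the same total space. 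Once the identification of fibrations is pinned down, the rest is a direct invocation of \Cref{thm: m-hom-dist(f:g) = m-secat(Pi_1)}, \Cref{prop: fiber-preserving homotopy invariance}, and \Cref{prop: prod-ineq for m-secat}.
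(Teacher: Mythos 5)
Your proposal is correct and takes essentially the same route as the paper: the paper also takes the product of the two pullback squares defining $\Pi_1$ and $\Pi_1'$, uses the natural homeomorphisms identifying $\pi_{Y\times Y'}$ with $\pi_Y\times\pi_{Y'}$ (and $(f\times f',g\times g')$ with $(f,g)\times(f',g')$ after reordering coordinates), and then applies \Cref{thm: m-hom-dist(f:g) = m-secat(Pi_1)} together with \Cref{prop: prod-ineq for m-secat}. The coordinate-bookkeeping you flag as the main obstacle is exactly what the paper handles with its homeomorphisms $h$ and $\widetilde{h}$, so there is no gap.
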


\begin{proof}
Consider the product of pullback diagrams \eqref{diag: m-hom-dist(f:g) = m-secat(Pi_1)} corresponding to the maps $(f,g)$ and $(f',g')$
\[
\begin{tikzcd}
{\mathcal{P}(f,g) \times \mathcal{P}(f',g')} \arrow[d, "\Pi_1 \times \Pi_1'"'] \arrow[rr, "\Pi_2 \times \Pi_2'"] \arrow[drr, phantom, "\square"] &  & Y^{I} \times (Y')^{I} \arrow[d, "\pi_Y \times \pi_{Y'}"] \arrow[r, "\widetilde{h}"] & (Y \times Y')^I \arrow[d, "\pi_{Y \times Y'}"] \\
X \times X' \arrow[rr, "{(f,g) \times (f',g')}"]                                                                 &  & (Y \times Y) \times (Y' \times Y') \arrow[r, "h"]                                   & {(Y \times Y') \times (Y \times Y')\,,}       
\end{tikzcd}
\]
where $\widetilde{h}$ and $h$ are natural homeomorphisms. 
Since
$$
    h \circ ((f,g) \times(f',g')) = (f \times f' , g \times g'),
$$
the desired inequality
$$
    \Dm(f \times f' , g \times g') 
        = \sctm(\Pi_1 \times \Pi_1')
        \leq \sctm(\Pi_1) \times \sctm(\Pi_1')
        = \Dm(f,g) + \Dm(f',g')
$$
follows from \Cref{thm: m-hom-dist(f:g) = m-secat(Pi_1)} and \Cref{prop: prod-ineq for m-secat}.
\end{proof}

\subsection{Properties}
\label{subsec: properties of m homotopic distance}

\begin{proposition}[{\cite[Proposition 2.9]{m-hom-dist}}]
\label{prop: m-hom-dist under composition}
Let $f,g \colon X \to Y$ be continuous maps. 
Then for continuous maps $h \colon Y \to Z$ and $k \colon Z \to X$, we have
\begin{enumerate}
    \item $\Dm(h \circ f, \, h \circ g) \;\leq\; \Dm(f,g)$.
    \item $\Dm(f \circ k, \, g \circ k) \;\leq\; \Dm(f,g)$.
    \item $\Dm(f,g) \leq \mathrm{D}(f,g)$.
\end{enumerate}
\end{proposition}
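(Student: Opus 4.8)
The plan is to prove each of the three inequalities directly from the definition, reusing the given open cover in (1) and (3) and a pulled-back cover in (2). For (1), suppose $\Dm(f,g) = r$ and fix an open cover $\{U_i\}_{i=0}^{r}$ of $X$ satisfying property $D_{m,\,f,\,g}$. I claim this very cover witnesses $\Dm(h\circ f,\,h\circ g) \le r$: given a map $\phi\colon P\to U_i$ from an $m$-dimensional CW complex $P$, property $D_{m,\,f,\,g}$ supplies a homotopy $f|_{U_i}\circ\phi \simeq g|_{U_i}\circ\phi$, and postcomposing this homotopy with $h$, together with the identity $(h\circ f)|_{U_i} = h\circ f|_{U_i}$, yields $(h\circ f)|_{U_i}\circ\phi \simeq (h\circ g)|_{U_i}\circ\phi$. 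Hence each $U_i$ satisfies $D_{m,\,h\circ f,\,h\circ g}$.

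For (2), again start from an open cover $\{U_i\}_{i=0}^{r}$ of $X$ realizing $\Dm(f,g)=r$, and set $V_i := k^{-1}(U_i)$, an open cover of $Z$. Let $\widetilde{k}\colon V_i\to U_i$ be the corestriction of $k$, so that $\iota_{U_i}\circ\widetilde{k} = k\circ\iota_{V_i}$. Given $\phi\colon P\to V_i$, apply property $D_{m,\,f,\,g}$ to the map $\widetilde{k}\circ\phi\colon P\to U_i$ to obtain $f|_{U_i}\circ(\widetilde{k}\circ\phi)\simeq g|_{U_i}\circ(\widetilde{k}\circ\phi)$; the computation $f\circ\iota_{U_i}\circ\widetilde{k} = f\circ k\circ\iota_{V_i} = (f\circ k)|_{V_i}$ (and the analogous one for $g$) then gives $(f\circ k)|_{V_i}\circ\phi\simeq (g\circ k)|_{V_i}\circ\phi$, so $\{V_i\}$ satisfies $D_{m,\,f\circ k,\,g\circ k}$ and $\Dm(f\circ k,\,g\circ k)\le r$.

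For (3), take a cover $\{U_i\}_{i=0}^{r}$ of $X$ with $f|_{U_i}\simeq g|_{U_i}$ realizing $\mathrm{D}(f,g)=r$. Postcomposing each such homotopy with an arbitrary map $\phi\colon P\to U_i$ from an $m$-dimensional CW complex immediately shows each $U_i$ satisfies $D_{m,\,f,\,g}$, so $\Dm(f,g)\le \mathrm{D}(f,g)$; equivalently, a genuine homotopy $f|_{U_i}\simeq g|_{U_i}$ is a stronger condition than property $D_{m,\,f,\,g}$ for every $m$.

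I expect no genuine obstacle here: every step reduces to post- or pre-composing homotopies, which is functorial. The only point demanding a little care is the bookkeeping with restriction and corestriction maps in part (2), in particular the compatibility $\iota_{U_i}\circ\widetilde{k} = k\circ\iota_{V_i}$ and the resulting identification of $(f\circ k)|_{V_i}$ with $f\circ\iota_{U_i}\circ\widetilde{k}$. One could alternatively deduce all three parts from \Cref{thm: m-hom-dist(f:g) = m-secat(Pi_1)} together with the pullback and homotopy-invariance properties of $\sctm$ in \Cref{prop: m-secat under pullback} and \Cref{prop: fiber-preserving homotopy invariance}, but the direct argument above is shorter and more transparent, so that is the route I would take.
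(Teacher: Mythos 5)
Your proof is correct. The paper itself does not prove this proposition --- it is quoted from \cite{m-hom-dist} (Proposition 2.9 there) --- and your direct verification from the definition of property $D_{m,\,f,\,g}$ is exactly the standard argument: post-compose the homotopy with $h$ for (1), pull back the cover along $k$ for (2), and restrict a genuine homotopy for (3). The only nitpick is in (3), where you should \emph{pre}-compose the homotopy $f|_{U_i}\simeq g|_{U_i}$ with $\phi\times \id_I$ rather than ``postcompose'' it with $\phi$; this is purely a wording slip and does not affect the argument.
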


\begin{proposition}[{\cite[Theorem 2.11]{m-hom-dist}}]
Suppose $x_0 \in X$ is a fixed point.
If $\iota_1, \iota_2 \colon X \to X \times X$ are the inclusions defined as $\iota_1(x) = (x,x_0)$ and $\iota_2(x) = (x_0,x)$, then
$$
    \Dm(\iota_1,\iota_2) = \mcat(X).
$$        
\end{proposition}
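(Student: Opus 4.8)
The plan is to show directly that an open subset of $X$ satisfies the property governing $\Dm(\iota_1,\iota_2)$ precisely when it satisfies property $B_m$, and then invoke \Cref{cor: m-hom-dist(*:id) = m-cat(X)}. The key point is that $\iota_1$ and $\iota_2$ have constant components: writing $c_{x_0}$ for the constant map at $x_0$, we have $\iota_1 = (\id_X, c_{x_0})$ and $\iota_2 = (c_{x_0}, \id_X)$ as maps $X \to X \times X$.

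First I would fix an open set $U \subseteq X$, an $m$-dimensional CW complex $P$, and a map $\phi \colon P \to U$, and rewrite the relevant maps as $\iota_1|_{U} \circ \phi = (\iota_U \circ \phi,\, c_{x_0})$ and $\iota_2|_{U} \circ \phi = (c_{x_0},\, \iota_U \circ \phi)$, where $c_{x_0} \colon P \to X$ is now the constant map. Since a homotopy of maps $P \to X \times X$ is the same datum as a pair of homotopies into the two factors, one has $\iota_1|_{U} \circ \phi \simeq \iota_2|_{U} \circ \phi$ if and only if $\iota_U \circ \phi \simeq c_{x_0}$, i.e. if and only if $\iota_U \circ \phi$ is nullhomotopic. (One direction: a nullhomotopy $H$ of $\iota_U \circ \phi$ yields $(H, c_{x_0})$ and $(c_{x_0}, H)$, joining both maps to $(c_{x_0},c_{x_0})$; conversely, the first component of any homotopy between the two maps is a nullhomotopy of $\iota_U\circ\phi$.) Consequently, $U$ satisfies property $D_{m,\,\iota_1,\,\iota_2}$ if and only if $U$ satisfies property $B_m$.

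Since these two properties single out the same open subsets of $X$, the smallest $k$ for which $X$ admits an open cover $\{U_i\}_{i=0}^{k}$ by sets of the first kind equals the smallest such $k$ for sets of the second kind, which is exactly the assertion $\Dm(\iota_1,\iota_2) = \mcat(X)$. (Equivalently, the computation above shows that $U$ satisfies $D_{m,\,\iota_1,\,\iota_2}$ if and only if it satisfies $D_{m,\,c_{x_0},\,\id_X}$, so $\Dm(\iota_1,\iota_2) = \Dm(c_{x_0},\id_X)$, and the latter equals $\mcat(X)$ by \Cref{cor: m-hom-dist(*:id) = m-cat(X)}.) I do not anticipate any genuine obstacle here; the only point needing (minimal) care is the standard fact that homotopies into a product are detected componentwise. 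A more conceptual route would be to apply \Cref{thm: m-hom-dist(f:g) = m-secat(Pi_1)} to write $\Dm(\iota_1,\iota_2) = \sctm(\Pi_1)$, identify the pullback $\Pi_1$ of $\pi_{X\times X}$ along $(\iota_1,\iota_2)$ with the based path fibration $e_X \colon P_{x_0}X \to X$ up to fibre-preserving homotopy equivalence, and then invoke \Cref{prop: fiber-preserving homotopy invariance} together with \Cref{cor: m-cat = m-secat(e_X)}; this works but is longer, so I would present the direct argument.
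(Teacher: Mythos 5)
Your argument is correct, and it is worth noting that the paper itself gives no proof of this statement: it is imported verbatim from \cite[Theorem 2.11]{m-hom-dist}, so there is nothing internal to compare against. Your direct verification --- that for $\phi\colon P\to U$ one has $\iota_1|_U\circ\phi=(\iota_U\circ\phi,\,c_{x_0})$ and $\iota_2|_U\circ\phi=(c_{x_0},\,\iota_U\circ\phi)$, and that these are homotopic in $X\times X$ exactly when $\iota_U\circ\phi\simeq c_{x_0}$ (the first component of any homotopy between them is itself a nullhomotopy, and conversely a nullhomotopy $H$ yields the homotopy $(H(\cdot,t),H(\cdot,1-t))$ in one step, without even concatenating) --- is clean and shows the stronger statement that $D_{m,\,\iota_1,\,\iota_2}$ and $B_m$ (equivalently $D_{m,\,c_{x_0},\,\id_X}$) pick out the same open sets. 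The one point you should make explicit is that the passage from ``homotopic to the particular constant $c_{x_0}$'' to ``nullhomotopic'' uses path-connectedness of $X$: for disconnected $X$ the two conditions differ and the stated equality can fail (e.g.\ for a two-point discrete space the left side is infinite while $\mcat(X)=1$). This hypothesis is standing in \cite{m-hom-dist} and appears explicitly in \Cref{cor: m-hom-dist(*:id) = m-cat(X)}, which you invoke, so it is a matter of stating the assumption rather than a gap in the argument. Your alternative route through \Cref{thm: m-hom-dist(f:g) = m-secat(Pi_1)}, identifying the pullback fibration with $e_X$ up to fibre-preserving homotopy equivalence and applying \Cref{prop: fiber-preserving homotopy invariance} and \Cref{cor: m-cat = m-secat(e_X)}, also works and is more in the spirit of how the rest of the paper derives such identities, but the direct argument is shorter and preferable here.
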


The following result is the $m$-category analogue of \cite[Lemma 3.18]{hom-dist-bw-maps}.

\begin{proposition}
\label{prop: m-hom-dist under product}
Let $f,g \colon X \to Y$ and $h \colon X' \to Y'$ be continuous maps. 
Then 
$$
    \Dm(h \times f, h \times g) = \Dm(f \times h, g \times h) = \Dm(f,g).
$$
 
\end{proposition}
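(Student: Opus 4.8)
The plan is to prove each of the two displayed equalities by a pair of opposite inequalities. The two equalities are entirely symmetric: interchanging the roles of the two product factors turns one into the other. So I will only describe the argument for $\Dm(h \times f, h \times g) = \Dm(f,g)$; the equality $\Dm(f \times h, g \times h) = \Dm(f,g)$ follows by running the same steps with the inclusion and the projection taken on the other factor. We may assume $X'$ is nonempty, otherwise both sides degenerate and there is nothing to check.

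First I would prove $\Dm(f,g) \le \Dm(h \times f, h \times g)$ by a purely formal argument using \Cref{prop: m-hom-dist under composition}. Fix a point $x_0' \in X'$, let $j \colon X \to X' \times X$ be the inclusion $j(x) = (x_0', x)$, and let $q = \pr_2 \colon Y' \times Y \to Y$ be the projection onto the second factor. A direct check gives $q \circ (h \times f) \circ j = f$ and $q \circ (h \times g) \circ j = g$. Applying part (1) of \Cref{prop: m-hom-dist under composition} (postcomposition with $q$) and then part (2) (precomposition with $j$),
\begin{align*}
\Dm(f,g) &= \Dm\big(q \circ (h \times f) \circ j,\, q \circ (h \times g) \circ j\big)\\
&\le \Dm\big((h \times f) \circ j,\, (h \times g) \circ j\big) \le \Dm(h \times f, h \times g).
\end{align*}

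For the reverse inequality $\Dm(h \times f, h \times g) \le \Dm(f,g)$ I would argue directly with covers. Put $k = \Dm(f,g)$ and choose an open cover $\{U_i\}_{i=0}^{k}$ of $X$ satisfying property $D_{m,\,f,\,g}$. Then $\{X' \times U_i\}_{i=0}^{k}$ is an open cover of $X' \times X$, and I claim each $X' \times U_i$ satisfies property $D_{m,\,h\times f,\,h\times g}$. Given an $m$-dimensional CW complex $P$ and a map $\phi \colon P \to X' \times U_i$, write $\phi = (\phi_1, \phi_2)$ with $\phi_1 \colon P \to X'$ and $\phi_2 \colon P \to U_i$. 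By property $D_{m,\,f,\,g}$ there is a homotopy $H \colon P \times I \to Y$ from $f|_{U_i} \circ \phi_2$ to $g|_{U_i} \circ \phi_2$. Then $K \colon P \times I \to Y' \times Y$ given by $K(p,t) = \big(h(\phi_1(p)),\, H(p,t)\big)$ is continuous, its first coordinate is independent of $t$, and $K(\cdot,0) = (h \times f)|_{X' \times U_i} \circ \phi$ while $K(\cdot,1) = (h \times g)|_{X' \times U_i} \circ \phi$. This proves the claim, and hence $\Dm(h \times f, h \times g) \le k = \Dm(f,g)$, completing the argument.

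The only mildly delicate point is the second inequality: unlike the first, it cannot be obtained by a formal manipulation with \Cref{prop: m-hom-dist under composition}, since $h \times f$ does not factor through $f$ (its $Y'$-component carries no information about the $Y$-image), so one must track the two coordinates of a map into the product separately and extend the homotopy only in the second coordinate. Alternatively, one could derive $\Dm(h \times f, h \times g) \le \Dm(h,h) + \Dm(f,g) = \Dm(f,g)$ from the product inequality for $m$-homotopic distance together with $\Dm(h,h) = 0$; but that product inequality was established under a normality hypothesis which is not assumed here, so the direct cover argument above is the cleaner route.
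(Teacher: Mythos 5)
Your proposal is correct and follows essentially the same route as the paper: one inequality via the factorization through an inclusion $x \mapsto (x,x_0')$ and a projection together with \Cref{prop: m-hom-dist under composition}, and the reverse inequality by the direct cover argument with sets of the form $X' \times U_i$ (which the paper dismisses as ``a straightforward check'' and you write out explicitly, including the homotopy that is constant in the $Y'$-coordinate). Your closing remarks correctly identify why neither inequality can be replaced by the other's method and why the product inequality is not applicable without normality.
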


\begin{proof}
If $U$ is an open subset of $X$ that satisfies property $D_{m,\,f,\,g}$,
then it is a straightforward check that $U \times X'$ is an open subset of $X \times X'$ that satisfies property $D_{m,\,f \times h,\,g \times h}$. 
Hence, $\Dm(f \times h, g \times h) \leq \Dm(f,g)$.

Suppose $x_0' \in X'$.
Let $\iota_1 \colon X \to X \times X'$ denote the inclusion map given by $\iota_1(x) = (x,x_0')$, and let $\pr_1 \colon X \times X' \to X$ denote the projection onto the first coordinate.
Then $f = \pr_1 \circ (f \times h) \circ \iota_1$ and $g = \pr_1 \circ (g \times h) \circ \iota_1$ implies 
$$
    \Dm(f,g) 
        = \Dm(\pr_1 \circ (f \times h) \circ \iota_1, g = \pr_1 \circ (g \times h) \circ \iota_1)
        \leq \Dm(f\times h, g \times h)
$$
by \Cref{prop: m-hom-dist under composition}
\end{proof}

The following theorem generalizes \Cref{cor: m-hom-dist(*:id) = m-cat(X)}.

\begin{proposition}
\label{prop: m-hom-dist(f:*) = m-cat(f)}
Let $f \colon X \to Y$ be a continuous map, where $Y$ is path-connected. 
Then
$$
    \Dm(c_{y_0},f) = \mcat(f),
$$
where $c_{y_0} \colon X \to Y$ denotes the constant map that takes the value $y_0 \in Y$.
Consequently,
$$
    \mcat(f) \leq \min \{ \mcat(X), \mcat(Y)\}.
$$
\end{proposition}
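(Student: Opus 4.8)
The plan is to prove the equality $\Dm(c_{y_0},f)=\mcat(f)$ by a direct comparison of the open covers occurring in the two definitions, and then to read off the two upper bounds from facts already established for the $m$-homotopic distance. For the equality, fix an open set $U\subseteq X$. For any $m$-dimensional CW complex $P$ and any map $\phi\colon P\to U$, the composite $c_{y_0}|_U\circ\phi$ is simply the constant map $P\to Y$ with value $y_0$, whereas $f|_U\circ\phi=f\circ\iota_U\circ\phi$. Since $Y$ is path-connected, a map $P\to Y$ is nullhomotopic if and only if it is homotopic to the constant map with value $y_0$; consequently $f\circ\iota_U\circ\phi\simeq c_{y_0}|_U\circ\phi$ precisely when $f\circ\iota_U\circ\phi$ is nullhomotopic. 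Thus an open set $U$ satisfies property $D_{m,\,c_{y_0},\,f}$ if and only if it satisfies property $C_{m,\,f}$, so exactly the same open covers witness $\Dm(c_{y_0},f)\le k$ and $\mcat(f)\le k$; hence the two invariants agree.

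An alternative route, closer in spirit to \Cref{thm: m-hom-dist(f:g) = m-secat(Pi_1)}: that theorem identifies $\Dm(c_{y_0},f)$ with $\sctm(\Pi_1)$, where $\Pi_1\colon\mathcal{P}(c_{y_0},f)\to X$ is the pullback of $\pi_Y\colon Y^I\to Y\times Y$ along $(c_{y_0},f)$. One checks that $\mathcal{P}(c_{y_0},f)=\{(x,\gamma)\in X\times Y^I\mid\gamma(0)=y_0,\ \gamma(1)=f(x)\}$ is exactly the pullback of the based path space fibration $e_Y\colon P_{y_0}Y\to Y$ along $f$, with $\Pi_1$ the projection onto $X$ in both descriptions. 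Since $Y$ is path-connected, $e_Y$ is a surjective fibration, and $P_{y_0}Y$ is contractible, so \Cref{prop: secatm and catmf} (2) gives $\sctm(\Pi_1)=\mcat(f)$, recovering the equality.

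For the consequence, $\mcat(f)=\Dm(c_{y_0},f)\le\mcat(X)$ is immediate from \Cref{cor: m-hom-dist(f:g) leq m-cat(X)}. For $\mcat(f)\le\mcat(Y)$, let $c\colon Y\to Y$ denote the constant map with value $y_0$; then $c_{y_0}=c\circ f$ and $f=\id_Y\circ f$ as maps $X\to Y$, so \Cref{prop: m-hom-dist under composition} (2) yields $\Dm(c_{y_0},f)=\Dm(c\circ f,\id_Y\circ f)\le\Dm(c,\id_Y)$, and $\Dm(c,\id_Y)=\mcat(Y)$ by \Cref{cor: m-hom-dist(*:id) = m-cat(X)}. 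Combining gives $\mcat(f)\le\min\{\mcat(X),\mcat(Y)\}$. I do not expect any genuine obstacle here; the only point where path-connectedness of $Y$ is essential is in identifying nullhomotopic maps $P\to Y$ with maps homotopic to the constant map $c_{y_0}$ (equivalently, in guaranteeing that $e_Y$ is surjective so that \Cref{prop: secatm and catmf} applies).
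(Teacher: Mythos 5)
Your proposal is correct and its primary argument is essentially the paper's own proof: the equality follows from the observation that, for path-connected $Y$, a composite $f\circ\iota_U\circ\phi$ is nullhomotopic iff it is homotopic to the constant map at $y_0$, and the two upper bounds follow from \Cref{cor: m-hom-dist(f:g) leq m-cat(X)} and from \Cref{prop: m-hom-dist under composition} combined with \Cref{cor: m-hom-dist(*:id) = m-cat(X)} exactly as you write. Your alternative derivation of the equality via \Cref{thm: m-hom-dist(f:g) = m-secat(Pi_1)} and \Cref{prop: secatm and catmf}~(2), identifying $\mathcal{P}(c_{y_0},f)$ with the pullback of $e_Y$ along $f$, is a valid extra route not taken in the paper.
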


\begin{proof}
Let $U$ be an open subset of $X$. 
Suppose $\phi \colon P \to U$ is a map, where $P$ is an $m$-dimensional CW complex.
Then $f \circ \iota_U \circ \phi$ is nullhomotopic if and only if $f \circ \iota_U \circ \phi \simeq c_{x_0} \circ \iota_U \circ \phi$, since $Y$ is path-connected.
Hence, $\Dm(c_{y_0},f) = \mcat(f)$.

Now we show the inequality.
The first inequality $\mcat(f) = \Dm(f,c_{y_0}) \leq \mcat(X)$ follows from \Cref{cor: m-hom-dist(f:g) leq m-cat(X)},
and the second inequality
$$
    \mcat(f) 
        = \Dm(c_{y_0},f)
        = \Dm(c_{y_0} \circ f, \id_Y \circ f) 
        \leq \Dm(c_{y_0}, \id_Y)
        = \mcat(Y)
$$
follows from \Cref{prop: m-hom-dist under composition} and \Cref{cor: m-hom-dist(*:id) = m-cat(X)}.
\end{proof}

The following result is the $m$-category analogue of \cite[Corollary 3.5]{hom-dist-bw-maps}.

\begin{proposition}
Let $f,g \colon X \to Y$ be continuous maps, where $Y$ is a path-connected space.
Then 
$$
    \Dm(f,g) \leq (\mcat(f)+1)\cdot (\mcat(g)+1) - 1.
$$
\end{proposition}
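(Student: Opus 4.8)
The plan is to intersect a cover witnessing $\mcat(f)$ with one witnessing $\mcat(g)$ and check that every piece of the intersection cover satisfies property $D_{m,\,f,\,g}$. We may assume $\mcat(f)$ and $\mcat(g)$ are both finite, say $\mcat(f) = k$ and $\mcat(g) = l$, since otherwise the right-hand side is infinite and there is nothing to prove. Choose an open cover $\{U_0,\ldots,U_k\}$ of $X$ satisfying property $C_{m,\,f}$ and an open cover $\{V_0,\ldots,V_l\}$ of $X$ satisfying property $C_{m,\,g}$, and set $\mathcal{W} := \{\, U_i \cap V_j \mid 0 \le i \le k,\ 0 \le j \le l \,\}$, which is an open cover of $X$ of cardinality at most $(k+1)(l+1)$.

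The key step is to verify that each $W := U_i \cap V_j$ satisfies property $D_{m,\,f,\,g}$. Given a map $\phi \colon P \to W$ from an $m$-dimensional CW complex $P$, write $\iota_W = \iota_{U_i} \circ j_i$ where $j_i \colon W \hookrightarrow U_i$ is the inclusion; then $j_i \circ \phi \colon P \to U_i$ together with property $C_{m,\,f}$ for $U_i$ shows that $f \circ \iota_W \circ \phi = (\left.f\right|_W) \circ \phi$ is nullhomotopic. Symmetrically, using $W \subseteq V_j$ and property $C_{m,\,g}$ for $V_j$, the map $(\left.g\right|_W) \circ \phi$ is nullhomotopic. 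Since $Y$ is path-connected, any two nullhomotopic maps into $Y$ are homotopic (each is homotopic to a constant map, and all constant maps into a path-connected space are homotopic). Hence $(\left.f\right|_W) \circ \phi \simeq (\left.g\right|_W) \circ \phi$, so $W$ satisfies property $D_{m,\,f,\,g}$.

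Consequently $\mathcal{W}$ is an open cover of $X$ of cardinality at most $(k+1)(l+1)$ all of whose members satisfy $D_{m,\,f,\,g}$, and therefore $\Dm(f,g) \le (k+1)(l+1) - 1 = (\mcat(f)+1)(\mcat(g)+1) - 1$. The only bookkeeping subtlety is that some intersections $U_i \cap V_j$ may be empty; these satisfy $D_{m,\,f,\,g}$ vacuously (or may simply be discarded), and passing to a cover of smaller size only strengthens the bound. I do not expect a serious obstacle here; the single point that genuinely uses a hypothesis is the observation that over a path-connected target ``both restrictions nullhomotopic'' already forces ``the restrictions homotopic'', which is precisely the device that upgrades the $\mcat$ data to $\Dm$ data, exactly as in the classical estimate $\mathrm{D}(f,g) \le (\mcat(f)+1)(\mcat(g)+1) - 1$.
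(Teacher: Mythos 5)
Your proof is correct and follows essentially the same route as the paper: intersect a cover witnessing $\mcat(f)$ with one witnessing $\mcat(g)$, note that on each intersection both $f$ and $g$ become nullhomotopic after precomposition with any map from an $m$-dimensional CW complex, and use path-connectedness of $Y$ to conclude the two compositions are homotopic. No issues.
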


\begin{proof}
Let $U$ be an open subset of $X$ that satisfies property $C_{m,\,f}$.
Suppose $\phi \colon P \to U \cap V$ is a map, where $P$ is an $m$-dimensional CW complex. 
Then, for any subset $V$ of $X$, the composition $f \circ \iota_{U \cap V} \circ \phi$ is nullhomotopic, since
$
    f \circ \iota_{U \cap V} \circ \phi
        = f \circ \iota_{U} \circ (j \circ \phi),
$ 
where $j \colon U \cap V \hookrightarrow U$ is the inclusion map.
Similarly, if $V$ is an open subset of $X$ that satisfies property $C_{m,\,g}$, we have $g \circ \iota_{U \cap V} \circ \phi$ is nullhomotopic for any subset $U$ of $X$.
Hence, if $\{U_r\}_{r=0}^{k}$ and $\{V_s\}_{s=0}^{l}$ are open covers of $X$ that satisfy properties $C_{m,\,f}$ and $C_{m,\,g}$, respectively, then $\{U_r \cap V_s\}_{r,s=0}^{k,l}$ is an open cover of $X$ such that 
$$
    f \circ \iota_{U_r \cap V_s} \circ \phi
        \simeq g \circ \iota_{U_r \cap V_s} \circ \phi,
$$
since $Y$ is path-connected.
\end{proof}

The following result is the $m$-category analogue of \cite[Proposition 3.7]{hom-dist-bw-maps}.

\begin{proposition}
Let $h,h' \colon Z\to X$ and $f,g \colon X \to Y$ be maps such that $f \circ h' \simeq g \circ h'$.
Then 
$$
    \Dm(f \circ h, g \circ h) \leq \Dm(h,h').
$$
\end{proposition}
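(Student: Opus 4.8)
The plan is to show that the \emph{same} open cover witnessing $\Dm(h,h')$ also witnesses $\Dm(f\circ h, g\circ h)$, using the hypothesis $f\circ h'\simeq g\circ h'$ to bridge a short chain of homotopies. There is no genuine obstacle here; the only point requiring care is the bookkeeping of compositions with inclusion maps and the verification that all homotopies take place between maps out of the \emph{same} $m$-dimensional CW complex, so that they can be concatenated within property $D_{m,\,\cdot,\,\cdot}$.

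\textbf{Setup.} Let $k=\Dm(h,h')$ and choose an open cover $\{U_i\}_{i=0}^{k}$ of $Z$ satisfying property $D_{m,\,h,\,h'}$. Fix $i$, let $P$ be an $m$-dimensional CW complex, and let $\phi\colon P\to U_i$ be a map. By property $D_{m,\,h,\,h'}$ we have
$$
    h\circ\iota_{U_i}\circ\phi \;\simeq\; h'\circ\iota_{U_i}\circ\phi
$$
as maps $P\to X$. I would now produce the required homotopy $(f\circ h)|_{U_i}\circ\phi\simeq (g\circ h)|_{U_i}\circ\phi$, i.e. $f\circ h\circ\iota_{U_i}\circ\phi\simeq g\circ h\circ\iota_{U_i}\circ\phi$, by the three-step chain
$$
    f\circ h\circ\iota_{U_i}\circ\phi \;\simeq\; f\circ h'\circ\iota_{U_i}\circ\phi \;\simeq\; g\circ h'\circ\iota_{U_i}\circ\phi \;\simeq\; g\circ h\circ\iota_{U_i}\circ\phi .
$$
The first homotopy is obtained by post-composing $h\circ\iota_{U_i}\circ\phi\simeq h'\circ\iota_{U_i}\circ\phi$ with $f$; the third by post-composing the same homotopy with $g$; and the middle one by pre-composing the hypothesised homotopy $f\circ h'\simeq g\circ h'$ with $\iota_{U_i}\circ\phi$. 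Concatenating, $U_i$ satisfies property $D_{m,\,f\circ h,\,g\circ h}$.

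\textbf{Conclusion.} Since $i$ was arbitrary, $\{U_i\}_{i=0}^{k}$ is an open cover of $Z$ satisfying property $D_{m,\,f\circ h,\,g\circ h}$, whence $\Dm(f\circ h,g\circ h)\le k=\Dm(h,h')$. If one prefers to avoid re-running the cover argument, the first and third steps above are exactly \Cref{prop: m-hom-dist under composition}(1) applied to $\Dm(h,h')$, giving $\Dm(f\circ h,f\circ h')\le\Dm(h,h')$ and $\Dm(g\circ h,g\circ h')\le\Dm(h,h')$; combining these with the equality $\Dm(f\circ h',g\circ h')=0$ (which holds since $f\circ h'\simeq g\circ h'$) via a triangle-type concatenation of covers yields the same bound. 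I would present the direct cover argument as the main proof, as it is the most transparent.
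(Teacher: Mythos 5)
Your proof is correct and coincides with the paper's own argument: both take the cover realizing $\Dm(h,h')$ and, for each map $\phi$ from an $m$-dimensional CW complex into a set of the cover, chain the three homotopies $f\circ h\circ\iota\circ\phi \simeq f\circ h'\circ\iota\circ\phi \simeq g\circ h'\circ\iota\circ\phi \simeq g\circ h\circ\iota\circ\phi$. No differences worth noting.
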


\begin{proof}
Let $U$ be an open subset of $X$ that satisfies property $D_{m,\,h,\,h'}$.
Suppose $\phi \colon P \to U$ is a map, where $P$ is an $m$-dimensional CW complex.
Then $h \circ \iota_U \circ \phi \simeq h' \circ \iota_U \circ \phi$.
Hence,
$$
    f \circ h \circ \iota_U \circ \phi 
        \simeq f \circ h' \circ \iota_U \circ \phi
        \simeq g \circ h' \circ \iota_U \circ \phi 
        \simeq g \circ h \circ \iota_U \circ \phi
$$
implies $U$ satisfies property $D_{m,\,f \circ h,\,g \circ h}$.
\end{proof}

\subsection{Invariance}\label{subsec: invariance}
\hfill\\ \vspace{-0.7em}

The following result is the $m$-category analogue of \cite[Propositions 3.11 and 3.12]{hom-dist-bw-maps} and follows by the same argument. Hence, the proof is omitted and left to the reader.

\begin{lemma}
Let $f,g \colon X \to Y$ be maps.
\begin{enumerate}
    \item If $h \colon Y \to Y'$ is a map with a left homotopy inverse, then
    $$
         \Dm(h \circ f, h \circ g) = \Dm(f, g).
    $$

    \item If $k \colon X' \to X$ is a map with a right homotopy inverse, then
    $$
        \Dm(f \circ k, g \circ k) = \Dm(f, g).
    $$
\end{enumerate}
\end{lemma}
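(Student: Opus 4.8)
The plan is to derive each equality by proving the two inequalities separately, noting that one of them is already in hand. Indeed, \Cref{prop: m-hom-dist under composition}(1) gives $\Dm(h\circ f,h\circ g)\le \Dm(f,g)$ and \Cref{prop: m-hom-dist under composition}(2) gives $\Dm(f\circ k,g\circ k)\le \Dm(f,g)$, so for both parts it remains only to establish the reverse inequality, and this is precisely where the homotopy inverses enter.

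For part (1), I would fix a left homotopy inverse $h'\colon Y'\to Y$ of $h$, so that $h'\circ h\simeq \id_Y$; hence $h'\circ(h\circ f)\simeq f$ and $h'\circ(h\circ g)\simeq g$. Since $\Dm$ depends only on the homotopy classes of its two arguments, this yields $\Dm(f,g)=\Dm(h'\circ(h\circ f),\,h'\circ(h\circ g))$. Applying \Cref{prop: m-hom-dist under composition}(1) with outer map $h'$ and the pair $h\circ f,\ h\circ g$ gives $\Dm(h'\circ(h\circ f),\,h'\circ(h\circ g))\le \Dm(h\circ f,h\circ g)$. Chaining these shows $\Dm(f,g)\le\Dm(h\circ f,h\circ g)$, which together with the inequality from the first paragraph gives the claimed equality.

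Part (2) is symmetric: I would fix a right homotopy inverse $k'\colon X\to X'$ of $k$, so $k\circ k'\simeq \id_X$, whence $(f\circ k)\circ k'=f\circ(k\circ k')\simeq f$ and likewise $(g\circ k)\circ k'\simeq g$. Homotopy invariance then gives $\Dm(f,g)=\Dm((f\circ k)\circ k',\,(g\circ k)\circ k')$, and \Cref{prop: m-hom-dist under composition}(2) applied with outer map $k'$ and the pair $f\circ k,\ g\circ k$ gives $\Dm((f\circ k)\circ k',\,(g\circ k)\circ k')\le \Dm(f\circ k,g\circ k)$. Combining again yields equality.

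No real obstacle is expected here; the whole argument runs on the monotonicity of $\Dm$ under pre- and post-composition and on its homotopy invariance, both already recorded before the statement. The only thing to be careful about is the bookkeeping: applying the composition inequality to the correct outer map ($h'$ in part (1), $k'$ in part (2)) and the correct pair of inner maps, and invoking the homotopies $h'\circ h\simeq\id_Y$ and $k\circ k'\simeq\id_X$ on the side that returns us to $f$ and $g$.
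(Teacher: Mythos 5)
Your proof is correct and is exactly the intended argument: the paper omits the proof, remarking that it follows by the same argument as Propositions 3.11 and 3.12 of the homotopic distance paper, which is precisely this combination of \Cref{prop: m-hom-dist under composition} with the homotopy invariance of $\Dm$ and the relations $h'\circ h\simeq\id_Y$, $k\circ k'\simeq\id_X$.
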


The following result is the $m$-category analogue of \cite[Proposition 3.13]{hom-dist-bw-maps} and follows immediately from the preceding lemma.

\begin{proposition}
\label{prop: invariance of m-hom-dist}
Suppose $f, g \colon X \to Y$ and $f', g' \colon X' \to Y'$ are maps.
If $h \colon X' \to X$ and $k \colon Y \to Y'$ are homotopy equivalences such that $k \circ f \circ h \simeq  f'$ and $k \circ g \circ h \simeq g'$:
\[
\begin{tikzcd}
X \arrow[r, "f", shift left] \arrow[r, "g"', shift right]                   & Y \arrow[d, "k"] \\
X' \arrow[r, "f'", shift left] \arrow[r, "g'"', shift right] \arrow[u, "h"] & {Y'\,.}         
\end{tikzcd}
\]
Then 
$
    \Dm(f, g) = \Dm(f', g').
$
\end{proposition}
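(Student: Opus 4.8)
The plan is to deduce the equality directly from the preceding lemma, combined with the homotopy invariance of $\Dm$ recorded immediately after the definition of $m$-homotopic distance (namely, $f \simeq f'$ and $g \simeq g'$ imply $\Dm(f,g) = \Dm(f',g')$). First I would note that, since $h \colon X' \to X$ and $k \colon Y \to Y'$ are homotopy equivalences, $k$ admits a left homotopy inverse and $h$ admits a right homotopy inverse (a homotopy equivalence has both a left and a right homotopy inverse). Then, using $k \circ f \circ h \simeq f'$ and $k \circ g \circ h \simeq g'$, homotopy invariance gives $\Dm(f',g') = \Dm(k \circ f \circ h,\, k \circ g \circ h)$.

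Next I would peel off the two homotopy equivalences one at a time. Applying part (1) of the lemma to the map $k$ and the pair $f \circ h,\, g \circ h \colon X' \to Y$ gives $\Dm(k \circ (f \circ h),\, k \circ (g \circ h)) = \Dm(f \circ h,\, g \circ h)$. Applying part (2) of the lemma to the map $h$ and the pair $f,\, g \colon X \to Y$ gives $\Dm(f \circ h,\, g \circ h) = \Dm(f,g)$. Chaining these three equalities yields $\Dm(f',g') = \Dm(f,g)$, which is the assertion.

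There is essentially no obstacle here: the argument is a two-step reduction via the lemma, and the only point requiring a moment's care is that the lemma is phrased in terms of one-sided homotopy inverses, so one must invoke the elementary fact that a homotopy equivalence supplies the required one-sided inverse before applying parts (1) and (2). The commutative-up-to-homotopy square in the statement is just bookkeeping that records which composites are being compared, and plays no substantive role beyond justifying the initial application of homotopy invariance.
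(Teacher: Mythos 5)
Your argument is correct and is exactly the route the paper intends: the paper states that this proposition ``follows immediately from the preceding lemma,'' and your chain — homotopy invariance of $\Dm$ to replace $f',g'$ by $k\circ f\circ h,\ k\circ g\circ h$, then part (1) of the lemma for $k$ and part (2) for $h$ — is that immediate deduction spelled out. No gaps.
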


\begin{corollary}
\label{cor: invariance of m-cat}
Suppose $X$ is a path-connected topological space.
If $Y$ is a topological space that is homotopy equivalent to $X$, then $\mcat(X) = \mcat(Y)$.
\end{corollary}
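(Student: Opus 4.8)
The plan is to reduce the statement to the homotopy invariance of the $m$-homotopic distance established in \Cref{prop: invariance of m-hom-dist}, combined with the identification $\mcat(X) = \Dm(c_{x_0},\id_X)$ from \Cref{cor: m-hom-dist(*:id) = m-cat(X)}.

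First I would fix a basepoint $x_0 \in X$ and choose a homotopy equivalence $\varphi \colon X \to Y$ with homotopy inverse $\psi \colon Y \to X$, so that $\varphi \circ \psi \simeq \id_Y$ and $\psi \circ \varphi \simeq \id_X$. Since $X$ is path-connected and path-connectedness is preserved under homotopy equivalence, $Y$ is path-connected as well, so \Cref{cor: m-hom-dist(*:id) = m-cat(X)} is available for both $X$ and $Y$. Set $y_0 := \varphi(x_0) \in Y$.

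Next I would apply \Cref{prop: invariance of m-hom-dist} to the maps $f = c_{x_0}$, $g = \id_X \colon X \to X$ and $f' = c_{y_0}$, $g' = \id_Y \colon Y \to Y$, using the homotopy equivalences $h = \psi \colon Y \to X$ and $k = \varphi \colon X \to Y$. The two required homotopies hold essentially on the nose: $k \circ f \circ h = \varphi \circ c_{x_0} \circ \psi = c_{\varphi(x_0)} = c_{y_0} = f'$, while $k \circ g \circ h = \varphi \circ \psi \simeq \id_Y = g'$. Hence \Cref{prop: invariance of m-hom-dist} gives $\Dm(c_{x_0},\id_X) = \Dm(c_{y_0},\id_Y)$, and then \Cref{cor: m-hom-dist(*:id) = m-cat(X)} applied on each side yields $\mcat(X) = \Dm(c_{x_0},\id_X) = \Dm(c_{y_0},\id_Y) = \mcat(Y)$.

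There is no real obstacle here; the only points requiring a moment's care are verifying that $Y$ inherits path-connectedness so that \Cref{cor: m-hom-dist(*:id) = m-cat(X)} legitimately applies to $Y$, and checking that the constant map behaves as expected under pre- and post-composition, so that the hypotheses of \Cref{prop: invariance of m-hom-dist} are satisfied.
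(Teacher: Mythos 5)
Your proposal is correct and follows exactly the same route as the paper: both reduce the claim to \Cref{cor: m-hom-dist(*:id) = m-cat(X)} together with the invariance of the $m$-homotopic distance from \Cref{prop: invariance of m-hom-dist}, applied to the pairs $(c_{x_0},\id_X)$ and $(c_{y_0},\id_Y)$ via the given homotopy equivalences. Your additional checks (path-connectedness of $Y$ and the behaviour of the constant map under composition) are correct and only make explicit what the paper leaves implicit.
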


\begin{proof}
Suppose $h \colon Y \to X$ is a homotopy equivalence with a homotopy inverse $k \colon X \to Y$.  
Let $x_0 \in X$ be a point. 
Then $k \circ c_{x_0} \circ h = c_{k(x_0)}$ and $k \circ \id_X \circ h \simeq \id_Y$.
Hence, it follows that
$
    \mcat(X) 
        = \Dm(c_{x_0},\id_X)
        = \Dm(c_{k(x_0)},\id_Y)
        = \mcat(Y)
$
by \Cref{cor: m-hom-dist(*:id) = m-cat(X)} and \Cref{prop: invariance of m-hom-dist}.
\end{proof}

\subsection{Bounds on $m$-homotopic distance}
\hfill\\ \vspace{-0.7em}

We first establish the cohomological lower bounds on the $m$-homotopic distance.

\begin{proposition}\label{prop: cohomological lower bound on Dm}
Let $f,g \colon X\to Y$ be maps.
Suppose there exist cohomology classes $u_0,u_1, \dots, u_k \in H^*(Y \times Y; R)$ (for any commutative ring $R$) with 
$$
    \Delta_Y^*(u_0) = \Delta_Y^*(u_1) = \cdots = \Delta_Y^*(u_k) = 0 
        \quad \text{ and } \quad
    (f,g)^*(u_0 \smile u_1 \smile \dots \smile u_k) \neq 0,
$$
where $\Delta_Y \colon Y \to Y \times Y$ denotes the diagonal map.
Then 
$
    \Dm(f,g)\geq k+1,
$
where $m=\max\{\deg(u_i) \mid i=0,1,\dots, k\}$.
\end{proposition}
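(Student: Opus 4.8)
The plan is to reduce the statement to the cohomological lower bound for the $m$-sectional category (\Cref{prop: cohomological lower bound of m-secat}). By \Cref{thm: m-hom-dist(f:g) = m-secat(Pi_1)} we have $\Dm(f,g) = \sctm(\Pi_1)$, where $\Pi_1 \colon \mathcal{P}(f,g) \to X$ is the pullback of the path fibration $\pi_Y \colon Y^I \to Y\times Y$ along $(f,g)$, as in the pullback square \eqref{diag: m-hom-dist(f:g) = m-secat(Pi_1)}. So it is enough to exhibit classes in $H^*(X;R)$ that are annihilated by $\Pi_1^*$, have nonzero cup product, and have maximal degree equal to $m$.

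The candidates are $w_i := (f,g)^*(u_i) \in H^{\deg(u_i)}(X;R)$, $i = 0,\dots,k$. First I would verify $\Pi_1^*(w_i) = 0$. Commutativity of \eqref{diag: m-hom-dist(f:g) = m-secat(Pi_1)} gives $(f,g)\circ\Pi_1 = \pi_Y\circ\Pi_2$, so
\[
    \Pi_1^*(w_i) = \Pi_1^*(f,g)^*(u_i) = \Pi_2^*\pi_Y^*(u_i),
\]
and it suffices to show $\pi_Y^*(u_i) = 0$. Here the only input is that the inclusion of constant paths $c\colon Y \to Y^I$ is a homotopy equivalence with $\pi_Y\circ c = \Delta_Y$; hence $c^*\pi_Y^*(u_i) = \Delta_Y^*(u_i) = 0$, and injectivity of the isomorphism $c^*$ forces $\pi_Y^*(u_i) = 0$. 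Consequently $\Pi_1^*(w_i) = 0$ for every $i$.

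Next, naturality of the cup product yields
\[
    w_0 \smile w_1 \smile \cdots \smile w_k = (f,g)^*(u_0 \smile u_1 \smile \cdots \smile u_k) \neq 0
\]
by hypothesis, and in particular each $w_i \neq 0$, so $\deg(w_i) = \deg(u_i)$ and therefore $\max\{\deg(w_i)\} = \max\{\deg(u_i)\} = m$. Applying \Cref{prop: cohomological lower bound of m-secat} to the fibration $\Pi_1$ with the classes $w_0,\dots,w_k$ gives $\sctm(\Pi_1) \ge k+1$, i.e.\ $\Dm(f,g) \ge k+1$.

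The argument is almost entirely formal; the one place that uses anything is the vanishing $\pi_Y^*(u_i) = 0$, which is not really an obstacle but rather the point at which the geometry of the path fibration enters, and which explains why the hypothesis is phrased in terms of $\Delta_Y^*$ rather than directly in terms of $\pi_Y^*$. I do not anticipate any other difficulties.
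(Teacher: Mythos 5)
Your proof is correct and follows essentially the same route as the paper: pull the classes $u_i$ back along $(f,g)$, show they lie in $\ker(\Pi_1^*)$ via the pullback square and the identification $\pi_Y^* \leftrightarrow \Delta_Y^*$ through the constant-path homotopy equivalence, and then invoke \Cref{thm: m-hom-dist(f:g) = m-secat(Pi_1)} together with \Cref{prop: cohomological lower bound of m-secat}. Your write-up is in fact slightly more careful than the paper's at the one non-formal step, namely deducing $\pi_Y^*(u_i)=0$ from $\Delta_Y^*(u_i)=0$ via injectivity of $c^*$.
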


\begin{proof}
Let $v_i = (f,g)^*(u_i)$. 
Then by considering the induced commutative diagram on the cohomology corresponding to diagram \eqref{diag: m-hom-dist(f:g) = m-secat(Pi_1)}, we have
\[
    \Pi_1^*(v_i)
        = \Pi_1^*((f,g)^*(u_i))
        = \Pi_2^* \circ (\Pi_Y^*(u_i))
        = \Pi_2^* \circ (\Delta_Y^*(u_i))
        = \Pi_2^*(0)=0.
\]
Therefore, $v_i \in \ker(\Pi_1^*)$. 
Moreover, we have 
$$
    v_0 \smile v_1 \smile \dots \smile v_k 
        = (f,g)^*(u_0 \smile u_1 \smile \dots \smile u_k) 
        \neq 0.
$$ 
Then $\Dm(f,g)\geq k+1$ follows from \Cref{prop: cohomological lower bound of m-secat} and \Cref{thm: m-hom-dist(f:g) = m-secat(Pi_1)}.
\end{proof}

\begin{remark}
Observe that the cohomological lower bound on $\TCm(X)$ established in \Cref{prop: cohomological lower bound of m-TC} can be recovered by setting $f=\pr_1$ and $g=\pr_2$ in \Cref{prop: cohomological lower bound on Dm}.    
\end{remark}

\subsubsection{$m$-Cohomological distance}\label{subsubsec: m cohomological distance}

Authors in \cite{Coh-dist} introduced an algebraic invariant called the \emph{cohomological distance} between continuous maps. 
This invariant provides a lower bound for the homotopic distance. 
We recall the definition and introduce the notion of $m$-cohomological distance and show that it provides a lower bound for the $m$-homotopic distance.

\begin{definition}
The cohomological distance between pair of maps $f,g \colon X \to Y$ with coefficients in $R$, denoted by $\HD(f,g;R)$, is the least integer $k$ for which $X$ admits an open cover $\{U_i\}^k_{i=0}$ such that
$$
    (f|_{U_i})^* = (g|_{U_i})^* 
        \colon H^*(Y;R) \to H^*(U_i;R).
$$
for all $i = 0,1,\dots,k$.
\end{definition}

\begin{definition}
The $m$-cohomological distance between pair of maps $f,g \colon X \to Y$ with coefficients in $R$, denoted by $\HDm(f,g;R)$, is the least integer $k$ such that $X$ admits an open cover $\{U_i\}^k_{i=0}$, where each $U_i$ has the following property: for any  $m$-dimensional complex $P$ and any map $\phi \colon P \to U_i$, we have 
$$
    \phi^* \circ (f|_{U_i})^* = \phi^* \circ (g|_{U_i})^* 
        \colon H^*(Y;R) \to H^*(P;R).
$$
\end{definition}

Clearly, for any pair of maps $f,g \colon X \to Y$, we have 
\begin{itemize}
    \item $\HDm(f,g;R)\leq \Dm(f,g)$ for all $m$, and
    \item $\HDm(f,g;R) \leq \HD(f,g;R)$ for all $m$.
\end{itemize}

\begin{property}
Let $f,g \colon X \to Y$ be maps.
An open subset $U$ of $X$ will be said to satisfy property $H^*D_{m,\,f,\,g}$ if, for every $m$-dimensional CW complex $P$ and every map $\phi \colon P \to U$, we have
$$
    \phi^* \circ (f|_{U_i})^* = \phi^* \circ (g|_{U_i})^* 
        \colon H^*(Y;R) \to H^*(P;R).
$$
An open cover $\mathcal{U}$ of $X$ will be said to satisfy property $H^*D_{m,\,f,\,g}$ if every element of $\mathcal{U}$ satisfies property $H^*D_{m,\,f,\,g}$.
\end{property}

\begin{proposition}
\label{prop: monotonicity of m-cohom-dist}
Let $f,g \colon X \to Y$ be maps.
An open subset of $X$ satisfies property $H^*D_{m,\,f,\,g}$ if and only if it satisfies property $H^*D_{n,\,f,\,g}$ for all $n \leq m$.
In particular, $\HDn(f,g;R) \leq \HDm(f,g;R)$ for all $n \leq m$.
\end{proposition}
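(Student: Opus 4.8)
The plan is to reuse, essentially verbatim, the cell-attachment trick from the proof of \Cref{prop: monotonicity of m-secat}~(1). The only implication with content is that property $H^*D_{m,\,f,\,g}$ implies property $H^*D_{n,\,f,\,g}$ for every $n\leq m$; the reverse implication is immediate, since knowing property $H^*D_{n,\,f,\,g}$ for all $n\leq m$ in particular yields it for $n=m$. Once the equivalence of properties is in hand, the ``in particular'' clause is formal: if $\{U_0,\dots,U_k\}$ is an open cover of $X$ witnessing $\HDm(f,g;R)=k$, then each $U_i$ also satisfies $H^*D_{n,\,f,\,g}$ for $n\leq m$, so the very same cover shows $\HDn(f,g;R)\leq k$.

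For the main implication, fix an open set $U\subseteq X$ satisfying property $H^*D_{m,\,f,\,g}$, fix $n\leq m$, and let $\psi\colon Q\to U$ be a map out of an $n$-dimensional CW complex $Q$; the case $Q=\emptyset$ is vacuous, so choose a point $q_0\in Q$ and form the $m$-dimensional CW complex $P:=Q\vee S^m$ obtained by attaching an $m$-cell along the constant map at $q_0$, exactly as in \Cref{prop: monotonicity of m-secat}. Extend $\psi$ to $\phi\colon P\to U$ by declaring $\phi$ to be constant equal to $\psi(q_0)$ on the adjoined cell, and let $\iota_Q\colon Q\hookrightarrow P$ denote the inclusion of the wedge summand, so that $\phi\circ\iota_Q=\psi$. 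Property $H^*D_{m,\,f,\,g}$ applied to $\phi$ gives the equality $\phi^*\circ (f|_U)^*=\phi^*\circ (g|_U)^*$ of maps $H^*(Y;R)\to H^*(P;R)$; composing on the left with $\iota_Q^*$ and using contravariant functoriality, $(\phi\circ\iota_Q)^*=\iota_Q^*\circ\phi^*=\psi^*$, we obtain $\psi^*\circ (f|_U)^*=\psi^*\circ (g|_U)^*$, which is precisely what property $H^*D_{n,\,f,\,g}$ demands.

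I do not expect any real obstacle here: the argument is a direct transcription of \Cref{prop: monotonicity of m-secat}, with ``existence of a lift $s$'' replaced by ``equality of induced homomorphisms''. The only points needing (entirely routine) care are that $P=Q\vee S^m$ is a CW complex of dimension $m$ and that $\psi$ extends continuously over it---both justified exactly as before---and the elementary functoriality identity $(\phi\circ\iota_Q)^*=\iota_Q^*\circ\phi^*$ in singular cohomology. It may also be worth remarking that this monotonicity is consistent with the already-recorded bounds $\HDm(f,g;R)\leq\HD(f,g;R)$ and $\HDm(f,g;R)\leq\Dm(f,g)$.
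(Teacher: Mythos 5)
Your proposal is correct and follows essentially the same route as the paper: the same wedge construction $P = Q \vee S^m$ extending $\psi$ by a constant map, followed by applying property $H^*D_{m,\,f,\,g}$ to the extension and restricting along the inclusion $Q \hookrightarrow P$ via functoriality of cohomology. The paper's proof is a verbatim instance of this argument, so no further comparison is needed.
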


\begin{proof}
Suppose $U$ is an open subset of $B$ that satisfies property $\HDm(f,g;R)$.
Let $\psi \colon Q \to U$ be a map, where $Q$ is a $n$-dimensional CW complex and $n \leq m$. 
If $q_0 \in Q$ is a point, then
$$
    P := Q \vee S^m = \frac{Q \coprod D^m}{q_0 \sim z},  \quad \text{for all }z \in S^{m-1}
$$
is a $m$-dimensional CW complex, and $\psi$ extends to a map $\phi \colon P \to U$, given by $\phi([q]) = \psi(q)$ and $\phi([x]) = \psi(q_0)$ for $q \in Q$ and $x \in D^m$.
Then $\phi^* \circ (f|_{U})^* = \phi^* \circ (g|_{U})^*$.
If $j \colon Q \hookrightarrow P$ is the inclusion map, then $j^* \circ \phi^* \circ (f|_{U})^* = j^* \circ \phi^* \circ (g|_{U})^*$ implies $\psi^* \circ (f|_{U})^* = \psi^* \circ (g|_{U})^*$.
\end{proof}

Following \cite{hom-dist-bw-maps}, we define $\J(f,g;R)$ to be the image of the linear homomorphism $f^*-g^* \colon H^*(Y;R)\to H^*(X;R)$.

\begin{theorem}\label{thm : lower bound for H*Dm}
Let $f,g \colon X \to Y$ be maps.
If there exist cohomology classes $u_0, u_1, \dots, u_k \in \J(f,g;R)$ (for any commutative ring $R$) with $u_0 \smile u_1 \smile \cdots \smile u_k \neq 0$, then
$\HDm(f,g;R) \geq k+1$, where $m = \max\{\deg(u_i) \mid i=0,1,\dots,k\}$.
\end{theorem}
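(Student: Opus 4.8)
The plan is to run the standard cup-length obstruction argument, exactly parallel to the proof of \Cref{prop: cohomological lower bound of m-secat}, but with the lifting-of-sections condition replaced by the agreement-in-cohomology condition defining property $H^*D_{m,\,f,\,g}$. Since each $u_i$ lies in $\J(f,g;R)$, I may write $u_i = f^*(w_i) - g^*(w_i)$ with $w_i \in H^{\deg(u_i)}(Y;R)$. Suppose for contradiction that $\HDm(f,g;R) \leq k$, and fix an open cover $\{U_0, \dots, U_k\}$ of $X$ satisfying property $H^*D_{m,\,f,\,g}$, with inclusions $\iota_i \colon U_i \hookrightarrow X$. Observe that $\iota_i^*(u_i) = (f|_{U_i})^*(w_i) - (g|_{U_i})^*(w_i)$, so the whole argument reduces to proving the vanishing $\iota_i^*(u_i) = 0$ in $H^*(U_i;R)$ for every $i$.

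To prove $\iota_i^*(u_i) = 0$, I would argue as in Cases (1) and (2) of \Cref{prop: cohomological lower bound of m-secat}. Assume $\iota_i^*(u_i) \neq 0$. By CW approximation there is a weak homotopy equivalence $h_i \colon W_i \to U_i$ from a CW complex $W_i$, and $h_i^*$ is an isomorphism, so $h_i^*(\iota_i^*(u_i)) \neq 0$. If $\dim(W_i) \leq m$, then by \Cref{prop: monotonicity of m-cohom-dist} the set $U_i$ satisfies property $H^*D_{\dim(W_i),\,f,\,g}$, and applying it to $\phi = h_i$ gives $h_i^* \circ (f|_{U_i})^* = h_i^* \circ (g|_{U_i})^*$, hence $h_i^*(\iota_i^*(u_i)) = 0$, a contradiction. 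If $\dim(W_i) > m$, let $(W_i)_m$ be the $m$-skeleton with inclusion $\iota \colon (W_i)_m \hookrightarrow W_i$; since $\deg(w_i) \leq m$, the map $\iota^* \colon H^{\deg(w_i)}(W_i;R) \to H^{\deg(w_i)}((W_i)_m;R)$ is injective, so $\iota^*(h_i^*(\iota_i^*(u_i))) \neq 0$, whereas applying property $H^*D_{m,\,f,\,g}$ to $\phi = h_i \circ \iota$ forces $\iota^*(h_i^*(\iota_i^*(u_i))) = 0$, again a contradiction. Hence $\iota_i^*(u_i) = 0$.

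With the vanishing in hand, the conclusion is the classical relative-cohomology computation: the long exact sequence of the pair $(X, U_i)$ yields $v_i \in H^*(X, U_i; R)$ with $j_i^*(v_i) = u_i$, and the relative cup product gives $v_0 \smile v_1 \smile \cdots \smile v_k \in H^*(X, \bigcup_{i=0}^{k} U_i; R) = H^*(X,X;R) = 0$, whence $u_0 \smile u_1 \smile \cdots \smile u_k = 0$ by naturality of the cup product, contradicting the hypothesis. Therefore $\HDm(f,g;R) \geq k+1$.

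I expect the only genuinely technical step to be the vanishing $\iota_i^*(u_i) = 0$: the open sets $U_i$ need not be CW complexes, so detecting a degree-$\leq m$ cohomology class through maps out of $m$-dimensional CW complexes requires the CW-approximation-plus-$m$-skeleton device above. Everything after that is a verbatim transcription of the standard argument, so it should present no difficulty.
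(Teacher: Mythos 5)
Your proposal is correct and follows essentially the same route as the paper's own proof: reduce to the vanishing $\iota_i^*(u_i)=0$ via the identity $\iota_i^*(u_i) = (f|_{U_i})^*(w_i) - (g|_{U_i})^*(w_i)$, establish that vanishing by CW approximation together with the $m$-skeleton injectivity argument, and conclude with the standard relative-cohomology cup-length contradiction. No gaps.
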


\begin{proof}
Let $\HDmB(f,g) \leq k$, and let $\{U_i\}^k_{i=0}$ be an open cover of $X$ which satisfies property $H^*D_{m,\,f,\,g}$. 
For each $i$, there exists a cohomology class $v_i \in H^*(Y;R)$ such that $u_i = (f^*-g^*)(v_i)$.


We claim that $\iota_0^*(u_0) = \iota_1^*(u_1) = \cdots = \iota_k^*(u_k) = 0$, where $\iota_i \colon U_i \hookrightarrow X$ denotes the inclusion map. 
Suppose $\iota_i^{*}(u_i) \neq 0$ for some $i$.
By the cellular approximation theorem, there exists a CW complex $W_i$ and a weak homotopy equivalence $h_i \colon W_i \to U_i$.
Then $h_i^*(\iota_i^*(u_i)) \neq 0$.
Note that 
$$
    (h_i^* \circ \iota_i^*)(u_i) 
        = (h_i^* \circ \iota_i^* \circ (f^* - g^*))(v_i)
        = (h_i^* \circ (\left.f\right|_{U_i})^* - h_i^* \circ (\left.g\right|_{U_i})^*) (v_i).
$$

\noindent
Case (1): Suppose $\dim(W_i) \leq m$. 
Then, by \Cref{prop: monotonicity of m-cohom-dist}, it follows that $h_i^* \circ (\left.f\right|_{U_i})^* = h_i^* \circ (\left.g\right|_{U_i})^*$. 
Hence, $h_i^*(\iota_i^{*}(u_i)) = 0$, which is a contradiction.

\noindent
Case (2): Suppose $\dim(W_i) > m$.
Let $(W_{i})_m$ denote the $m$-skeleton of $W_i$ and $\iota \colon (W_i)_m \hookrightarrow W_i$ denotes the inclusion map.
Then $\iota^* \colon H^{j}(W_i;R) \to H^{j}((W_i)_m;R)$ is injective for $j \leq m$.
Hence, $m \geq \deg(u_i)$ implies $\iota^*(h_i^{*}(\iota_i^*(u_i))) \neq 0$. 
Using property $H^{*}D_{m,\,f,\,g}$, it follows that 
$$
    (h_i \circ \iota)^* \circ (\left.f\right|_{U_i})^* = (h_i \circ \iota) \circ (\left.f\right|_{U_i})^*.
$$
Hence, 
$\iota^*(h_i^*(\iota_i^{*}(u_i))) 
    = \iota^* ((h_i^* \circ (\left.f\right|_{U_i})^* - h_i^* \circ (\left.g\right|_{U_i})^*) (v_i))
    = 0
$
gives a contradiction.

Using the long exact sequence in cohomology for the pair $(X,U_i)$:
\[
\begin{tikzcd}
\cdots \arrow[r] & {H^{*}(X,U_i;R)} \arrow[r, "j_i^*"] & H^{*}(X;R) \arrow[r, "\iota_i^*"]                                                                     & H^{*}(U_i;R) \arrow[r] & \cdots \\
                 &                                     & {H^{*}(Y;R)\,,} \arrow[ru, "(\left.f\right|_{U_i})^*-(\left.g\right|_{U_i})^*"'] \arrow[u, "f^*-g^*"] &                        &       
\end{tikzcd}
\]
we see that there exists a relative cohomology class $w_i \in H^*(X, U_i; R)$ such that $j_i^*(w_i) = u_i$, where $j_i \colon X \hookrightarrow (X,U_i)$ is the inclusion map.
Thus, we obtain
$$
    w_0 \smile w_1 \smile \cdots \smile w_k 
        \in H^*(X, \cup^k_{i=0} U_i; R) = H^*(X,X; R)= 0.
$$
Moreover, by the naturality of the cup product, we have 
$$
    u_0 \smile u_1 \smile \cdots \smile u_k 
        = j^*(w_0 \smile w_1 \smile \cdots \smile w_k) = 0,
$$ 
where $j \colon X \hookrightarrow (X,X)$ is the inclusion map.
This contradicts our assumption.
\end{proof}

\subsubsection{Upper bounds}
\hfill\\ \vspace{-0.7em}

\begin{proposition}\label{prop: dimension-connectivity bound on Dm}
Let $f,g \colon X\to Y$ be maps. 
Suppose $X$ is a CW complex and $\pi_{k+1}(Y)=0$ for all $k \geq m$. 
Then 
\[
    \Dm(f,g) < \frac{\mathrm{hdim}(X)+1}{\mathrm{conn}(Y)+1}.
\]
\end{proposition}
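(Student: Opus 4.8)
The plan is to transfer the statement to the language of sectional category, reduce from the $m$-dimensional invariant to the classical one using the asphericity hypothesis on $Y$, and then invoke the classical Schwarz dimension--connectivity estimate.

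First I would apply \Cref{thm: m-hom-dist(f:g) = m-secat(Pi_1)} to write $\Dm(f,g)=\sctm(\Pi_1)$, where $\Pi_1\colon\mathcal{P}(f,g)\to X$ is the pullback of $\pi_Y\colon Y^I\to Y\times Y$ along $(f,g)$. Its fibre $F$ over a point $x$ is the space of paths in $Y$ from $f(x)$ to $g(x)$; assuming $Y$ path-connected --- the only case in which the claimed bound is not vacuous, since otherwise $\mathrm{conn}(Y)=-1$ and the right-hand side is infinite --- this fibre is homotopy equivalent to the based loop space $\Omega Y$, so $\pi_j(F)\cong\pi_{j+1}(Y)$ for all $j$. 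Hence the hypothesis ``$\pi_{k+1}(Y)=0$ for all $k\ge m$'' says exactly that $\pi_j(F)=0$ for all $j\ge m$, and moreover $\mathrm{conn}(F)=\mathrm{conn}(Y)-1$.

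Since $X$ is a CW complex, \Cref{prop: m-secat = secat if F is ''apsherical''} now applies to $\Pi_1$ and gives $\sctm(\Pi_1)=\sct(\Pi_1)$, so it is enough to bound $\sct(\Pi_1)$. We may assume $\hdim(X)<\infty$; replacing $X$ by a homotopy-equivalent CW complex of dimension $\hdim(X)$ and pulling $\Pi_1$ back along that equivalence changes neither $\sct$ nor $\sctm$ (by \Cref{prop: fiber-preserving homotopy invariance} (1) and \Cref{prop: m-secat under pullback}, exactly as in the proof of \Cref{cor: secat leq m-secat + [hdim/m+1]}), so I may assume the base is $\hdim(X)$-dimensional. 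I then invoke the classical Schwarz dimension--connectivity bound: for a fibration $p\colon E\to B$ over a finite-dimensional CW complex $B$ with fibre $F_p$, one has $\sct(p)<\frac{\dim B+1}{\mathrm{conn}(F_p)+2}$. Applied to $\Pi_1$, whose fibre is $\Omega Y$ with $\mathrm{conn}(\Omega Y)+2=\mathrm{conn}(Y)+1$, this yields
\[
    \Dm(f,g)=\sct(\Pi_1)<\frac{\hdim(X)+1}{\mathrm{conn}(Y)+1}.
\]

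The only substantive step is the Schwarz estimate, which I would either cite from \cite{schwarz1961genus} or reprove in the standard way: $\sct(p)\le k$ iff the $(k+1)$-fold fibrewise join $p^{*(k+1)}$ has a section, its fibre $F^{*(k+1)}$ is $\big((k+1)(\mathrm{conn}(F_p)+2)-2\big)$-connected, and obstruction theory produces a section once $\dim B\le(k+1)(\mathrm{conn}(F_p)+2)-1$. The points to be careful about are that it is the connectivity of $\Omega Y$, not of $Y$, that feeds the join computation, that the ``$+2$'' in the denominator is precisely what converts $\mathrm{conn}(\Omega Y)$ into $\mathrm{conn}(Y)+1$, and the low-connectivity edge cases (for instance $\pi_1(Y)\neq 0$, where $\mathrm{conn}(\Omega Y)=-1$ but the bound still holds). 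An entirely equivalent route --- perhaps the cleaner one for the write-up --- is to bypass sectional category and apply \Cref{prop: m-hom-dist = hom-dist if Y is ''aspherical''} directly, whose hypothesis is the present one, to get $\Dm(f,g)=\mathrm{D}(f,g)$, and then quote the corresponding dimension--connectivity bound for the ordinary homotopic distance from \cite{hom-dist-bw-maps}.
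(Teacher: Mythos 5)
Your proposal is correct and follows essentially the same route as the paper: identify $\Dm(f,g)$ with $\sctm(\Pi_1)$, use the vanishing of $\pi_k(\Omega Y)$ for $k\ge m$ together with \Cref{prop: m-secat = secat if F is ''apsherical''} to upgrade $\sctm(\Pi_1)$ to $\sct(\Pi_1)$, and then invoke Schwarz's dimension--connectivity estimate (the paper cites it as \cite[Theorem 5]{schwarz1961genus}). Your extra care about the finite-dimensionality reduction and the shift $\mathrm{conn}(\Omega Y)+2=\mathrm{conn}(Y)+1$ is accurate but not a departure from the paper's argument.
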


\begin{proof}
From \Cref{thm: m-hom-dist(f:g) = m-secat(Pi_1)}, we have $\Dm(f,g) = \sctm(\Pi_1)$. 
Note that $\Pi_1$ is a fibration with fibre homotopy equivalent to $\Omega Y$. Since $X$ is a CW complex and $\pi_{k}(\Omega Y) = \pi_{k+1}(Y) = 0$ for all $k \geq m$, it follows from \Cref{prop: m-secat = secat if F is ''apsherical''}, that
\[
    \Dm(f,g) = \sctm(\Pi_1) = \sct(\Pi_1).
\]
Then the desired bound follows from \cite[Theorem 5]{schwarz1961genus}.
\end{proof}

In \cite[Theorem 2.19]{m-hom-dist}, the authors established that $\mathrm{D}_1(f,g) = \mathrm{D}(f,g)$ when $Y$ is a $K(\pi,1)$ space. 
The result below generalizes this result.

\begin{proposition}
\label{prop: m-hom-dist = hom-dist if Y is ''aspherical''}
Let $f,g \colon X \to Y$ be maps, where $X$ is a CW complex and $\pi_{k+1}(Y) = 0$ for all $k \geq m$, then
$$
    \Dm(f,g) = \mathrm{D}(f,g). 
$$
In particular, if $Y$ is aspherical, then $\Dm(f,g) = \mathrm{D}(f,g)$ for all $m$.
\end{proposition}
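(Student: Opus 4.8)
The plan is to transport the desired identity $\Dm(f,g)=\mathrm{D}(f,g)$ into an equality between the $m$-sectional category and the ordinary sectional category of a single fibration. Writing $\Pi_1\colon \mathcal{P}(f,g)\to X$ for the fibration in the pullback diagram \eqref{diag: m-hom-dist(f:g) = m-secat(Pi_1)}, \Cref{thm: m-hom-dist(f:g) = m-secat(Pi_1)} gives $\Dm(f,g)=\sctm(\Pi_1)$, while its classical counterpart \cite[Theorem 2.7]{hom-dist-bw-maps} gives $\mathrm{D}(f,g)=\sct(\Pi_1)$. Hence the entire content of the first assertion is the claim $\sctm(\Pi_1)=\sct(\Pi_1)$.

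To prove this I would argue exactly as in the proof of \Cref{prop: dimension-connectivity bound on Dm}. Since $\Pi_1$ is obtained by pulling back the path-space fibration $\pi_Y\colon Y^I\to Y\times Y$ along $(f,g)$, it is a fibration whose fibre over $x\in X$ is the space of paths in $Y$ from $f(x)$ to $g(x)$, which is homotopy equivalent to $\Omega Y$. Consequently the fibre $F$ of $\Pi_1$ satisfies $\pi_k(F)\cong\pi_{k+1}(Y)=0$ for all $k\geq m$ by hypothesis, and the base $X$ is a CW complex; so \Cref{prop: m-secat = secat if F is ''apsherical''} applies and yields $\sctm(\Pi_1)=\sct(\Pi_1)$. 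Combining this with the two identifications above gives $\Dm(f,g)=\mathrm{D}(f,g)$.

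For the asphericity statement, observe that if $Y$ is aspherical then $\pi_{k+1}(Y)=0$ for every $k\geq 1$, so the hypothesis of the first part holds for every $m\geq 1$, and applying that part for each such $m$ gives $\Dm(f,g)=\mathrm{D}(f,g)$ for all $m$. (Equivalently, once the case $m=1$ is in hand one may close with the chain $\mathrm{D}(f,g)=\mathrm{D}_1(f,g)\leq\Dm(f,g)\leq\mathrm{D}(f,g)$, using \Cref{prop: monotonicity of m-hom-dist} together with \Cref{prop: m-hom-dist under composition}(3).) The whole argument is a matter of assembling earlier results; the only point that needs a moment's care is the identification of the homotopy type of the fibre of $\Pi_1$ and the verbatim verification of the hypotheses of \Cref{prop: m-secat = secat if F is ''apsherical''}, so I do not anticipate a genuine obstacle.
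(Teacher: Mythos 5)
Your proposal is correct and follows essentially the same route as the paper: reduce via \Cref{thm: m-hom-dist(f:g) = m-secat(Pi_1)} (and its classical counterpart) to the equality $\sctm(\Pi_1)=\sct(\Pi_1)$, identify the fibre of $\Pi_1$ with $\Omega Y$ up to homotopy, and invoke \Cref{prop: m-secat = secat if F is ''apsherical''}. The paper's proof is exactly this argument, stated more tersely.
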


\begin{proof}
By \Cref{thm: m-hom-dist(f:g) = m-secat(Pi_1)}, it is enough to show that $\sctm(\Pi_1) = \sct(\Pi_1)$, where the fibration $\Pi_1$ is from diagram \eqref{diag: m-hom-dist(f:g) = m-secat(Pi_1)}.
As the fibre $\Omega Y$ of $\Pi_1$ satisfies $\pi_k(\Omega Y) = \pi_{k+1}(Y) = 0$ for all $k \geq m$, the desired equality follows from \Cref{prop: m-secat = secat if F is ''apsherical''}. 
\end{proof}

The following proposition follows from \Cref{prop: secat leq m-secat + [k/m+1]} and \Cref{thm: m-hom-dist(f:g) = m-secat(Pi_1)}.

\begin{proposition}
\label{prop: hom-dist leq m-hom-dist + [k/m+1]}
Let $f,g \colon X \to Y$ be maps, where $X$ is a $k$-dimensional CW complex. 
Then
$$
    \mathrm{D}(f,g) \leq \Dm(f,g) + \left[\frac{k}{m+1}\right], 
        \quad \text{and} \quad 
    \mathrm{D}(f,g) \leq \max\{\mathrm{D}_{k-1}(f,g),2\},
$$
where $[w]$ denotes the greatest integer less than or equal to $w$.
\end{proposition}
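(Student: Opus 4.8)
The plan is to reduce both inequalities to \Cref{prop: secat leq m-secat + [k/m+1]} applied to the fibration $\Pi_1 \colon \mathcal{P}(f,g) \to X$ from the pullback diagram \eqref{diag: m-hom-dist(f:g) = m-secat(Pi_1)}, whose base $X$ is a $k$-dimensional CW complex. First I would record the identifications supplied by \Cref{thm: m-hom-dist(f:g) = m-secat(Pi_1)}, namely $\Dm(f,g) = \sctm(\Pi_1)$ for every $m$, and in particular $\mathrm{D}_{k-1}(f,g) = \sct_{k-1}(\Pi_1)$. I would also invoke the classical identification $\mathrm{D}(f,g) = \sct(\Pi_1)$ from \cite[Theorem 2.7]{hom-dist-bw-maps} (the non-$m$-dimensional statement of which \Cref{thm: m-hom-dist(f:g) = m-secat(Pi_1)} is the analogue).

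Next I would apply \Cref{prop: secat leq m-secat + [k/m+1]} with $p = \Pi_1$, whose base $X$ is a $k$-dimensional CW complex, to obtain
\[
    \sct(\Pi_1) \leq \sctm(\Pi_1) + \left[\frac{k}{m+1}\right]
        \quad\text{and}\quad
    \sct(\Pi_1) \leq \max\{\sct_{k-1}(\Pi_1),\, 2\}.
\]
Substituting $\sct(\Pi_1) = \mathrm{D}(f,g)$, $\sctm(\Pi_1) = \Dm(f,g)$, and $\sct_{k-1}(\Pi_1) = \mathrm{D}_{k-1}(f,g)$ then yields precisely the two displayed inequalities.

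There is no real obstacle here: the entire content is carried by \Cref{prop: secat leq m-secat + [k/m+1]}, and the argument is purely bookkeeping with the pullback fibration $\Pi_1$. The only mild point worth a sentence is the identification $\mathrm{D}(f,g) = \sct(\Pi_1)$; if one prefers to stay inside the present paper, it also follows from \Cref{thm: m-hom-dist(f:g) = m-secat(Pi_1)} by taking $m \geq k = \hdim(X)$, since then $\sctm(\Pi_1) = \sct(\Pi_1)$ by \Cref{cor: secat leq m-secat + [hdim/m+1]} while $\Dm(f,g) = \mathrm{D}(f,g)$ (the inequality $\Dm(f,g)\le \mathrm{D}(f,g)$ being \Cref{prop: m-hom-dist under composition}(3), and the reverse following for $m\ge k$ from a CW approximation of dimension $\le k$ of each cover element).
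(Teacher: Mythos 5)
Your proposal is correct and is exactly the argument the paper intends: apply \Cref{prop: secat leq m-secat + [k/m+1]} to the fibration $\Pi_1$ over the $k$-dimensional base $X$ and translate via $\Dm(f,g)=\sctm(\Pi_1)$ from \Cref{thm: m-hom-dist(f:g) = m-secat(Pi_1)} together with the classical identification $\mathrm{D}(f,g)=\sct(\Pi_1)$. Your extra remark justifying $\mathrm{D}(f,g)=\sct(\Pi_1)$ internally (via large $m$) is a nice touch but not needed, since the paper already relies on the cited classical result.
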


Using similar arguments to those in \Cref{cor: secat leq m-secat + [hdim/m+1]}, together with previous proposition and the invariance of $m$-homotopic distance established in \Cref{prop: invariance of m-hom-dist}, we get the following result.

\begin{corollary}
\label{cor: hom-dist leq m-hom-dist + [hdim/m+1]}
Let $f,g \colon X \to Y$ be maps, where $X$ is a finite dimensional CW complex. 
Then
$$
    \mathrm{D}(f,g) \leq \Dm(f,g) + \left[\frac{\hdim(X)}{m+1}\right], 
$$
where $[w]$ denotes the greatest integer less than or equal to $w$.
In particular, 
$$
    \Dm(f,g) = \mathrm{D}(f,g).
$$
for $m \geq \hdim(X)$.
\end{corollary}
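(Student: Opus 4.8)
The plan is to reduce the finite-dimensional case to the genuinely $k$-dimensional case handled by \Cref{prop: hom-dist leq m-hom-dist + [k/m+1]}, exactly as in the proof of \Cref{cor: secat leq m-secat + [hdim/m+1]}, but with ``fibration'' replaced by ``pair of maps'' and fibre-preserving homotopy invariance replaced by \Cref{prop: invariance of m-hom-dist}. First I would choose a CW complex $X'$ with $\dim(X') = \hdim(X)$ together with a homotopy equivalence $h \colon X' \to X$; such an $X'$ exists by the definition of homotopy dimension. Setting $f' := f \circ h$ and $g' := g \circ h \colon X' \to Y$, the data $(h, \id_Y)$ satisfies the hypotheses of \Cref{prop: invariance of m-hom-dist} (take $Y' = Y$ and $k = \id_Y$, so that $k \circ f \circ h = f'$ and $k \circ g \circ h = g'$). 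Hence $\Dm(f',g') = \Dm(f,g)$, and the classical analogue of that statement, available in \cite{hom-dist-bw-maps}, gives $\mathrm{D}(f',g') = \mathrm{D}(f,g)$.

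Next I would apply \Cref{prop: hom-dist leq m-hom-dist + [k/m+1]} to $f', g' \colon X' \to Y$, whose common domain is a $k$-dimensional CW complex with $k = \hdim(X)$. This yields
\[
    \mathrm{D}(f',g') \leq \Dm(f',g') + \left[\frac{\hdim(X)}{m+1}\right].
\]
Substituting the two equalities obtained in the previous paragraph gives $\mathrm{D}(f,g) \leq \Dm(f,g) + \left[\frac{\hdim(X)}{m+1}\right]$, which is the asserted inequality.

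For the final clause, I would note that $m \geq \hdim(X)$ forces $\hdim(X) < m+1$, hence $\left[\frac{\hdim(X)}{m+1}\right] = 0$, so the displayed inequality collapses to $\mathrm{D}(f,g) \leq \Dm(f,g)$; combined with $\Dm(f,g) \leq \mathrm{D}(f,g)$, which holds for every $m$ by \Cref{prop: m-hom-dist under composition}(3), this gives equality. There is no real obstacle here: the only point requiring a little care is invoking homotopy invariance of the \emph{classical} distance $\mathrm{D}$ under precomposition with a homotopy equivalence, which is the $\mathrm{D}$-analogue of \Cref{prop: invariance of m-hom-dist} and is already recorded in \cite{hom-dist-bw-maps}; the rest is the same bookkeeping used for \Cref{cor: secat leq m-secat + [hdim/m+1]}.
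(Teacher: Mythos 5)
Your proposal is correct and follows exactly the route the paper intends: replace $X$ by a homotopy-equivalent CW complex $X'$ of dimension $\hdim(X)$, transport both $\Dm$ and $\mathrm{D}$ along the precomposition with the homotopy equivalence (via \Cref{prop: invariance of m-hom-dist} and its classical analogue from \cite{hom-dist-bw-maps}), and then apply \Cref{prop: hom-dist leq m-hom-dist + [k/m+1]} to the maps on $X'$. The paper only sketches this argument by reference to \Cref{cor: secat leq m-secat + [hdim/m+1]}, and your write-up fills in the same steps, including the correct handling of the $m \geq \hdim(X)$ case via \Cref{prop: m-hom-dist under composition}(3).
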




\subsection{Normal spaces and triangle inequality}
\hfill\\ \vspace{-0.7em}

The proof of the following proposition follows a similar argument to that of \Cref{prop: inheritance of m-secat}. 
Hence, the proof is omitted and left to the reader.

\begin{proposition}
\label{prop: inheritance of m-hom-dist}
Property $D_{m,\,f,\,g}$ is inherited by open subsets and finite disjoint unions.
\end{proposition}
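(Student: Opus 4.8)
The plan is to follow the proof of \Cref{prop: inheritance of m-secat} essentially verbatim, replacing the section-lifting condition defining property $A_{m,\,p}$ by the homotopy condition defining property $D_{m,\,f,\,g}$. For open subsets, suppose $U$ is an open subset of $X$ satisfying property $D_{m,\,f,\,g}$ and let $U' \subseteq U$ be open, with inclusion $j \colon U' \hookrightarrow U$. Given an $m$-dimensional CW complex $P$ and a map $\phi \colon P \to U'$, I would apply property $D_{m,\,f,\,g}$ of $U$ to the composite $j \circ \phi \colon P \to U$ to get $\left.f\right|_{U} \circ (j \circ \phi) \simeq \left.g\right|_{U} \circ (j \circ \phi)$; since $\iota_U \circ j = \iota_{U'}$ gives $\left.f\right|_{U} \circ j = \left.f\right|_{U'}$ and $\left.g\right|_{U} \circ j = \left.g\right|_{U'}$, this is exactly $\left.f\right|_{U'} \circ \phi \simeq \left.g\right|_{U'} \circ \phi$, so $U'$ satisfies property $D_{m,\,f,\,g}$.

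For finite disjoint unions, by induction it suffices to treat two disjoint open sets $U_1, U_2$ satisfying property $D_{m,\,f,\,g}$. Given a map $\phi \colon P \to U_1 \sqcup U_2$ from an $m$-dimensional CW complex $P$, I would set $P_i := \phi^{-1}(U_i)$; exactly as in \Cref{prop: inheritance of m-secat}, $P_i$ is the union of those connected components $C$ of $P$ with $\phi(C) \subseteq U_i$, so $P_i$ is a subcomplex of $P$ with $\dim P_i \le m$, and $P = P_1 \sqcup P_2$. Writing $\phi_i \colon P_i \to U_i$ for the corestriction of $\phi$, property $D_{m,\,f,\,g}$ of $U_i$ provides a homotopy $H^i \colon P_i \times I \to Y$ from $\left.f\right|_{U_i} \circ \phi_i$ to $\left.g\right|_{U_i} \circ \phi_i$. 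Since $P \times I = (P_1 \times I) \sqcup (P_2 \times I)$, the maps $H^i$ patch together to a continuous homotopy $H \colon P \times I \to Y$, and $H$ witnesses $\left.f\right|_{U_1 \sqcup U_2} \circ \phi \simeq \left.g\right|_{U_1 \sqcup U_2} \circ \phi$.

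The argument is purely formal and I do not expect a genuine obstacle; the only points needing a moment's care are the same two that arise in \Cref{prop: inheritance of m-secat}: that each $\phi^{-1}(U_i)$ is a CW subcomplex of $P$ of dimension at most $m$ (so that property $D_{m,\,f,\,g}$ may legitimately be applied to $\phi_i$), and that homotopies defined on the clopen pieces $P_1$ and $P_2$ of a disjoint union glue to a continuous homotopy on all of $P$. This is precisely why the paper defers the proof to the reader.
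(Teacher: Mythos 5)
Your proposal is correct and is exactly the argument the paper intends: it transports the two-part proof of \Cref{prop: inheritance of m-secat} (restriction to open subsets, and decomposition of $P$ into the clopen subcomplexes $\phi^{-1}(U_i)$ followed by gluing) to the homotopy condition $D_{m,\,f,\,g}$, with \Cref{prop: monotonicity of m-hom-dist} playing the role of \Cref{prop: monotonicity of m-secat} when $\dim P_i < m$. The paper omits the proof precisely because this is the verification you have written out.
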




\begin{lemma}[{\cite[Lemma 4.3]{oprea2011mixing}}]
\label{lemma: merging covers property}    
Let $X$ be a normal space with two open covers
$$
    \mathcal{U} = \{U_0,U_1,\dots,U_k\} 
        \quad \text{and} \quad
    \mathcal{V} = \{V_0,V_1,\dots,V_n\}
$$
such that each set of $\mathcal{U}$ satisfies property (A) and each set of $\mathcal{V}$ satisfies property (B).
If properties (A) and (B) are inherited by open subsets and disjoint unions, then $X$ has an open cover 
$$
    \mathcal{W} = \{W_0,W_1,\dots,W_{k+n}\}
$$
by open sets satisfying both property (A) and property (B).
\end{lemma}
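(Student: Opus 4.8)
The plan is to follow exactly the combinatorial scheme used in the proof of \Cref{prop: prod-ineq for m-secat}, with the abstract properties (A) and (B) playing the roles that $A_{m,p_1}$ and $A_{m,p_2}$ played there. Write $\mathcal{U}=\{U_0,\dots,U_k\}$ and $\mathcal{V}=\{V_0,\dots,V_n\}$. First I would apply \Cref{thm: extend cover to k+1 cover}, with the parameter $l$ taken to be $k+n$, to extend $\mathcal{U}$ to an open $(k+1)$-cover $\mathcal{U}'=\{U_0,\dots,U_{k+n}\}$ in which each new set $U_N$ (for $N>k$) is a disjoint union of open sets, every one of which is contained in some $U_j$ with $0\le j\le k$. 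Since property (A) is inherited by open subsets, each such piece satisfies (A); since (A) is inherited by disjoint unions, each $U_N$ satisfies (A); and $U_0,\dots,U_k$ satisfy (A) by hypothesis. Hence every member of $\mathcal{U}'$ satisfies (A). Symmetrically, extend $\mathcal{V}$ (again with parameter $k+n$) to an open $(n+1)$-cover $\mathcal{V}'=\{V_0,\dots,V_{k+n}\}$, every member of which satisfies (B).

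Next I would set $W_l:=U_l\cap V_l$ for $0\le l\le k+n$ and put $\mathcal{W}:=\{W_0,\dots,W_{k+n}\}$. Each $W_l$ is an open subset of $U_l$, hence satisfies (A), and an open subset of $V_l$, hence satisfies (B), so every member of $\mathcal{W}$ has both properties. The remaining point is that $\mathcal{W}$ covers $X$. Fix $x\in X$. As $\mathcal{U}'$ is a $(k+1)$-cover of $X$ consisting of $k+n+1$ sets, \Cref{lem: equivalent criterion n+1 cover} gives that $x$ lies in at least $n+1$ of the sets $U_0,\dots,U_{k+n}$; likewise $x$ lies in at least $k+1$ of the sets $V_0,\dots,V_{k+n}$. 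Putting $I=\{\,l: x\in U_l\,\}$ and $J=\{\,l: x\in V_l\,\}$, we have $|I|\ge n+1$ and $|J|\ge k+1$, while $I,J\subseteq\{0,1,\dots,k+n\}$, a set of cardinality $k+n+1$. Since $|I|+|J|\ge (n+1)+(k+1)>k+n+1$, the sets $I$ and $J$ must intersect, and any $l\in I\cap J$ satisfies $x\in U_l\cap V_l=W_l$. Thus $\mathcal{W}$ covers $X$ and is the required cover.

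The argument is essentially bookkeeping, so I do not expect a genuine obstacle; the only points that must be handled with care are: (i) that the disjoint unions produced by \Cref{thm: extend cover to k+1 cover} are disjoint unions of open \emph{subsets} of the original sets, so that the termwise inheritance of (A) and (B) actually applies; and (ii) that both covers are extended to exactly $k+n+1$ members, which is precisely what makes the counting inequality $|I|+|J|>k+n+1$ go through. With those in place the reasoning is purely set-theoretic and requires no input beyond the normality of $X$, which is already used inside \Cref{thm: extend cover to k+1 cover}.
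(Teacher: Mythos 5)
Your proof is correct and follows essentially the same scheme the paper itself uses (the lemma is only cited from Oprea--Strom, but your argument is exactly the one carried out in the proof of \Cref{prop: prod-ineq for m-secat}: extend both covers to $(k+1)$- and $(n+1)$-covers of length $k+n+1$ via \Cref{thm: extend cover to k+1 cover}, use inheritance under open subsets and disjoint unions to preserve the properties, and conclude covering by the pigeonhole count from \Cref{lem: equivalent criterion n+1 cover}). No gaps.
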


The following result is the $m$-category analogue of \cite[Proposition 3.16]{hom-dist-bw-maps}.

\begin{proposition}
\label{prop: triangle-inequality}
Let $f,g,h \colon X \rightarrow Y$ be maps, where $X$ is a normal space. 
Then
$$
    \Dm(f,g) \leq \Dm(f,h) + \Dm(h,g).
$$
\end{proposition}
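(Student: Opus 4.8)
The plan is to combine the merging-covers lemma (\Cref{lemma: merging covers property}) with the inheritance property for $D_{m,\,f,\,g}$ (\Cref{prop: inheritance of m-hom-dist}), and then use transitivity of the homotopy relation. Set $\Dm(f,h) = k$ and $\Dm(h,g) = n$ (if either is infinite there is nothing to prove). By definition, $X$ admits an open cover $\mathcal{U} = \{U_0,\dots,U_k\}$ satisfying property $D_{m,\,f,\,h}$ and an open cover $\mathcal{V} = \{V_0,\dots,V_n\}$ satisfying property $D_{m,\,h,\,g}$.

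Next I would invoke \Cref{prop: inheritance of m-hom-dist}, which tells us that both property $D_{m,\,f,\,h}$ and property $D_{m,\,h,\,g}$ are inherited by open subsets and finite disjoint unions. Since $X$ is normal, the hypotheses of \Cref{lemma: merging covers property} are met with ``property (A)'' taken to be $D_{m,\,f,\,h}$ and ``property (B)'' taken to be $D_{m,\,h,\,g}$. The lemma therefore produces an open cover $\mathcal{W} = \{W_0,\dots,W_{k+n}\}$ of $X$ such that every $W_i$ satisfies both property $D_{m,\,f,\,h}$ and property $D_{m,\,h,\,g}$.

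Finally I would check that each $W_i$ satisfies property $D_{m,\,f,\,g}$. Let $P$ be an $m$-dimensional CW complex and $\phi \colon P \to W_i$ a map. Because $W_i$ satisfies $D_{m,\,f,\,h}$ we get $\left.f\right|_{W_i}\circ\phi \simeq \left.h\right|_{W_i}\circ\phi$, and because $W_i$ satisfies $D_{m,\,h,\,g}$ we get $\left.h\right|_{W_i}\circ\phi \simeq \left.g\right|_{W_i}\circ\phi$; concatenating these homotopies yields $\left.f\right|_{W_i}\circ\phi \simeq \left.g\right|_{W_i}\circ\phi$. Hence $\mathcal{W}$ is an open cover of $X$ of cardinality $k+n+1$ satisfying property $D_{m,\,f,\,g}$, so $\Dm(f,g) \le k+n = \Dm(f,h)+\Dm(h,g)$.

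There is no serious obstacle here: the argument is the direct $m$-dimensional analogue of the classical triangle inequality for homotopic distance, and all the technical content has been isolated into \Cref{prop: inheritance of m-hom-dist} and \Cref{lemma: merging covers property}. The only point requiring a moment's care is confirming that the restrictions behave correctly under passing to the smaller sets $W_i$ produced by the merging lemma — but this is exactly what the inheritance statement guarantees, since each $W_i$ is obtained from the $U_j$ and $V_l$ by intersections, open subsets, and disjoint unions.
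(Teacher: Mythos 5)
Your proposal is correct and follows essentially the same route as the paper's own proof: fix covers realizing $\Dm(f,h)$ and $\Dm(h,g)$, apply \Cref{prop: inheritance of m-hom-dist} and \Cref{lemma: merging covers property} to merge them into a cover of cardinality $k+n+1$ satisfying both properties, and conclude by transitivity of homotopy. No gaps.
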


\begin{proof}
Assume that $\Dm(f,h) = k$ and $\Dm(h,g) = n$. 
Let $\mathcal{U} = \{U_0, \dots, U_{k}\}$ be an open cover of $X$ that satisfies property $D_{m,\,f,\,h}$, and let $\mathcal{V} = \{V_0, \dots, V_{n}\}$ be an open cover of $X$ that satisfies property $D_{m,\,h,\,g}$.
Then, by \Cref{lemma: merging covers property} and \Cref{prop: inheritance of m-hom-dist}, there exists an open cover $\mathcal{W} = \{W_0, \dots, W_{k+n}\}$ such that
$$
    f \circ \iota_{W_i} \circ \phi 
        \simeq h \circ \iota_{W_i} \circ \phi
        \simeq g \circ \iota_{W_i} \circ \phi
$$
for any map $\phi \colon P \to W_i$, where $P$ is an $m$-dimensional CW complex.
Hence, $\mathcal{W}$ satisfies property $D_{m,\,f,\,g}$.
\end{proof}

\begin{corollary}
Let $f,g \colon X \rightarrow Y$ be maps, where $X$ is a normal and $Y$ is path-connected. 
Then 
$$
    \Dm(f,g) \leq \mcat(f)+\mcat(g).
$$
\end{corollary}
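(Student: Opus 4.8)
The plan is to combine the triangle inequality for the $m$-homotopic distance with the identification of $\mcat(f)$ as a homotopic distance from a constant map. Concretely, I would choose a basepoint $y_0 \in Y$ and let $c_{y_0} \colon X \to Y$ denote the constant map at $y_0$, then insert $c_{y_0}$ as the middle term in the triangle inequality.

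First I would invoke \Cref{prop: triangle-inequality}, which applies since $X$ is normal, with the three maps $f$, $c_{y_0}$, $g \colon X \to Y$; this gives
\[
    \Dm(f,g) \leq \Dm(f, c_{y_0}) + \Dm(c_{y_0}, g).
\]
Next, since $Y$ is path-connected, \Cref{prop: m-hom-dist(f:*) = m-cat(f)} identifies $\Dm(c_{y_0}, f) = \mcat(f)$, and by symmetry of the $m$-homotopic distance (noted right after the definition) $\Dm(f, c_{y_0}) = \mcat(f)$; similarly $\Dm(c_{y_0}, g) = \mcat(g)$. Substituting these into the displayed inequality yields $\Dm(f,g) \leq \mcat(f) + \mcat(g)$, as desired.

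I do not anticipate a genuine obstacle here: the corollary is a formal consequence of the two cited results, with the only mild subtlety being to make sure the hypotheses line up ($X$ normal for the triangle inequality, $Y$ path-connected for the constant-map identification), both of which are exactly the assumptions in the statement. The proof is therefore short, and the main content has already been carried out in \Cref{prop: triangle-inequality} and \Cref{prop: m-hom-dist(f:*) = m-cat(f)}.
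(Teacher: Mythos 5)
Your proposal is correct and follows exactly the paper's argument: insert the constant map $c_{y_0}$ via the triangle inequality (\Cref{prop: triangle-inequality}, using normality of $X$) and then apply \Cref{prop: m-hom-dist(f:*) = m-cat(f)} (using path-connectedness of $Y$) together with symmetry of $\Dm$. Nothing further is needed.
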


\begin{proof}
The desired inequality 
$
    \Dm(f,g) 
        \leq \Dm(f,c_{y_0}) + \Dm(c_{y_0},g)
        = \mcat(f) + \mcat(g)
$
follows from \Cref{prop: triangle-inequality} and \Cref{prop: m-hom-dist(f:*) = m-cat(f)}. 
\end{proof}

The following serves as the $m$-category analogue of \cite[Proposition 3.17]{hom-dist-bw-maps} and generalizes statements (1) and (2) of \Cref{prop: m-hom-dist under composition}.

\begin{proposition}
Let $f,g,h \colon X \rightarrow Y$ and $f',g' \colon Y \rightarrow Z$ be maps, where $X$ is a normal space. 
Then
$$
    \Dm(f' \circ f, g' \circ g) \leq \Dm(f,g) + \Dm(f',g').
$$
\end{proposition}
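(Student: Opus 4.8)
The plan is to deduce the inequality from the triangle inequality for $\Dm$ (\Cref{prop: triangle-inequality}, which is available precisely because $X$ is normal) together with the monotonicity of $\Dm$ under pre- and post-composition recorded in \Cref{prop: m-hom-dist under composition} (1) and (2). The key idea is to insert the ``mixed'' map $f' \circ g \colon X \to Z$ as an intermediate term between $f' \circ f$ and $g' \circ g$, so that each of the two resulting pieces is controlled by exactly one of $\Dm(f,g)$ and $\Dm(f',g')$.

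Concretely, first I would apply \Cref{prop: triangle-inequality} to the three maps $f' \circ f,\ f' \circ g,\ g' \circ g \colon X \to Z$ to get
\[
    \Dm(f' \circ f, g' \circ g) \leq \Dm(f' \circ f, f' \circ g) + \Dm(f' \circ g, g' \circ g).
\]
For the first summand, both maps arise by post-composing $f \colon X \to Y$ and $g \colon X \to Y$ with the single map $f' \colon Y \to Z$, so \Cref{prop: m-hom-dist under composition} (1) gives $\Dm(f' \circ f, f' \circ g) \leq \Dm(f,g)$. For the second summand, both maps arise by pre-composing $f' \colon Y \to Z$ and $g' \colon Y \to Z$ with the single map $g \colon X \to Y$, so \Cref{prop: m-hom-dist under composition} (2) gives $\Dm(f' \circ g, g' \circ g) \leq \Dm(f',g')$. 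Adding the two estimates yields the claimed bound.

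There is no substantive obstacle: the only hypothesis that genuinely needs to be invoked is the normality of $X$, which is required so that \Cref{prop: triangle-inequality} applies; the rest is a formal consequence of the composition inequalities already established. One could equally well route the interpolation through $g' \circ f$ rather than $f' \circ g$, arriving at the same conclusion.
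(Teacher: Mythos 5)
Your proof is correct. It differs from the paper's in packaging rather than in substance: both arguments interpolate through a ``mixed'' composite (you use $f' \circ g$, the paper uses $g' \circ f$ --- either works), but the paper does not invoke \Cref{prop: triangle-inequality}. Instead it argues directly at the level of covers: a cover of $X$ witnessing $\Dm(f,g)$ automatically satisfies property $D_{m,\,g'\circ f,\,g'\circ g}$, the preimage under $f$ of a cover of $Y$ witnessing $\Dm(f',g')$ satisfies property $D_{m,\,f'\circ f,\,g'\circ f}$, and these two covers of $X$ are merged into $k+n+1$ sets via \Cref{lemma: merging covers property} and \Cref{prop: inheritance of m-hom-dist} --- which is exactly the mechanism underlying the proof of \Cref{prop: triangle-inequality} itself. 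So your deduction, triangle inequality plus the two monotonicity statements of \Cref{prop: m-hom-dist under composition}, is a legitimate factorization of the paper's argument through results already established; it is shorter and more formal, while the paper's version is self-contained and makes the cover construction explicit. Both uses of \Cref{prop: m-hom-dist under composition} in your argument are applied correctly (post-composition by $f'$ for the first summand, pre-composition by $g$ for the second), and normality of $X$ is indeed the only hypothesis needed, entering only through the triangle inequality.
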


\begin{proof}
Suppose $\Dm(f,g) = k$ and $\Dm(f',g') = n$.
If $\mathcal{U} = \{U_0, \dots, U_{k}\}$ is an open cover of $X$ that satisfies property $D_{m,\,f,\,g}$, then $\mathcal{U}$ satisfies property $D_{m,\,g' \circ f,\,g' \circ g}$.
If $\mathcal{V} = \{V_0, \dots, V_{n}\}$ is an open cover of $Y$ that satisfies property $D_{m,\,f',\,g'}$,
then $\{f^{-1}(V_0), \dots, f^{-1}(V_n)\}$ is an open cover of $X$ that satisfies property $D_{m,\,f' \circ f,\, g' \circ f}$.
Then, by \Cref{lemma: merging covers property} and \Cref{prop: inheritance of m-hom-dist}, there exists an open cover $\mathcal{W} = \{W_0, \dots, W_{k+n}\}$ such that
$$
    (f' \circ f) \circ \iota_{W_i} \circ \phi 
        \simeq (g' \circ f) \circ \iota_{W_i} \circ \phi
        \simeq (g' \circ g) \circ \iota_{W_i} \circ \phi
$$
for any map $\phi \colon P \to W_i$, where $P$ is an $m$-dimensional CW complex.
Hence, $\mathcal{W}$ satisfies property $D_{m,\,f' \circ f,\,g' \circ g}$.
\end{proof}



\section{$m$-Topological complexity of a space}
\label{sec: mTC}

The notion of $m$-topological complexity of a space $X$, denoted by $\TC^m(X)$, was defined by Mac\'ias-Virg\'os, Mosquera-Lois, and Oprea in \cite{m-hom-dist} as the $m$-homotopic distance between projections. 
In this section, we define it as the $m$-sectional category of the free path space fibration and show that both the definitions are equivalent.


\begin{definition}
The $m$-topological complexity of a topological space $X$, denoted by $\TC^m(X)$, is defined as
$$
    \TC^{m}(X) := \sctm\left(\pi_X \colon PX \to X \times X\right).
$$
\end{definition}


The following result recovers the original definition $m$-topological complexity given in \cite[Definition 2.13]{m-hom-dist}.

\begin{proposition}
\label{prop: m-TC(X) = m-hom-dist(pr_1:pr_2)}
For a topological space $X$,
$$
    \TC^m(X) = \Dm(\mathrm{pr}_1, \mathrm{pr}_2),
$$
where $\mathrm{pr}_i \colon X \times X \to X$ denotes the projection map onto the $i$th coordinate.
\end{proposition}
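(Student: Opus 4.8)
The plan is to apply \Cref{thm: m-hom-dist(f:g) = m-secat(Pi_1)} directly, with the roles of the maps chosen so that the associated pullback fibration is exactly $\pi_X$. Take $f = \mathrm{pr}_1$ and $g = \mathrm{pr}_2$ as maps $X \times X \to X$; here the domain space is $X \times X$ and the target is $X$. The combined map $(\mathrm{pr}_1, \mathrm{pr}_2) \colon X \times X \to X \times X$ is the identity map $\id_{X \times X}$.

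With these choices the pullback diagram \eqref{diag: m-hom-dist(f:g) = m-secat(Pi_1)} becomes the pullback of the free path space fibration $\pi_X \colon PX \to X \times X$ along $\id_{X \times X}$. Since pulling back a fibration along the identity returns a canonically fibre-preserving isomorphic copy of the same fibration, the fibration $\Pi_1$ arising in this instance is (isomorphic to) $\pi_X$ itself; in particular $\sctm(\Pi_1) = \sctm(\pi_X)$. Combining this with \Cref{thm: m-hom-dist(f:g) = m-secat(Pi_1)} gives
$$
    \Dm(\mathrm{pr}_1, \mathrm{pr}_2) = \sctm(\Pi_1) = \sctm(\pi_X) = \TC^m(X),
$$
as desired.

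There is essentially no obstacle here; the one point requiring a line of justification is that $\mathcal{P}(\mathrm{pr}_1, \mathrm{pr}_2)$, the pullback of $\pi_X$ along $\id_{X \times X}$, is fibre-preservingly homeomorphic to $PX$ over $X \times X$. This is immediate from the explicit description of the pullback, or it can be deduced from \Cref{prop: m-secat under pullback} together with \Cref{prop: fiber-preserving homotopy invariance}. Alternatively, one may unwind both definitions: an open set $U \subseteq X \times X$ satisfies property $A_{m,\,\pi_X}$ precisely when every map $\phi \colon P \to U$ from an $m$-dimensional CW complex lifts through $\pi_X$, that is, when $\mathrm{pr}_1 \circ \iota_U \circ \phi \simeq \mathrm{pr}_2 \circ \iota_U \circ \phi$ (a lift assigns to each point of $P$ a path between the two images, continuously, which is exactly such a homotopy); this is exactly property $D_{m,\,\mathrm{pr}_1,\,\mathrm{pr}_2}$, and the identification of properties yields the equality of the two invariants at once.
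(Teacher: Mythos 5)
Your proposal is correct and is essentially identical to the paper's proof: both observe that $(\mathrm{pr}_1,\mathrm{pr}_2)=\id_{X\times X}$, so the fibration $\Pi_1$ from \Cref{thm: m-hom-dist(f:g) = m-secat(Pi_1)} coincides with $\pi_X$, and the equality follows immediately. The extra justification you give for identifying the pullback along the identity with $\pi_X$ is fine but not needed beyond a remark.
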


\begin{proof}
Note that $(\mathrm{pr}_1, \mathrm{pr}_2) = \id_{X \times X}$. 
Therefore, the fibration $\Pi_1 \colon \mathcal{P}(\mathrm{pr}_1, \mathrm{pr}_2) \to X \times X$ in \Cref{thm: m-hom-dist(f:g) = m-secat(Pi_1)} is equivalent to $\pi_X \colon X^I \to X \times X$. 
Hence, the desired equality
$$
    \Dm(\mathrm{pr}_1, \mathrm{pr}_2) 
        = \sctm(\Pi_1) 
        = \sctm(\pi_X) 
        = \TC^m(X)
$$
follows from \Cref{thm: m-hom-dist(f:g) = m-secat(Pi_1)}.
\end{proof}

\begin{proposition}
\label{prop: invariance of m-TC}
If $X$ is homotopy equivalent to $Y$, then $\TCm(X) = \TCm(Y)$.
\end{proposition}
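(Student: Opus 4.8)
The plan is to reduce this to the homotopy invariance of the $m$-homotopic distance. By \Cref{prop: m-TC(X) = m-hom-dist(pr_1:pr_2)} we have $\TCm(X) = \Dm(\pr_1^X,\pr_2^X)$ and $\TCm(Y) = \Dm(\pr_1^Y,\pr_2^Y)$, where $\pr_i^X \colon X \times X \to X$ and $\pr_i^Y \colon Y \times Y \to Y$ denote the coordinate projections. So it suffices to prove $\Dm(\pr_1^X,\pr_2^X) = \Dm(\pr_1^Y,\pr_2^Y)$, and for this I would invoke \Cref{prop: invariance of m-hom-dist}.

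First I would fix a homotopy equivalence $\varphi \colon X \to Y$ with homotopy inverse $\psi \colon Y \to X$, so that $\varphi \circ \psi \simeq \id_Y$ and $\psi \circ \varphi \simeq \id_X$. Then $\psi \times \psi \colon Y \times Y \to X \times X$ is a homotopy equivalence (with inverse $\varphi \times \varphi$), and $\varphi \colon X \to Y$ is a homotopy equivalence. I would apply \Cref{prop: invariance of m-hom-dist} with $(f,g) = (\pr_1^X,\pr_2^X)$ and $(f',g') = (\pr_1^Y,\pr_2^Y)$, taking the source equivalence to be $\psi \times \psi \colon Y \times Y \to X \times X$ and the target equivalence to be $\varphi \colon X \to Y$. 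The only thing to check is the compatibility condition $\varphi \circ \pr_i^X \circ (\psi \times \psi) \simeq \pr_i^Y$ for $i = 1,2$; this is immediate, since for $i=1$ and $(y_1,y_2) \in Y \times Y$ we have $\varphi(\pr_1^X(\psi(y_1),\psi(y_2))) = \varphi(\psi(y_1)) \simeq y_1 = \pr_1^Y(y_1,y_2)$ using $\varphi \circ \psi \simeq \id_Y$, and the case $i=2$ is identical on the second coordinate. Then \Cref{prop: invariance of m-hom-dist} yields $\Dm(\pr_1^X,\pr_2^X) = \Dm(\pr_1^Y,\pr_2^Y)$, hence $\TCm(X) = \TCm(Y)$.

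There is no serious obstacle here; the only point requiring a moment's care is getting the variance right when feeding the projections into \Cref{prop: invariance of m-hom-dist} — the equivalence on the source of the projection maps must point from $Y \times Y$ to $X \times X$, while the one on the target points from $X$ to $Y$. As an alternative route one could argue directly from the definition $\TCm(X) = \sctm(\pi_X)$ via the fibre-preserving homotopy invariance of \Cref{prop: fiber-preserving homotopy invariance}: the square relating $\pi_X$ and $\pi_Y$ through $\varphi \times \varphi$ and the induced map on free path spaces commutes up to homotopy, $\varphi \times \varphi$ is a homotopy equivalence, and the path-space map admits a right homotopy inverse; but the argument through $\Dm$ is shorter and self-contained given the results already established.
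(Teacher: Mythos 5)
Your proof is correct and follows essentially the same route as the paper: both reduce to $\TCm = \Dm(\pr_1,\pr_2)$ via \Cref{prop: m-TC(X) = m-hom-dist(pr_1:pr_2)} and then apply \Cref{prop: invariance of m-hom-dist} with the source equivalence $\psi\times\psi\colon Y\times Y\to X\times X$ and target equivalence $\varphi\colon X\to Y$, checking $\varphi\circ\pr_i^X\circ(\psi\times\psi)\simeq\pr_i^Y$. Your care about the direction of the two equivalences matches exactly what the paper does (its $h$ is your $\psi$ and its $k$ is your $\varphi$).
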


\begin{proof}
Suppose $h \colon Y \to X$ is a homotopy equivalence with a homotopy inverse $k \colon X \to Y$.
Then $k \circ \pr_{i,X} \circ (h \times h) \simeq \pr_{i,Y}$ for $i = 1,2$.
Hence, it follows that
$$
    \TCm(X) 
        = \Dm(\pr_{1,X},\pr_{2,X}) 
        = \Dm(\pr_{1,Y},\pr_{2,Y})
        = \TCm(Y)
$$
by \Cref{prop: invariance of m-hom-dist}.
\end{proof}

\begin{proposition}
\label{prop: prod-ineq for m-TC}
If $X \times X$ and $Y \times Y$ are normal spaces, then
$$
    \TC^m(X\times Y) \leq \TC^m(X) + \TCm(Y).
$$
\end{proposition}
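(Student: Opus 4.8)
The plan is to deduce this inequality from the product inequality for $\sctm$ (\Cref{prop: prod-ineq for m-secat}), in exactly the way \Cref{prop: prod-ineq for m-cat} was deduced from it. The crucial point is that the free path space fibration of $X \times Y$ is naturally isomorphic to the product of the free path space fibrations of $X$ and $Y$.

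First I would record the two natural homeomorphisms: $P(X \times Y) \cong PX \times PY$, where a path $\gamma$ in $X \times Y$ corresponds to the pair $(\pr_1 \circ \gamma,\, \pr_2 \circ \gamma)$ (a homeomorphism by the exponential law for the compact--open topology), and $(X \times Y)\times(X \times Y) \cong (X \times X)\times(Y \times Y)$, interchanging the two inner factors. Tracing the assignment $\gamma \mapsto (\gamma(0),\gamma(1))$ through these identifications yields $(\alpha,\beta)\mapsto\big((\alpha(0),\alpha(1)),(\beta(0),\beta(1))\big)$, so under these homeomorphisms $\pi_{X\times Y}$ becomes precisely the product fibration $\pi_X\times\pi_Y$. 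In particular $\pi_{X\times Y}$ and $\pi_X\times\pi_Y$ are fibre-preserving homotopy equivalent.

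Then, combining \Cref{prop: fiber-preserving homotopy invariance} (or \Cref{prop: invariance of m-TC}) with the definition $\TCm(-) = \sctm(\pi_{(-)})$ gives
\[
    \TCm(X\times Y) = \sctm(\pi_{X\times Y}) = \sctm(\pi_X\times\pi_Y).
\]
Since the base spaces $X\times X$ and $Y\times Y$ are normal by hypothesis, \Cref{prop: prod-ineq for m-secat} applies to the fibrations $\pi_X$ and $\pi_Y$ and yields
\[
    \sctm(\pi_X\times\pi_Y)\le\sctm(\pi_X)+\sctm(\pi_Y)=\TCm(X)+\TCm(Y),
\]
which is the claimed bound. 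Alternatively one can argue through \Cref{prop: m-TC(X) = m-hom-dist(pr_1:pr_2)}: precomposing the projections $(X\times Y)\times(X\times Y)\to X\times Y$ with the swap homeomorphism turns them into $\pr_{1,X}\times\pr_{1,Y}$ and $\pr_{2,X}\times\pr_{2,Y}$, and then the invariance of $\Dm$ together with the product inequality for $\Dm$ gives the same conclusion.

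The step demanding the most care is verifying that the two natural homeomorphisms above genuinely constitute a map of fibrations (so that $\pi_{X\times Y}$ is identified with $\pi_X\times\pi_Y$ as a fibration, not merely as a map of underlying spaces); this is a routine diagram chase, entirely parallel to the identity $e_{X\times Y}=e_X\times e_Y$ used in the proof of \Cref{prop: prod-ineq for m-cat}. Everything else is a direct invocation of results already established.
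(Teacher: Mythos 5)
Your proposal is correct and follows exactly the paper's argument: the paper's proof is the one-line observation that, after natural identification, $\pi_{X\times Y}=\pi_X\times\pi_Y$, so the bound follows from \Cref{prop: prod-ineq for m-secat}. You simply spell out the identification (and the fibre-preserving homotopy invariance needed to transfer $\sctm$ across it) in more detail, which is a harmless elaboration of the same route.
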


\begin{proof}
After natural identification of spaces, we have $\pi_{X \times Y} = \pi_{X} \times \pi_{Y}$. 
Hence, the desired result follows from \Cref{prop: prod-ineq for m-secat}.
\end{proof}

\subsection{Bounds on $m$-$\TC$}
\hfill\\ \vspace{-0.7em}

The following proposition, establishing the monotonicity of $\TC^m$, follows directly from \Cref{prop: monotonicity of m-secat}.

\begin{proposition}
\label{prop: monotonicity of m-TC}
Suppose $X$ is a topological space. 
Then
\begin{enumerate}
    \item $\TC^n(X) \leq \TC^m(X)$ for all $n \leq m$.
    \item $\TC^m(X) \leq \TC(X)$ for all $m$.
\end{enumerate}
\end{proposition}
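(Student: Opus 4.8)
The plan is to deduce both inequalities directly from \Cref{prop: monotonicity of m-secat}, using the defining identity $\TC^m(X) = \sctm(\pi_X)$ together with the classical interpretation $\TC(X) = \sct(\pi_X)$ of topological complexity as the sectional category of the free path space fibration $\pi_X \colon PX \to X \times X$. Since $\pi_X$ is a fibration, \Cref{prop: monotonicity of m-secat} applies to it verbatim with $p = \pi_X$, and there is nothing left to do beyond translating through the definitions.

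Concretely, for part (1) I would apply \Cref{prop: monotonicity of m-secat} (1) to $p = \pi_X$: for $n \le m$ this yields $\sct_n(\pi_X) \le \sctm(\pi_X)$, which is exactly $\TC^n(X) \le \TC^m(X)$. For part (2) I would apply \Cref{prop: monotonicity of m-secat} (2) to $p = \pi_X$, obtaining $\sctm(\pi_X) \le \sct(\pi_X)$, i.e. $\TC^m(X) \le \TC(X)$. One could, if one wanted to be more self-contained, phrase this at the level of covers: an open subset of $X \times X$ satisfying property $A_{m,\pi_X}$ also satisfies $A_{n,\pi_X}$ for every $n \le m$, and a subset admitting a genuine continuous section of $\pi_X$ satisfies $A_{m,\pi_X}$ for every $m$; but both observations are already packaged into \Cref{prop: monotonicity of m-secat}.

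There is no genuine obstacle here --- the entire content is bookkeeping. The only point that deserves an explicit one-line mention is that $\pi_X$ is indeed a fibration, so that the cited proposition is applicable; everything else is immediate from the definitions of $\TC^m$ and $\TC$.
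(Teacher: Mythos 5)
Your proposal is correct and matches the paper exactly: the paper also derives both inequalities by applying \Cref{prop: monotonicity of m-secat} to the free path space fibration $\pi_X$, using $\TC^m(X) = \sctm(\pi_X)$ and $\TC(X) = \sct(\pi_X)$. Nothing further is needed.
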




\begin{proposition}
\label{prop: m-cat(X) leq m-TC(X) leq m-cat(X^2)}
For a topological space $X$, we have $\TCm(X) \leq \mcat(X \times X)$.
Moreover,
\begin{enumerate}
    \item if $X$ is path-connected, then $\mcat(X) \leq \TCm(X)$.
    \item if $X$ is path-connected and normal, then $\TCm(X) \leq 2\mcat(X)$.
\end{enumerate}
\end{proposition}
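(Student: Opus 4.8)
The strategy is to reduce each of the three inequalities to facts already proved for $m$-homotopic distance and $m$-category, using the identification $\TCm(X)=\Dm(\pr_1,\pr_2)$ from \Cref{prop: m-TC(X) = m-hom-dist(pr_1:pr_2)}, where $\pr_1,\pr_2\colon X\times X\to X$ are the two projections. For the inequality $\TCm(X)\le\mcat(X\times X)$, I would apply \Cref{cor: m-hom-dist(f:g) leq m-cat(X)} to the pair $\pr_1,\pr_2\colon X\times X\to X$: since the source of these maps is $X\times X$, that corollary gives $\TCm(X)=\Dm(\pr_1,\pr_2)\le\mcat(X\times X)$ at once. (The codomain $X$ need not be path-connected here: as recorded in the remark following that corollary, the path-connectedness hypothesis on the target may be omitted; alternatively, when $X$ is path-connected the map $\pi_X$ is a surjective fibration and one may instead invoke \Cref{cor: m-secat leq m-cat}~(1).)

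For part (1), suppose $X$ is path-connected and fix a point $x_0\in X$. Consider the inclusion $\iota_1\colon X\to X\times X$, $\iota_1(x)=(x,x_0)$. Then $\pr_1\circ\iota_1=\id_X$ and $\pr_2\circ\iota_1=c_{x_0}$, so \Cref{prop: m-hom-dist under composition}~(2) yields
\[
    \Dm(\id_X,\,c_{x_0})=\Dm(\pr_1\circ\iota_1,\,\pr_2\circ\iota_1)\le\Dm(\pr_1,\pr_2)=\TCm(X).
\]
By the symmetry of $\Dm$ together with \Cref{cor: m-hom-dist(*:id) = m-cat(X)} (which is exactly where path-connectedness of $X$ enters), the left-hand side equals $\mcat(X)$, and hence $\mcat(X)\le\TCm(X)$.

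For part (2), assume in addition that $X$ is normal. Then $X\times X$ is a product of two path-connected normal spaces, so \Cref{prop: prod-ineq for m-cat} applies and gives $\mcat(X\times X)\le\mcat(X)+\mcat(X)=2\mcat(X)$. Combining this with the first inequality of the proposition already established yields $\TCm(X)\le\mcat(X\times X)\le 2\mcat(X)$.

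None of the steps is technically deep; the one point that warrants care is the first inequality, where the target of the projection maps is not assumed path-connected, so one must either appeal to the strengthened form of \Cref{cor: m-hom-dist(f:g) leq m-cat(X)} or restrict to path-connected $X$ and use surjectivity of $\pi_X$. I would also double-check that the hypotheses of \Cref{prop: prod-ineq for m-cat} are indeed met in part (2) — namely that $X$ path-connected and normal suffices, and in particular that the argument does not require $X\times X$ itself to be normal.
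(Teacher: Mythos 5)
Your proposal is correct and follows essentially the same route as the paper: the first inequality via \Cref{cor: m-hom-dist(f:g) leq m-cat(X)} applied to $\pr_1,\pr_2$, part (1) by precomposing with an axis inclusion (the paper uses $\iota_2$ where you use $\iota_1$, an immaterial difference) together with \Cref{prop: m-hom-dist under composition} and \Cref{cor: m-hom-dist(*:id) = m-cat(X)}, and part (2) by combining the first inequality with \Cref{prop: prod-ineq for m-cat}, whose hypotheses (each factor path-connected and normal) are indeed what is needed. Your extra caution about the path-connectedness of the target in the first step is well placed but resolved exactly as you indicate.
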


\begin{proof}
The inequality $\TCm(X) \leq \mcat(X \times X)$ follows directly from \Cref{cor: m-hom-dist(f:g) leq m-cat(X)}.
If $X$ is path-connected, then by \Cref{cor: m-hom-dist(*:id) = m-cat(X)}, \Cref{prop: m-hom-dist under composition} and, inequality (1) follows
$$
    \mcat(X) 
        = \Dm(c_{x_0},\id_X) 
        = \Dm(\pr_1 \circ \iota_2, \pr_2 \circ \iota_2)
        \leq \Dm(\pr_1,\pr_2)
        = \TCm(X).
$$
In addition, if $X$ is also normal, then inequality (2) follows from \Cref{prop: prod-ineq for m-cat}.
\end{proof}

\begin{proposition}
\label{prop: cohomological lower bound of m-TC}
Suppose $X$ is a topological space such that the homology group $H_i(X;\Z)$ is finitely generated for all $i$. 
Let $\mathbb{K}$ be a field, and let 
$$
    \smile \colon H^*(X;\mathbb{K}) \otimes_{\mathbb{K}} H^*(X;\mathbb{K}) \to H^*(X;\mathbb{K})
$$
denote the cup product homomorphism.
If there exist elements $u_0,u_1,\dots,u_k$ in $\ker(\smile)$ such that $u_0\cdot u_1 \cdots u_k \neq 0$, then $\TC^{m}(X) \geq k + 1$, where $m = \max \{\deg(u_i) \mid i = 0,1,\dots,k\}$.
\end{proposition}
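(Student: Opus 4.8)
The plan is to deduce this from the cohomological lower bound for the $m$-sectional category, \Cref{prop: cohomological lower bound of m-secat}, applied to the free path space fibration $\pi_X \colon PX \to X \times X$, for which $\TCm(X) = \sctm(\pi_X)$ by definition. The only real work is translating the hypothesis, which is phrased in the tensor-product ring $H^*(X;\mathbb{K}) \otimes_{\mathbb{K}} H^*(X;\mathbb{K})$ with the cup-product map $\smile$, into a statement about honest cohomology classes on $X \times X$ lying in $\ker(\pi_X^*)$ with nonzero product.

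First I would record the standard identifications. The inclusion $X \hookrightarrow PX$ as constant paths is a homotopy equivalence, and its composite with $\pi_X$ is the diagonal $\Delta_X \colon X \to X \times X$; hence, under the induced isomorphism $H^*(PX;\mathbb{K}) \cong H^*(X;\mathbb{K})$, the homomorphism $\pi_X^*$ is identified with $\Delta_X^*$. Next, since each $H_i(X;\Z)$ is finitely generated, each $H^i(X;\mathbb{K})$ is a finite-dimensional $\mathbb{K}$-vector space, so by the Künneth theorem the cohomology cross product
\[
    \kappa \colon H^*(X;\mathbb{K}) \otimes_{\mathbb{K}} H^*(X;\mathbb{K}) \longrightarrow H^*(X \times X;\mathbb{K})
\]
is an isomorphism of graded rings; this is the one place where the finiteness hypothesis on $X$ is used. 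Finally, $\Delta_X^*(a \times b) = a \smile b$, so $\Delta_X^* \circ \kappa = \smile$, and therefore $\kappa$ carries $\ker(\smile)$ isomorphically onto $\ker(\Delta_X^*) = \ker(\pi_X^*)$ under the identification above.

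Given these, the argument is short. We may assume the $u_i$ are homogeneous, with $\deg(u_i)$ their total degree. Put $\bar u_i := \kappa(u_i) \in H^*(X \times X;\mathbb{K})$. Then $\pi_X^*(\bar u_i) = 0$ for each $i$ by the previous paragraph, $\deg(\bar u_i) = \deg(u_i)$, and since $\kappa$ is a ring isomorphism,
\[
    \bar u_0 \smile \bar u_1 \smile \cdots \smile \bar u_k = \kappa(u_0 \cdot u_1 \cdots u_k) \neq 0.
\]
Now \Cref{prop: cohomological lower bound of m-secat}, applied to $\pi_X$ with $R = \mathbb{K}$ and the classes $\bar u_0,\dots,\bar u_k$, yields $\sctm(\pi_X) \geq k+1$ with $m = \max\{\deg(\bar u_i)\} = \max\{\deg(u_i)\}$, i.e.\ $\TCm(X) \geq k+1$. (Alternatively, one can invoke \Cref{prop: cohomological lower bound on Dm} with $f = \pr_1$ and $g = \pr_2$, using the same Künneth translation, and the identity $\pr_1 = \pr_1$, $\pr_2 = \pr_2$ making $(f,g) = \id_{X\times X}$.)

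I do not anticipate a serious obstacle: this is a direct translation through Künneth duality followed by an already-established bound. The only points requiring care are (i) invoking the Künneth isomorphism correctly -- which is exactly why finite generation of $H_*(X;\Z)$ is assumed, as the cohomology cross product need not be surjective without a finiteness hypothesis -- and (ii) checking the compatibility $\Delta_X^* \circ \kappa = \smile$ with the correct Koszul signs, which is harmless for the kernel and for the nonvanishing of the product since no graded elements are commuted past one another.
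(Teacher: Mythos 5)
Your proposal is correct and follows essentially the same route as the paper: identify $\pi_X^*$ with $\Delta_X^*$ via the constant-path homotopy equivalence $X \simeq PX$, use the K\"unneth cross-product isomorphism (where the finite-generation hypothesis enters) to transport $\ker(\smile)$ onto $\ker(\pi_X^*)$, and then invoke \Cref{prop: cohomological lower bound of m-secat}. No gaps.
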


Note that if an element $u_i$ of $H^*(X;\mathbb{K}) \otimes_{\mathbb{K}} H^*(X;\mathbb{K})$ lies in the kernel of $\smile$, it follows that all the summands in the expression of $u_i$ have the same degree. 
Hence, the degree of $u_i$ is well defined.

\begin{proof}
Let $i \colon X \rightarrow PX$ be the homotopy equivalence which maps $x \in X$ to the constant path $c_x$ at $x$. 
If $\Delta_X \colon X \to X \times X$ denotes the diagonal map, then $\pi_X \circ i = \Delta_X$.
Consider the diagram
\[
\begin{tikzcd}
H^*(X;\mathbb{K}) \otimes_{\mathbb{K}} H^*(X;\mathbb{K}) \arrow[r, "\times"] & H^*(X \times X;\mathbb{K}) \arrow[d, "\pi_X^*"'] \arrow[r, "\Delta_X"] & H^*(X;\mathbb{K}) \\
& {H^*(PX;\mathbb{K})\,,} \arrow[ru, "i^{*}"']                           &                  
\end{tikzcd}
\]
where $\times$ is the cross product homomorphism. 
By the K$\ddot{\text{u}}$nneth Theorem \cite[Chapter 7, Theorem 61.6]{munkres2018elements}, the map $\times$ is an $\mathbb{K}$-algebra isomorphism. 
Since $\Delta_X^* \circ \times =\, \smile$ and $\ker (\Delta_X^*) = \ker(\pi_X^*)$, the desired result follows from \Cref{prop: cohomological lower bound of m-secat}.
\end{proof}

\begin{example}
\label{ex: m-TC of complex projective spaces}
\normalfont{
Consider the complex projective space $\mathbb{C}P^n$.
From \cite[Theorem 10]{Farber-TC} and \Cref{prop: cohomological lower bound of m-TC}, we obtain $\TC^2(\mathbb{C}P^n)\geq 2n$. 
Moreover, by \Cref{prop: monotonicity of m-TC} (1), we have $2n \leq \TC^2(\mathbb{C}P^n)\leq \TC^m(\mathbb{C}P^n)\leq \TC(\mathbb{C}P^n) = 2n$ for all $m \geq 2$. 
Further, using \Cref{prop: m-TC(X) = m-hom-dist(pr_1:pr_2)}, we have $\TC^1(\mathbb{C}P^n) = \mathrm{D}_1(\pr_1,\pr_2)$. 
Since $\mathbb{C}P^n$ is simply connected, we have $\pr_1 \circ \phi \simeq \pr_2\circ \phi$ for any map $\phi\colon P\to \mathbb{C}P^n\times \mathbb{C}P^n$ from a $1$-dimensional CW complex $P$, by the cellular approximation theorem. 
Therefore, $\TC^1(\mathbb{C}P^n) = \mathrm{D}_1(\pr_1,\pr_2) = 0$. 
Hence,
$$
\TC^m(\mathbb{C}P^n) = 
\begin{cases}
    0 & \text{ if } m=1,\\
    2n & \text{ if } m\geq 2.
\end{cases}
$$
}
\end{example}

\begin{example}
\label{ex: m-TC of real projective spaces}
\normalfont{
Consider the real projective space $\mathbb{R}P^n$.
Then we have the inequalities $\mcat(\mathbb{R}P^n)\leq \TC^m(\mathbb{R}P^n)\leq \TC(\mathbb{R}P^n)$ for all $m$. 
We know that $\TC(\mathbb{R}P^n) = n$ for $n=1,3,7$ by \cite[Proposition 18]{Farber-Tabachnikov-YuzvinskyTC}, and $\mcat(\mathbb{R}P^n) = n$ for all $m$ by \cite[Example 2.5]{m-hom-dist}. 
Therefore, if $n=1,3,7$, then we have $\TC^m(\mathbb{R}P^n) = n$ for all $m\geq 1$.

Moreover, if $n=2^{r-1}$ where $r\in \mathbb{N}$, then we obtain the following inequality using \Cref{prop: cohomological lower bound of m-TC} and \cite[Corollary 14]{Farber-Tabachnikov-YuzvinskyTC} $$2^r-1\leq \TC^1(\R P^n)\leq \TCm(\R P^n)\leq \TC(\R P^n)\leq 2^r-1.$$ Therefore, $\TC^m(\mathbb{R}P^n) = 2n-1$ for all $m\geq 1$, whenever $n$ is any power of $2$.
}
\end{example}

\begin{example}
\label{ex: m-TC of product of spheres}
\normalfont{
We revisit \Cref{ex: m-cat of product of spheres}. 
Firstly, note that $\TCm(S^{n}) = 0$ for $m < n$, by \Cref{prop: m-cat(X) leq m-TC(X) leq m-cat(X^2)} (2).
By the zero-divisor cup-length computation of spheres in \cite{Farber-TC}, it follows from \Cref{prop: cohomological lower bound of m-TC} that $\TC^{n}(S^{n}) \geq 1$ if $n$ is odd, and $\TC^{n}(S^{n}) \geq 2$ if $n$ is even.
Hence, by \Cref{prop: monotonicity of m-TC}, it follows that
\begin{equation}
\label{eq: TCm of S^n}
\TCm(S^n) = 
\begin{cases}
    0 & \text{ if } m < n,\\ 
    1 & \text{ if } m \geq n \text{ and } n \text{ odd},\\
    2 & \text{ if } m \geq n \text{ and } n \text{ even}.
\end{cases}
\end{equation}

By \Cref{prop: prod-ineq for m-TC} and \Cref{eq: TCm of S^n}, we have 
$$
    \TCm (S^{\overline{n}}) 
        \leq \TCm (S^{n_1}) + \TCm (S^{n_2}) +  \dots + \TCm (S^{n_i})
        = i + l(i)
$$
for $n_i \leq m < n_{i+1}$, where $l(i)$ denotes the number of even dimensional spheres in the set $\{S^{n_1},S^{n_2},\dots,S^{n_{i}}\}$.
Suppose $a_j \in H^{n_j}(S^{\overline{n}};\mathbb{Q})$ denote the pullback of the fundamental class of $S^{n_j}$ under the projection map $S^{\overline{n}} \to S^{n_j}$.
Then 
$$
    \prod_{j=1}^{i} (1 \otimes a_j - a_j \otimes 1)^{\zeta(j)} \in H^{*}(S^{\overline{n}} \times S^{\overline{n}}; \mathbb{Q})
$$
is non zero, where $\zeta(j) = 1$ if $n_j$ is odd, and $\zeta(j) = 2$ if $n_j$ is even.
Using \Cref{prop: cohomological lower bound of m-TC}, we see that $\TC^{n_i}(S^{\overline{n}}) \geq i + l(i)$.
Hence, by \Cref{prop: monotonicity of m-TC}, it follows that $\TCm(S^{\overline{n}}) \geq i + l(i)$ for all $m \geq n_i$.
Hence,
$$
\TCm(S^{\overline{n}}) = 
\begin{cases}
    0 & \text{ if } m < n_1,\\ 
    i+l(i) & \text{ if } n_i \leq m < n_{i+1},\\
    k+l(k) & \text{ if } m \geq n_k.
\end{cases}
$$
}
\end{example}


\begin{example}
\label{ex: m-TC of Moore spaces}
\normalfont{
Consider the Moore space $M(G,n)$, where $G$ is a finitely generated abelian group with positive rank and $n \geq 2$. 
From \Cref{ex: m-cat of Moore spaces} and \Cref{prop: m-cat(X) leq m-TC(X) leq m-cat(X^2)} (2), we have the following inequality 
$$
    \TC^m(M(G,n)) \leq 2 \mcat(M(G,n)) = 
    \begin{cases}
        0 & \text{if } m < n,\\
        2 & \text{if } m \geq n.
    \end{cases}
$$ 
The reduced cohomology ring of $M(G,n)$ with coefficients in the field $\mathbb{K}$ is given by
$$
    \widetilde{H}^{j}(M(G,n);\mathbb{K}) = 
\begin{cases}
    \mathbb{K}^r & \text{if } j=n,\\
    0 & \text{otherwise},
\end{cases}
$$
where $r$ is the rank of $G$.
By a computation similar to that of the zero-divisor cup-length of spheres in \cite{Farber-TC}, it follows from \Cref{prop: cohomological lower bound of m-TC} that $\TC^n(M(G,n)) \geq 1$ if $n$ is odd, and $\TC^n(M(G,n)) \geq 2$ if $n$ is even.
Hence,
$$
\TC^m(M(G,n)) = 
\begin{cases}
 0 & \text{if } m<n,\\
 1\text{ or }2 & \text{if }  m\geq n \text{ and } n \text{ odd},\\
 2 & \text{if } m\geq n \text{ and } n \text{ even}.
\end{cases} 
$$
}
\end{example}

\begin{proposition}
\label{prop: m-TC = TC if F is ''aspherical''}
Suppose $X$ is a CW complex such that $\pi_{k+1}(X) = 0$ for all $k \geq m$, then 
$$
    \TC^m(X) = \TC(X).
$$
In particular, if $X$ is aspherical, then $\TC^m(X) = \TC(X)$ for all $m$.
\end{proposition}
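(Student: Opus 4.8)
The plan is to reduce immediately to Schwarz's result \Cref{prop: m-secat = secat if F is ''apsherical''} via the definition $\TC^m(X) = \sctm(\pi_X)$. First I would recall that, by definition, $\TC^m(X) = \sctm(\pi_X)$ and $\TC(X) = \sct(\pi_X)$, where $\pi_X \colon PX \to X \times X$ is the free path space fibration. The fibre of $\pi_X$ over a point $(x_0,x_0)$ is the based loop space $\Omega X$, so the relevant fibration is $\Omega X \hookrightarrow PX \xrightarrow{\pi_X} X \times X$. Since $X$ is a CW complex, so is $X \times X$ (in the convenient category of spaces, or under the usual local-finiteness assumptions), which is the hypothesis needed to invoke \Cref{prop: m-secat = secat if F is ''apsherical''}.

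Next I would verify the connectivity hypothesis on the fibre: $\pi_k(\Omega X) \cong \pi_{k+1}(X)$ for all $k \geq 0$, and by assumption $\pi_{k+1}(X) = 0$ for all $k \geq m$. Hence $\pi_k(\Omega X) = 0$ for all $k \geq m$. Applying \Cref{prop: m-secat = secat if F is ''apsherical''} to the fibration $\pi_X$ then gives $\sctm(\pi_X) = \sct(\pi_X)$, i.e.\ $\TC^m(X) = \TC(X)$. This mirrors verbatim the arguments already used in \Cref{prop: m-cat = cat if X is ''aspherical''} and \Cref{prop: m-hom-dist = hom-dist if Y is ''aspherical''}.

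For the final clause, if $X$ is aspherical then $\pi_{k+1}(X) = 0$ for all $k \geq 1$, so the hypothesis above is met with $m = 1$, giving $\TC^1(X) = \TC(X)$. Combined with the monotonicity chain $\TC^1(X) \leq \TCm(X) \leq \TC(X)$ from \Cref{prop: monotonicity of m-TC}, this forces $\TCm(X) = \TC(X)$ for every $m$. I do not anticipate a genuine obstacle here; the only point requiring a word of care is that $X \times X$ retains a CW structure so that \Cref{prop: m-secat = secat if F is ''apsherical''} applies, and the identification of the fibre of $\pi_X$ with $\Omega X$ together with the shift $\pi_k(\Omega X) \cong \pi_{k+1}(X)$.
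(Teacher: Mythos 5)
Your proposal is correct and follows essentially the same route as the paper: reduce to $\sctm(\pi_X)=\sct(\pi_X)$, identify the fibre of $\pi_X$ with $\Omega X$, use $\pi_k(\Omega X)\cong\pi_{k+1}(X)=0$ for $k\ge m$, and invoke \Cref{prop: m-secat = secat if F is ''apsherical''}. The extra remarks about the CW structure on $X\times X$ and the monotonicity argument for the aspherical case are harmless refinements of what the paper leaves implicit.
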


\begin{proof}
Note that it is enough to show that $\sctm(\pi_X) = \sct(\pi_X)$. 
As the the fibre $\Omega X$ of $\pi_X$ satisfies $\pi_{k}(\Omega X) = \pi_{k+1}(X) = 0$ for all $k \geq m$, the desired equality follows from \Cref{prop: m-secat = secat if F is ''apsherical''}.
\end{proof}

\begin{example}
\label{ex: m-TC of surfaces}
\normalfont{
The topological complexity of the closed orientable surface $\Sigma_g$ of genus $g$ was computed by Farber in \cite{Farber-TC}. 
Since $\Sigma_g$ is aspherical for all $g \geq 1$, by \Cref{prop: m-cat = cat if X is ''aspherical''} and \Cref{prop: m-TC = TC if F is ''aspherical''}, we obtain
$$  
    \mcat(\Sigma_g) = \ct(\Sigma_g) = 2\text{ for all } g \geq 1,
    \quad \text{and} \quad
    \TC^m(\Sigma_g) = \TC(\Sigma_g) = 
    \begin{cases}
        2 & \text{if }  g=1,\\
        4 & \text{if } g \geq 2,
    \end{cases}
$$ 
for all $m$. 

For the closed non-orientable surface $N_h$ of genus $h$, the topological complexity was computed in \cite{Dr} and \cite{TCKleinbottle}. 
Since $N_h$ is aspherical for all $h \geq 2$, it follows that $\mcat(N_h) = \ct(N_h) = 2$, and $\TCm(N_h) = \TC(N_h) = 4$ for all $h\geq 2$ and all $m$.
}
\end{example}

The following proposition follows from \Cref{prop: secat leq m-secat + [k/m+1]}.

\begin{proposition}
\label{prop: TC leq m-TC + [2k/m+1]}
Suppose $X$ is a $k$-dimensional CW complex. 
Then
$$
    \TC(X) \leq \TCm(X) + \left[\frac{2k}{m+1}\right], 
        \quad \text{and} \quad
    \TC(X) \leq \max\{\TC^{2k-1}(X),2\},
$$
where $[w]$ denotes the greatest integer less than or equal to $w$.
\end{proposition}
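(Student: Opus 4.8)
The plan is to obtain both inequalities as an immediate consequence of the corresponding bound for the sectional category, \Cref{prop: secat leq m-secat + [k/m+1]}, applied to the free path space fibration. First I would recall that, by the definition of the $m$-topological complexity, $\TC(X) = \sct(\pi_X)$ and $\TCm(X) = \sctm(\pi_X)$, where $\pi_X \colon PX \to X \times X$ is the free path space fibration, and more generally $\TC^{j}(X) = \sct_{j}(\pi_X)$ for every $j$. So the whole statement is really a statement about the single fibration $\pi_X$.

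The key observation is that the base of $\pi_X$ is $X \times X$, which is again a CW complex whenever $X$ is, with cells given by products of cells of $X$; in particular, if $X$ is $k$-dimensional then $X \times X$ is $2k$-dimensional. Applying \Cref{prop: secat leq m-secat + [k/m+1]} to the fibration $p = \pi_X$ over the $2k$-dimensional CW complex $B = X \times X$ then yields
\[
\sct(\pi_X) \leq \sctm(\pi_X) + \left[\frac{2k}{m+1}\right]
\qquad\text{and}\qquad
\sct(\pi_X) \leq \max\{\sct_{2k-1}(\pi_X),\, 2\}.
\]
Rewriting the left-hand sides as $\TC(X)$ and the right-hand terms as $\TCm(X)$, respectively $\TC^{2k-1}(X)$, gives exactly the two asserted inequalities, so no further argument is needed.

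There is essentially no genuine obstacle here; the only point that deserves a word of care is the topology on the product $X \times X$, since for a general CW complex $X$ the product topology and the CW topology on $X \times X$ need not coincide (they do agree when $X$ is locally finite). As is standard, $X \times X$ is understood to carry its CW structure of dimension $2k$, and with this convention the substitution above is completely routine. If one wished to be scrupulous, one could also pass to a homotopy equivalent finite-dimensional CW model and invoke \Cref{prop: invariance of m-TC} together with the fibre-preserving homotopy invariance of $\sctm$ from \Cref{prop: fiber-preserving homotopy invariance}, exactly as in \Cref{cor: secat leq m-secat + [hdim/m+1]}.
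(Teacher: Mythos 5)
Your argument is exactly the one the paper uses: apply \Cref{prop: secat leq m-secat + [k/m+1]} to the free path space fibration $\pi_X$ over the $2k$-dimensional CW complex $X \times X$ and translate $\sct(\pi_X)$, $\sct_j(\pi_X)$ into $\TC(X)$, $\TC^j(X)$. Your extra remark about the CW structure on the product (or passing to a CW model via \Cref{prop: invariance of m-TC}) is a sensible precaution but changes nothing essential.
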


Using similar arguments to those in \Cref{cor: secat leq m-secat + [hdim/m+1]}, together with previous proposition and the invariance of $m$-topological complexity established in \Cref{prop: invariance of m-TC}, we get the following result.

\begin{corollary}
\label{cor: TC leq m-TC + [2hdim/m+1]}
Suppose $X$ is a finite dimensional CW complex. 
Then
$$
    \TC(X) \leq \TCm(X) + \left[\frac{2\hdim(X)}{m+1}\right],
$$
where $[w]$ denotes the greatest integer less than or equal to $w$.
In particular,
$$
    \TCm(X) = \TC(X)
$$
for all $m \geq 2 \hdim(X)$.
\end{corollary}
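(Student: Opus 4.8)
The plan is to follow the template of the proof of \Cref{cor: secat leq m-secat + [hdim/m+1]}, with the sectional-category inputs replaced by their $\TC$-analogues. First I would pick a finite-dimensional CW complex $X'$ homotopy equivalent to $X$ with $\dim(X') = \hdim(X)$; such a model exists by the very definition of homotopy dimension. Applying \Cref{prop: TC leq m-TC + [2k/m+1]} to $X'$ with $k = \dim(X')$ gives
\[
    \TC(X') \leq \TCm(X') + \left[\frac{2\hdim(X)}{m+1}\right].
\]

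Next I would transport this estimate back to $X$. By \Cref{prop: invariance of m-TC} we have $\TCm(X') = \TCm(X)$, and by the homotopy invariance of the classical topological complexity---which, just like the identity $\sct(p) = \sct(p')$ used in \Cref{cor: secat leq m-secat + [hdim/m+1]}, follows from \Cref{prop: fiber-preserving homotopy invariance} applied to the free path-space fibration of $X$ and of $X'$---we have $\TC(X') = \TC(X)$. Substituting these two equalities into the displayed inequality yields
\[
    \TC(X) \leq \TCm(X) + \left[\frac{2\hdim(X)}{m+1}\right],
\]
which is the asserted bound.

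For the final assertion, suppose $m \geq 2\hdim(X)$. Then $2\hdim(X) < m+1$, so the integer part $\left[\frac{2\hdim(X)}{m+1}\right]$ vanishes and the bound just obtained reduces to $\TC(X) \leq \TCm(X)$. Combining this with the reverse inequality $\TCm(X) \leq \TC(X)$ of \Cref{prop: monotonicity of m-TC}(2) gives $\TCm(X) = \TC(X)$. I do not anticipate any genuine difficulty: the only points needing a word of justification are the existence of a CW model of $X$ of dimension exactly $\hdim(X)$ and the homotopy invariance of $\TC$, both of which are standard and exactly parallel the ingredients already used for $\sctm$ in \Cref{cor: secat leq m-secat + [hdim/m+1]}.
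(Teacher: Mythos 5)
Your proposal is correct and follows exactly the route the paper intends: pass to a CW model $X'$ of dimension $\hdim(X)$, apply \Cref{prop: TC leq m-TC + [2k/m+1]} there, and transfer back via \Cref{prop: invariance of m-TC} and the homotopy invariance of $\TC$, mirroring the argument of \Cref{cor: secat leq m-secat + [hdim/m+1]}. The deduction of the final equality from the vanishing of the floor term together with \Cref{prop: monotonicity of m-TC}(2) is likewise the standard conclusion.
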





\begin{proposition}
For continuous maps $f,g \colon X \to Y$, we have
$$
\Dm(f,g) \leq \min\{\mcat(X), \TCm(Y) \}.
$$ 
\end{proposition}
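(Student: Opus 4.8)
The plan is to prove the two inequalities $\Dm(f,g)\le\mcat(X)$ and $\Dm(f,g)\le\TCm(Y)$ separately; the asserted bound is then their minimum.

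The first inequality is essentially \Cref{cor: m-hom-dist(f:g) leq m-cat(X)} (with the path-connectedness hypothesis on $Y$ removed, as noted in the remark following that corollary and in \cite[Lemma 2.10]{m-hom-dist}). Concretely, the observation is that property $B_m$ implies property $D_{m,\,f,\,g}$: if an open set $U\subseteq X$ satisfies $B_m$ and $\phi\colon P\to U$ is a map from an $m$-dimensional CW complex, then $\iota_U\circ\phi$ is nullhomotopic, hence so are $f|_U\circ\phi=f\circ\iota_U\circ\phi$ and $g|_U\circ\phi=g\circ\iota_U\circ\phi$, and these two maps are therefore homotopic. Consequently any open cover realizing $\mcat(X)$ is an open cover realizing $\Dm(f,g)$.

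For the second inequality, the key observation is the factorization $f=\pr_1\circ(f,g)$ and $g=\pr_2\circ(f,g)$, where $(f,g)\colon X\to Y\times Y$ and $\pr_i\colon Y\times Y\to Y$ denotes the projection onto the $i$th coordinate. Applying \Cref{prop: m-hom-dist under composition}(2) to the maps $\pr_1,\pr_2\colon Y\times Y\to Y$ precomposed with $(f,g)$, and then \Cref{prop: m-TC(X) = m-hom-dist(pr_1:pr_2)}, yields
\[
    \Dm(f,g)=\Dm\bigl(\pr_1\circ(f,g),\,\pr_2\circ(f,g)\bigr)\le\Dm(\pr_1,\pr_2)=\TCm(Y).
\]

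I do not anticipate a genuine obstacle: each half reduces immediately to a result already established in the paper. The only point requiring a little care occurs in the first bound, namely ensuring that the two nullhomotopic composites $f|_U\circ\phi$ and $g|_U\circ\phi$ are actually homotopic to one another (which is automatic once the target admits enough path-connectedness, i.e.\ that the hypotheses of \Cref{cor: m-hom-dist(f:g) leq m-cat(X)} are in force when invoking it).
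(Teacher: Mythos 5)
Your proposal is correct. The first inequality is handled exactly as in the paper, by invoking \Cref{cor: m-hom-dist(f:g) leq m-cat(X)} (and you are right to flag the path-connectedness hypothesis on $Y$; the paper cites the same corollary without comment, relying on the remark that the hypothesis can be dropped via \cite[Lemma 2.10]{m-hom-dist}, so you are on equal footing there). For the second inequality your route differs from the paper's: you use the factorization $f=\pr_1\circ(f,g)$, $g=\pr_2\circ(f,g)$ together with the precomposition monotonicity $\Dm(f\circ k,g\circ k)\le\Dm(f,g)$ from \Cref{prop: m-hom-dist under composition}(2) and the identification $\TCm(Y)=\Dm(\pr_1,\pr_2)$, whereas the paper works at the level of fibrations, writing $\Dm(f,g)=\sctm(\Pi_1)$ via \Cref{thm: m-hom-dist(f:g) = m-secat(Pi_1)} and then applying the pullback monotonicity of $\sctm$ (\Cref{prop: m-secat under pullback}) to the pullback square defining $\Pi_1$ over $\pi_Y$. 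The two arguments are structurally parallel --- both exploit that $(f,g)$ pulls back the free path fibration on $Y\times Y$ --- but yours is slightly more elementary in that it stays entirely within the language of open covers and homotopic distance, at the cost of leaning on a property the paper imports from \cite{m-hom-dist} without proof; the paper's version instead showcases the $\sctm$-theoretic machinery it has built. Either is a complete proof.
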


\begin{proof}
It is enough to show that $\Dm(f,g) \leq \TCm(Y)$, since $\Dm(f,g) \leq \mcat(X)$ follows from \Cref{cor: m-hom-dist(f:g) leq m-cat(X)}.
Hence, the desired inequality 
$$
    \Dm(f,g) = \sctm(\Pi_1) \leq \sctm(\pi_Y) = \TCm(Y)
$$  
follows from \Cref{thm: m-hom-dist(f:g) = m-secat(Pi_1)} and \Cref{prop: m-secat under pullback}.
\end{proof}

\section{H-spaces}\label{sec: H-spaces}

In this section, we study the $m$-homotopic distance between maps whose domain and/or codomain are $H$-spaces.
Recall that a topological space $X$ is called an \emph{$H$-space} if it is equipped with
\begin{itemize}
    \item a map $\mu \colon X \times X \to X$, denoted by $\mu(x,y) := x \cdot y$, called the multiplication, and
    
    
    \item an identity element $x_0 \in X$ 
\end{itemize} 
such that 
    $\mu \circ \iota_1 \simeq \id_X$, where $\iota_1 \colon X \to X \times X$ is the inclusion defined by $\iota_1(x) = (x,x_0)$. 
An $H$-space will be denoted by $(X,\mu, x_0)$.


\begin{definition}
Let $(X,\mu,x_0)$ be an $H$-space. 
A division on $X$ is a map $\delta \colon X \times X \to X$, denoted by $\delta(x,y) = x^{-1} \cdot y$, such that $\mu \circ (\pr_1,\delta) \simeq \pr_2\,$; where $\pr_1, \pr_2 \colon X \times X \to X$ denote the projection maps onto the first and the second coordinates, respectively. 
An $H$-space equipped with a division map will be denoted by $(X,\mu,\delta,x_0)$.
\end{definition}






The following theorem is the $m$-category analogue of \cite[Theorem 4.1]{hom-dist-bw-maps}.

\begin{theorem}
\label{thm: H-space m-hom-dist(f:g) leq m-cat(X)}
Let $(X,\mu,\delta,x_0)$ be a path-connected $H$-space and let $f,g \colon X \times X \to X$ be two maps. 
Then 
$$
    \Dm(f,g) \leq \mcat(X).
$$
In particular, $\Dm(\mu,\delta) \leq \mcat(X)$.
\end{theorem}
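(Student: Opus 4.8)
The plan is to pull back a categorical cover of $X$ along a suitable auxiliary map $X \times X \to X$ built from the division, and then use the division and unit identities of the $H$-space to convert an $m$-nullhomotopy of the auxiliary map into the desired $m$-homotopy between $f$ and $g$. Concretely, set $k = \ct_m(X)$ (assuming it is finite, otherwise there is nothing to prove) and fix an open cover $\{V_0, \dots, V_k\}$ of $X$ satisfying property $B_m$. Define $q := \delta \circ (f,g) \colon X \times X \to X$, so that $q(x,y) = f(x,y)^{-1}\cdot g(x,y)$, and put $U_i := q^{-1}(V_i)$. These sets are open and cover $X \times X$ since the $V_i$ cover $X$, so it suffices to show that each $U_i$ satisfies property $D_{m,\,f,\,g}$.

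To verify this, fix a map $\phi \colon P \to U_i$ from an $m$-dimensional CW complex $P$ and write $a := f \circ \iota_{U_i} \circ \phi$ and $b := g \circ \iota_{U_i} \circ \phi$, both maps $P \to X$. Then $\delta \circ (a,b) = q \circ \iota_{U_i} \circ \phi$ factors through $V_i$; since $V_i$ satisfies $B_m$, this composite is nullhomotopic, and because $X$ is path-connected it is homotopic to $c_{x_0}$. Precomposing the division identity $\mu \circ (\pr_1,\delta) \simeq \pr_2$ with the map $(a,b) \colon P \to X \times X$ gives $b \simeq \mu \circ (a, \delta \circ (a,b))$. Replacing $\delta \circ (a,b)$ by $c_{x_0}$ up to homotopy, noting that $(a, c_{x_0}) = \iota_1 \circ a$, and invoking $\mu \circ \iota_1 \simeq \id_X$, we obtain $\mu \circ (a, \delta \circ (a,b)) \simeq \mu \circ \iota_1 \circ a \simeq a$. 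Chaining these homotopies yields $a \simeq b$, i.e. $f \circ \iota_{U_i} \circ \phi \simeq g \circ \iota_{U_i} \circ \phi$, which is precisely property $D_{m,\,f,\,g}$ for $U_i$. Hence $\Dm(f,g) \le k = \ct_m(X)$, and taking $f = \mu$, $g = \delta$ gives the stated special case $\Dm(\mu,\delta) \le \ct_m(X)$.

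Once the auxiliary map $q = \delta \circ (f,g)$ is chosen, every step is routine, so there is no genuine obstacle; the only points needing care are that the pulled-back family $\{U_i\}$ actually covers $X \times X$ (which only uses that $\{V_i\}$ covers $X$, not any surjectivity), and that path-connectedness of $X$ is exactly what upgrades a nullhomotopy to a homotopy to the constant map at the chosen unit $x_0$, which is what allows the unit identity $\mu \circ \iota_1 \simeq \id_X$ to be applied at the end.
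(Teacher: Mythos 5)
Your proposal is correct and follows essentially the same route as the paper: pull back a cover satisfying property $B_m$ along the auxiliary map $\delta \circ (f,g)$, use $B_m$ plus path-connectedness to nullhomotope $\delta\circ(f,g)$ on each piece to the constant $c_{x_0}$, and then apply the division identity $\mu\circ(\pr_1,\delta)\simeq \pr_2$ together with the unit identity $\mu\circ\iota_1\simeq\id_X$ to conclude $f$ and $g$ agree up to homotopy after precomposition with any map from an $m$-dimensional complex. No substantive differences from the paper's argument.
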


\begin{proof}
Let $U$ be an open subset of $X$ that satisfies property $B_m$, and let $V:= \left(\delta\circ (f,g)\right)^{-1}(U)$.
If $q \colon Q\to V$ be a map from an $m$-dimensional complex $Q$, then
\begin{align*}
g \circ \iota_V \circ q 
     = \pr_2 \circ (f,g)\circ \iota_V\circ q
     \simeq \mu \circ (\pr_1,\delta) \circ (f,g) \circ \iota_V \circ q
     = \mu \circ (f,\delta\circ(f,g)) \circ \iota_V \circ q.
\end{align*}
Since $\delta \circ (f,g) \circ \iota_V \circ q \colon Q \to X$ factors through $\iota_U$ and $Q$ is a $m$-dimensional CW-complex, it follows that 
$
    \delta \circ (f,g) \circ \iota_V \circ q
$
is nullhomotopic. 
In particular, since $X$ is path-connected, it follows that $\delta \circ (f,g) \circ \iota_V \circ q$ is homotopic to the constant map $c_{x_0} \circ f \circ \iota_V \circ q$, where $c_{x_0} \colon X \to X$ is the constant map which takes the value $x_0 \in X$.
Therefore, 
\begin{align*}
\mu \circ (f,\delta\circ(f,g)) \circ \iota_V \circ q
    & ~\simeq~ \mu \circ (f, c_{x_0} \circ f)  \circ \iota_V \circ q\\
    & ~\simeq~ \mu \circ (\id_X, c_{x_0}) \circ f  \circ \iota_V \circ q\\
    & ~=~ \mu \circ \iota_1 \circ f \circ \iota_V \circ q \\
    & ~\simeq~ \id_X \circ f \circ \iota_V \circ q\\
    & ~=~ f \circ \iota_V \circ q.
\end{align*}
Hence, $f \circ \iota_V \circ q \simeq g \circ \iota_V \circ q$, and by applying the argument to open covers, we obtain the desired result.
\end{proof}

The following result is the $m$-category analogue of \cite[Corollary 4.2]{hom-dist-bw-maps}.

\begin{corollary}\label{cor: TCm leq catm for H spaces}
Let $(X,\mu,\delta,x_0)$ be a path-connected $H$-space. Then 
$$
    \TCm(X) = \mcat(X).
$$    
\end{corollary}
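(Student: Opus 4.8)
The plan is to sandwich $\TCm(X)$ between $\mcat(X)$ from both sides using results already established in the excerpt.

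First I would invoke \Cref{prop: m-TC(X) = m-hom-dist(pr_1:pr_2)} to rewrite $\TCm(X) = \Dm(\pr_1,\pr_2)$, where $\pr_1,\pr_2 \colon X \times X \to X$ are the projections. Since these are maps from $X\times X$ to $X$ and $(X,\mu,\delta,x_0)$ is a path-connected $H$-space with a division, \Cref{thm: H-space m-hom-dist(f:g) leq m-cat(X)} applies directly with $f=\pr_1$, $g=\pr_2$, yielding
$$
\TCm(X) = \Dm(\pr_1,\pr_2) \leq \mcat(X).
$$

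For the reverse inequality I would appeal to \Cref{prop: m-cat(X) leq m-TC(X) leq m-cat(X^2)}(1): since $X$ is path-connected, $\mcat(X) \leq \TCm(X)$. (Recall that proof itself writes $\mcat(X) = \Dm(c_{x_0},\id_X) = \Dm(\pr_1\circ\iota_2,\pr_2\circ\iota_2) \leq \Dm(\pr_1,\pr_2) = \TCm(X)$, using \Cref{cor: m-hom-dist(*:id) = m-cat(X)} and \Cref{prop: m-hom-dist under composition}; one could also cite that chain directly.) Combining the two inequalities gives $\TCm(X) = \mcat(X)$.

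There is essentially no real obstacle here beyond bookkeeping: the content is entirely carried by \Cref{thm: H-space m-hom-dist(f:g) leq m-cat(X)}, and the only thing to check is that the hypotheses of the two cited results are satisfied — namely that $X$ is path-connected (assumed) and that it carries both a multiplication and a division map (built into the notation $(X,\mu,\delta,x_0)$). So the proof reduces to citing \Cref{prop: m-TC(X) = m-hom-dist(pr_1:pr_2)}, \Cref{thm: H-space m-hom-dist(f:g) leq m-cat(X)}, and \Cref{prop: m-cat(X) leq m-TC(X) leq m-cat(X^2)}(1) and noting that the resulting inequalities are mutually opposite.
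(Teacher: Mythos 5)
Your proof is correct and follows essentially the same route as the paper: the key inequality $\TCm(X)=\Dm(\pr_1,\pr_2)\le\mcat(X)$ comes from \Cref{thm: H-space m-hom-dist(f:g) leq m-cat(X)} in both cases. The only (cosmetic) difference is that for the reverse inequality the paper cites an external result, whereas you use the paper's own \Cref{prop: m-cat(X) leq m-TC(X) leq m-cat(X^2)}(1), which is equally valid and arguably more self-contained.
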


\begin{proof}
By \Cref{thm: H-space m-hom-dist(f:g) leq m-cat(X)}, we have 
$
    \TCm(X) = \Dm(\pr_1,\pr_2) \leq \mcat(X).
$
The reverse inequality follows from \cite[Theorem 2.15]{m-hom-dist}.
\end{proof}

The following result is the $m$-category analogue of \cite[Proposition 4.3]{hom-dist-bw-maps}.

\begin{proposition}
\label{prop: H-space m-hom-dist(fh:gh) leq m-hom-dist(f:g)}
Let $f,g,h \colon X \to Y$ be maps, where $(Y,\mu,y_0)$ is a $H$-space. 
Then 
$$
    \Dm(f \cdot h, g \cdot h) \leq \Dm(f,g).
$$  
\end{proposition}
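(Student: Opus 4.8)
The plan is to show that \emph{any} open cover of $X$ witnessing $\Dm(f,g)\le k$ also witnesses $\Dm(f\cdot h,\,g\cdot h)\le k$, where $f\cdot h$ denotes the pointwise product $\mu\circ(f,h)\colon X\to Y$ (so $(f\cdot h)(x)=f(x)\cdot h(x)$), and similarly for $g\cdot h$. Concretely, I would fix an open cover $\mathcal{U}=\{U_i\}_{i=0}^{k}$ of $X$ satisfying property $D_{m,\,f,\,g}$ and prove that each $U_i$ satisfies property $D_{m,\,f\cdot h,\,g\cdot h}$; the proposition then follows at once from the definition of $\Dm$.

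First I would fix an index $i$ and a map $\phi\colon P\to U_i$ from an $m$-dimensional CW complex $P$. By property $D_{m,\,f,\,g}$ there is a homotopy $H\colon P\times I\to Y$ with $H_0=\left.f\right|_{U_i}\circ\phi$ and $H_1=\left.g\right|_{U_i}\circ\phi$. The key (and only) step is to ``multiply this homotopy on the right by the fixed map $\left.h\right|_{U_i}\circ\phi$'': define $G\colon P\times I\to Y$ by
\[
    G(x,t):=\mu\bigl(H(x,t),\,(h\circ\iota_{U_i}\circ\phi)(x)\bigr).
\]
Continuity of $G$ is immediate, and evaluating at the endpoints while unwinding the definitions gives $G_0=\mu\circ\bigl(\left.f\right|_{U_i}\circ\phi,\ \left.h\right|_{U_i}\circ\phi\bigr)=(f\cdot h)\big|_{U_i}\circ\phi$ and, likewise, $G_1=(g\cdot h)\big|_{U_i}\circ\phi$. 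Hence $(f\cdot h)\big|_{U_i}\circ\phi\simeq(g\cdot h)\big|_{U_i}\circ\phi$, so $U_i$ satisfies property $D_{m,\,f\cdot h,\,g\cdot h}$. Running this over all $i$ produces an open cover of $X$ by $k+1$ sets with property $D_{m,\,f\cdot h,\,g\cdot h}$, i.e.\ $\Dm(f\cdot h,\,g\cdot h)\le k=\Dm(f,g)$.

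I do not expect a genuine obstacle here: the argument is formal, and the $H$-space structure enters only through the multiplication map $\mu$ — no homotopy-associativity, homotopy-commutativity, inverses, division, or path-connectedness of $Y$ is needed. The only thing requiring a little care is the bookkeeping identity $\mu\circ(f,h)\big|_{U_i}\circ\phi=\mu\circ\bigl(\left.f\right|_{U_i}\circ\phi,\ \left.h\right|_{U_i}\circ\phi\bigr)$ and the observation that precomposition with $\phi$ and restriction to $U_i$ commute with forming the product against $h$; once this is set up, the homotopy $G$ does all the work. Conceptually, the proposition is just the statement that property $D_{m,\,\cdot\,,\,\cdot}$ is stable under the operation $(-)\cdot h$, because both precomposition and pointwise $H$-multiplication preserve homotopy.
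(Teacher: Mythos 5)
Your proof is correct. Where the paper disposes of this in one line by writing $f\cdot h=\mu\circ(f\times h)\circ\Delta$ and then chaining two previously established facts --- the composition monotonicity $\Dm(u\circ F\circ v,\,u\circ G\circ v)\leq\Dm(F,G)$ and the product invariance $\Dm(f\times h,\,g\times h)=\Dm(f,g)$ --- you instead verify the statement directly at the level of covers, by taking the homotopy $H$ witnessing property $D_{m,\,f,\,g}$ on $U_i$ and multiplying it pointwise on the right by the fixed map $h\circ\iota_{U_i}\circ\phi$ via $\mu$. The two arguments encode the same idea (pointwise $H$-multiplication by a fixed map preserves homotopies and hence the defining property of $\Dm$), but yours is self-contained and more elementary, whereas the paper's is a formal reduction that reuses its general machinery; your version also makes transparent your (correct) observation that only continuity of $\mu$ is used here --- neither the unit, nor a division, nor path-connectedness of $Y$ enters. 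The only cost of your route is that one re-proves by hand a special case of what the cited lemmas already give.
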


\begin{proof}
Let $\Delta \colon X \to X \times X$ be the diagonal map.
By definition, $f \cdot h = \mu \circ (f \times h)\circ \Delta$ and $g \cdot h = \mu\circ (g\times h)\circ \Delta$. 
Therefore, the desired inequality 
$$
    \Dm(f\cdot h, g\cdot h) = 
        \Dm(\mu\circ (f\times h)\circ \Delta, \mu\circ (g\times h)\circ \Delta)
        \leq \Dm(f\times h, g\times h)
        = \Dm(f,g).
$$
from \Cref{prop: m-hom-dist under composition} and \Cref{prop: m-hom-dist under product}.
\end{proof}

If $X$ is a normal space, the preceding proposition generalizes as follows.

\begin{proposition}
Let $f,g,h,h' \colon X\to Y$ be fibrewise maps, where $X$ is a normal space and $(Y,\mu,x_0)$ is an $H$-space. 
Then
$$
    \Dm(f \cdot h, g \cdot h') \leq \Dm(f,g) + \Dm(h,h').
$$  
\end{proposition}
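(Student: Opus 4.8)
The plan is to insert the intermediate map $g \cdot h$ between $f \cdot h$ and $g \cdot h'$ and invoke the triangle inequality for $\Dm$, which is available precisely because $X$ is assumed normal. (Here $f,g,h,h'$ are treated as continuous maps $X \to Y$.) Concretely, by \Cref{prop: triangle-inequality},
$$
    \Dm(f \cdot h,\, g \cdot h') \ \leq\ \Dm(f \cdot h,\, g \cdot h) + \Dm(g \cdot h,\, g \cdot h'),
$$
so it suffices to bound the first summand by $\Dm(f,g)$ and the second by $\Dm(h,h')$.

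The first bound is exactly \Cref{prop: H-space m-hom-dist(fh:gh) leq m-hom-dist(f:g)}: $\Dm(f \cdot h,\, g \cdot h) \leq \Dm(f,g)$. For the second bound I would rerun the same argument with the two factors interchanged. Writing $\Delta \colon X \to X \times X$ for the diagonal, one has $g \cdot h = \mu \circ (g \times h) \circ \Delta$ and $g \cdot h' = \mu \circ (g \times h') \circ \Delta$, so by \Cref{prop: m-hom-dist under composition}, items (1) and (2), followed by \Cref{prop: m-hom-dist under product} applied with the common factor $g$ now on the left,
$$
    \Dm(g \cdot h,\, g \cdot h') = \Dm\big(\mu \circ (g \times h) \circ \Delta,\ \mu \circ (g \times h') \circ \Delta\big) \leq \Dm(g \times h,\, g \times h') = \Dm(h,h').
$$
Combining the three displays gives $\Dm(f \cdot h,\, g \cdot h') \leq \Dm(f,g) + \Dm(h,h')$, as claimed.

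I do not anticipate a genuine obstacle here, since the statement is a formal consequence of results already in place; the only point worth flagging is that $\mu$ need not be homotopy-commutative, so the second bound cannot be deduced from the first by mere "symmetry" — instead one relies on the fact that \Cref{prop: m-hom-dist under product} handles a common factor on either side of the product. Note also that only the multiplication $\mu$ is used in this proof; the division map $\delta$ plays no role, so the hypothesis that $Y$ admits a division could in fact be dropped from the statement.
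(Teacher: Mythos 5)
Your proof is correct, and it uses the same three ingredients as the paper (the triangle inequality \Cref{prop: triangle-inequality}, the composition inequalities \Cref{prop: m-hom-dist under composition}, and the product identity \Cref{prop: m-hom-dist under product}), but it applies them in a different order. The paper first reduces via $f\cdot h=\mu\circ(f\times h)\circ\Delta$ to $\Dm(f\times h,\,g\times h')$ and only then inserts the intermediate map $g\times h$, so its triangle inequality is invoked for maps $X\times X\to Y\times Y$; you instead insert the intermediate map $g\cdot h$ at the level of maps $X\to Y$ and then bound each summand separately. This is a genuine (if small) advantage of your ordering: \Cref{prop: triangle-inequality} requires the \emph{domain} of the maps to be normal, so the paper's chain literally needs $X\times X$ normal (which does not follow from normality of $X$ in general), whereas your version uses only the stated hypothesis that $X$ is normal. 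Your handling of the second summand is also right: since $\mu$ need not be homotopy-commutative you cannot quote \Cref{prop: H-space m-hom-dist(fh:gh) leq m-hom-dist(f:g)} by symmetry, but \Cref{prop: m-hom-dist under product} gives $\Dm(g\times h,\,g\times h')=\Dm(h,h')$ with the common factor on the left, which is all you need. One small correction to your closing remark: the statement as given already assumes only an $H$-space structure $(Y,\mu,x_0)$ with no division map, so there is no division hypothesis to drop (the word ``fibrewise'' in the statement is a stray artifact, as you implicitly noted by treating $f,g,h,h'$ as ordinary continuous maps).
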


\begin{proof}
The desired inequality
\begin{align*}
\Dm(f\cdot h, g\cdot h') 
    & = \Dm( \mu \circ (f \times h) \circ \Delta, \mu \circ (g \times h')\circ \Delta)\\
    & \leq \Dm(f \times h, g \times h') \\
    & \leq \Dm(f \times h, g \times h) + \Dm(g \times h, g \times h')\\
    & = \Dm(f,g) + \Dm(h,h').
\end{align*}
follows from \Cref{prop: m-hom-dist under composition}, \Cref{prop: triangle-inequality} and \Cref{prop: m-hom-dist under product}
\end{proof}

\begin{corollary}
For a topological group $G$, we have 
$$
    \Dm(\mu, \delta) = \Dm(\id_G, \mathbf{I}),
$$
where $\mathbf{I} \colon G \to G$ is the inversion map defined by $\mathbf{I}(g) = g^{-1}$. 
\end{corollary}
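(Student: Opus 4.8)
The plan is to reduce everything to the composition and product formulas for $\Dm$ already proved above. Write $e \in G$ for the identity element. Since $G$ is a topological group, the inversion $\mathbf{I}$, the multiplication $\mu$, and the division map $\delta(x,y) = x^{-1}\cdot y = \mu(x^{-1},y)$ are all continuous, and the two inequalities $\Dm(\mu,\delta) \le \Dm(\id_G,\mathbf{I})$ and $\Dm(\id_G,\mathbf{I}) \le \Dm(\mu,\delta)$ will come from exhibiting each pair of maps as composites built out of the other pair.

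For $\Dm(\mu,\delta) \le \Dm(\id_G,\mathbf{I})$, I would observe the strict identities $\mu = \mu \circ (\id_G \times \id_G)$ and $\delta = \mu \circ (\mathbf{I} \times \id_G)$ as maps $G \times G \to G$. Post-composing with $\mu$ and invoking \Cref{prop: m-hom-dist under composition}(1) gives $\Dm(\mu,\delta) = \Dm\bigl(\mu \circ (\id_G \times \id_G),\, \mu \circ (\mathbf{I} \times \id_G)\bigr) \le \Dm(\id_G \times \id_G,\, \mathbf{I} \times \id_G)$. Then \Cref{prop: m-hom-dist under product}, applied with common right factor $\id_G$, yields $\Dm(\id_G \times \id_G,\, \mathbf{I} \times \id_G) = \Dm(\id_G,\mathbf{I})$.

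For the reverse inequality, I would use the section $j \colon G \to G \times G$, $j(x) = (x,e)$. Then $\mu \circ j = \id_G$ since $x \cdot e = x$, and $\delta \circ j = \mathbf{I}$ since $x^{-1}\cdot e = x^{-1}$. Pre-composing with $j$ and applying \Cref{prop: m-hom-dist under composition}(2) gives $\Dm(\id_G,\mathbf{I}) = \Dm(\mu \circ j,\, \delta \circ j) \le \Dm(\mu,\delta)$. Combining the two inequalities completes the argument.

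There is essentially no obstacle beyond choosing the correct auxiliary maps, namely $\mathbf{I} \times \id_G$ and the section $x \mapsto (x,e)$; all the analytic content is absorbed into the already-established monotonicity of $\Dm$ under pre- and post-composition and its product formula. The only thing to check carefully is that the identities $\delta = \mu \circ (\mathbf{I}\times\id_G)$, $\mu \circ j = \id_G$, and $\delta \circ j = \mathbf{I}$ hold on the nose rather than merely up to homotopy, which is immediate from the group axioms, so that the cited propositions apply verbatim.
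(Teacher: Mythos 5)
Your proof is correct and follows essentially the same route as the paper: the inequality $\Dm(\id_G,\mathbf{I})\le\Dm(\mu,\delta)$ via precomposition with the section $x\mapsto(x,e)$ is identical, and your decomposition $\delta=\mu\circ(\mathbf{I}\times\id_G)$ combined with the composition and product formulas is exactly what the paper's intermediate lemma $\Dm(f\cdot h,g\cdot h)\le\Dm(f,g)$ unpacks to, since $(\mathbf{I}\circ\pr_1)\cdot\pr_2=\mu\circ(\mathbf{I}\times\id_G)$. The only cosmetic difference is that you apply the composition and product propositions directly rather than citing the packaged $H$-space lemma.
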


\begin{proof}
Recall that $\mu \circ \iota_1 =\id_G$ and $\delta \circ \iota_1 = \mathbf{I}$. 
Hence, by \Cref{prop: m-hom-dist under composition} (2), we have 
$$
    \Dm(\id_G, \mathbf{I}) 
        = \Dm(\mu \circ \iota_1, \delta \circ \iota_1)
        \leq \Dm(\mu, \delta).
$$
Furthermore,  
$
    \mu = \pr_1 \cdot \pr_2 = (\id_X \circ\, \pr_1) \cdot \pr_2
$ 
and 
$
    \delta = (\mathbf{I} \circ \pr_1) \cdot \pr_2
$
imply that
$$
    \Dm(\mu, \delta) 
        = \Dm((\id_G \circ\, \pr_1) \cdot \pr_2, (\mathbf{I} \circ \pr_1) \cdot \pr_2)
        \leq \Dm(\id_G \circ\, \pr_1, \mathbf{I} \circ \pr_1)
        \leq \Dm(\id_G, \mathbf{I})
$$
by \Cref{prop: H-space m-hom-dist(fh:gh) leq m-hom-dist(f:g)} and \Cref{prop: m-hom-dist under composition} (2).
If $G$ is path-connected, then the inequality $\Dm(\mu, \delta) \leq \mcat(G)$ follows from \Cref{thm: H-space m-hom-dist(f:g) leq m-cat(X)}.
\end{proof}

\begin{corollary}
For a topological group $G$, we have
$$
    \Dm(\mu_a,\mu_b) = \Dm(\mu_{a-b},c_{e}),
$$
where $\mu_a \colon G \to G$ is the power map defined by $\mu_a(g) = g^{a}$, and $c_e \colon G \to G$ is the constant map taking the value of the identity element $e \in G$.
\end{corollary}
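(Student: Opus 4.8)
The plan is to derive the equality from the product inequality \Cref{prop: H-space m-hom-dist(fh:gh) leq m-hom-dist(f:g)}, which asserts that $\Dm(f\cdot h, g\cdot h) \le \Dm(f,g)$ for any maps $f,g,h$ into an $H$-space, where $\cdot$ denotes the pointwise product. A topological group $G$ is an $H$-space with identity $e$, so this is available with $X = Y = G$ and with $f,g,h$ taken among the continuous power maps $\mu_r \colon g \mapsto g^r$, the constant map $c_e$, and the inversion $\mathbf{I}$.

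First I would record the elementary identities expressing that power maps are additive under the pointwise product. Since any two powers $g^r$ and $g^s$ of a single element of $G$ commute (so no commutativity of $G$ is needed), one has $g^r g^s = g^{r+s}$ and $(g^s)^{-1} = g^{-s}$ pointwise; translating these into the pointwise product of maps $G \to G$ yields
\[
\mu_a = \mu_{a-b}\cdot \mu_b, \qquad \mu_b = c_e \cdot \mu_b, \qquad \mu_{a-b} = \mu_a \cdot \mu_{-b}, \qquad c_e = \mu_b \cdot \mu_{-b},
\]
where $\mu_{-b} := \mathbf{I}\circ \mu_b$ is the power map $g \mapsto g^{-b}$.

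Then the two halves of the equality follow immediately. Applying \Cref{prop: H-space m-hom-dist(fh:gh) leq m-hom-dist(f:g)} with common factor $h = \mu_b$ to the first two identities gives $\Dm(\mu_a,\mu_b) = \Dm(\mu_{a-b}\cdot\mu_b,\; c_e\cdot\mu_b) \le \Dm(\mu_{a-b},c_e)$, and applying it with $h = \mu_{-b}$ to the last two gives $\Dm(\mu_{a-b},c_e) = \Dm(\mu_a\cdot\mu_{-b},\; \mu_b\cdot\mu_{-b}) \le \Dm(\mu_a,\mu_b)$; combining the two inequalities yields the claim. I do not anticipate a real obstacle here; the only thing requiring care is choosing the right factorizations and checking the pointwise-multiplicative identities for power maps (in particular that they hold without assuming $G$ abelian), after which the cited product inequality does all the work.
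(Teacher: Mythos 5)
Your proposal is correct and follows essentially the same route as the paper: both directions are obtained by applying \Cref{prop: H-space m-hom-dist(fh:gh) leq m-hom-dist(f:g)} to the factorizations $\mu_a = \mu_{a-b}\cdot\mu_b$, $\mu_b = c_e\cdot\mu_b$ and $\mu_{a-b} = \mu_a\cdot\mu_{-b}$, $c_e = \mu_b\cdot\mu_{-b}$. Your added remark that the pointwise identities need no commutativity of $G$ (since powers of a single element commute) is a correct and worthwhile clarification, but the argument is otherwise the paper's.
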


\begin{proof}
Applying \Cref{prop: H-space m-hom-dist(fh:gh) leq m-hom-dist(f:g)}, we get the following inequalities
$
    \Dm(\mu_a,\mu_b) 
        = \Dm(\mu_{a-b} \cdot \mu_{b}, c_e \cdot \mu_b)
        \leq \Dm(\mu_{a-b},c_e)
$
and
$
    \Dm(\mu_{a-b},c_e) 
        = \Dm(\mu_{a} \cdot \mu_{-b},\mu_{b} \cdot \mu_{-b})
        \leq \Dm(\mu_{a},\mu_{b}).
$
\end{proof}

\begin{example}
\normalfont{
Consider the Lie group $G=U(2)$ of $2 \times 2$ complex matrices $A$ such that $A^* = A^{-1}$, and which is topologically $S^1 \times S^3$. 
Note that the integral cohomology ring of $G$ is an exterior algebra generated by a degree one generator $x_1$ and a degree three generator $x_3$ such that $x_1 \smile x_3 \neq 0$.
Let $\id_G \colon G \to G$ be the identity map and let $\mathbf{I} \colon G \to G$ be the inversion map, given by $\mathbf{I}(A) = A^* = A^{-1}$.
Then $\mathbf{I}^{*}(x_1) = -x_1$ and $\mathbf{I}^{*}(x_3) = -x_3$. 
Thus, by \Cref{thm : lower bound for H*Dm}, it follows that $\HD_1(\id_G,\mathbf{I};\mathbb{Z}) \geq 1$ and $\HD_3(\id_G,\mathbf{I};\mathbb{Z}) \geq 2$.
Therefore, the inequalities
$$
    1 \leq \HD_1(\id_G,\mathbf{I};\mathbb{Z}) 
        \leq \HD_2(\id_G,\mathbf{I};\mathbb{Z}) 
        \leq \mathrm{D}_2(\id_G,\mathbf{I};\mathbb{Z})
        \leq \ct_2(G) 
        = 1
$$
implies $\mathrm{D}_1(\id_G,\mathbf{I}) = \mathrm{D}_2(\id_G,\mathbf{I}) = 1$. 
By \Cref{prop: monotonicity of m-cohom-dist}, we have $\HDm(\id_G,\mathbf{I};\mathbb{Z}) \geq 2$ for all $m \geq 3$.
Hence, the inequalities
$$
    \HDm(\id_G,\mathbf{I};\mathbb{Z}) 
        \leq \Dm(\id_G,\mathbf{I}) 
        \leq \mcat(G) 
        = 2
$$
implies $\Dm(\id_G,\mathbf{I}) = 2$ for all $m \geq 3$. 
Note that $\mathrm{D}(\id_G,\mathbf{I}) = 2$, see \cite[Example 5.4]{hom-dist-bw-maps}.
This shows that the aspherical in higher dimensions assumption in \Cref{prop: m-hom-dist = hom-dist if Y is ''aspherical''} is not strictly necessary.
}
\end{example}

\section{Acknowledgments}
The first author would like to acknowledge IISER Pune - IDeaS Scholarship and Siemens-IISER Ph.D. fellowship for economical support. The second author acknowledges the UGC junior research fellowship. N. Daundkar gratefully acknowledge the support of DST–INSPIRE Faculty Fellowship (Faculty Registration No. IFA24-MA218), as well as Industrial Consultancy and Sponsored Research (IC\&SR), Indian Institute of Technology Madras for the New Faculty Initiation Grant (RF25261395MANFIG009294). The fourth author thanks the National Board for Higher Mathematics (NBHM) for Grant No. 0204/7/2025/R\&D-II/8842.


\bibliographystyle{plain} 
\bibliography{references}

@article {Dr,
    AUTHOR = {Dranishnikov, Alexander},
     TITLE = {On topological complexity of non-orientable surfaces},
   JOURNAL = {Topology Appl.},
  FJOURNAL = {Topology and its Applications},
    VOLUME = {232},
      YEAR = {2017},
     PAGES = {61--69},
      ISSN = {0166-8641,1879-3207},
   MRCLASS = {55M30 (55R05 55S35 57N05)},
  MRNUMBER = {3720882},
MRREVIEWER = {Jose\ M.\ Garc\'ia Calcines},
       DOI = {10.1016/j.topol.2017.09.022},
       URL = {https://doi.org/10.1016/j.topol.2017.09.022},
}

@article {TCKleinbottle,
    AUTHOR = {Cohen, Daniel C. and Vandembroucq, Lucile},
     TITLE = {Topological complexity of the {K}lein bottle},
   JOURNAL = {J. Appl. Comput. Topol.},
  FJOURNAL = {Journal of Applied and Computational Topology},
    VOLUME = {1},
      YEAR = {2017},
    NUMBER = {2},
     PAGES = {199--213},
      ISSN = {2367-1726},
   MRCLASS = {55M30 (20J06 55N25 57T30)},
  MRNUMBER = {3975552},
MRREVIEWER = {Marzieh Bayeh},
      
}

@article {Mc,
    AUTHOR = {McCord, M. C.},
     TITLE = {Classifying spaces and infinite symmetric products},
   JOURNAL = {Trans. Amer. Math. Soc.},
  FJOURNAL = {Transactions of the American Mathematical Society},
    VOLUME = {146},
      YEAR = {1969},
     PAGES = {273--298},
      ISSN = {0002-9947,1088-6850},
   MRCLASS = {55.30},
  MRNUMBER = {251719},
MRREVIEWER = {A.\ Dold},
       DOI = {10.2307/1995173},
       URL = {https://doi.org/10.2307/1995173},
}

@article {James,
    AUTHOR = {James, Ioan M.},
     TITLE = {On category, in the sense of {L}usternik--{S}chnirelmann},
   JOURNAL = {Topology},
    VOLUME = {17},
      YEAR = {1978},
     PAGES = {331--348}
}

@Article{B-G,
 Author = {Berstein, Israel and Ganea, Tudor},
 Title = {The category of a map and of a cohomology class},
 FJournal = {Fundamenta Mathematicae},
 Journal = {Fund. Math.},
 ISSN = {0016-2736},
 Volume = {50},
 Pages = {265--279},
 Year = {1961/62},
 Language = {English},
 DOI = {10.4064/fm-50-3-265-279},
 zbMATH = {3305695},
 Zbl = {0192.29302}
}

@article{L-S-cat,
	author = {Lyusternik, Lazar and \v{S}nirelmann, Levi},
	volume = {42},
	publisher = {Presses Universitaires de France},
	title = {M\'{e}thodes Topologiques Dans les Probl\`{e}mes Variationnels},
	year = {1934},
	journal = {Actualit\'{e}s scientifiques et industrielles, vol. 188, Expos\'{e}s sur l’analyse math\'{e}matique et ses applications, vol.3, Hermann, Paris},
	pages = {},
	number = {}
}

@article {F-G-L-O,
    AUTHOR = {Farber, Michael and Grant, Mark and Lupton, Gregory and Oprea,
              John},
     TITLE = {An upper bound for topological complexity},
   JOURNAL = {Topology Appl.},
  FJOURNAL = {Topology and its Applications},
    VOLUME = {255},
      YEAR = {2019},
     PAGES = {109--125},
      ISSN = {0166-8641,1879-3207},
   MRCLASS = {55M30},
  MRNUMBER = {3905237},
MRREVIEWER = {Marzieh\ Bayeh},
       DOI = {10.1016/j.topol.2019.01.007},
       URL = {https://doi.org/10.1016/j.topol.2019.01.007},
}

@book {B,
    AUTHOR = {Baues, Hans Joachim},
     TITLE = {Algebraic homotopy},
    SERIES = {Cambridge Studies in Advanced Mathematics},
    VOLUME = {15},
 PUBLISHER = {Cambridge University Press, Cambridge},
      YEAR = {1989},
     PAGES = {xx+466},
      ISBN = {0-521-33376-8},
   MRCLASS = {55P15 (55-02 55Q05)},
  MRNUMBER = {985099},
MRREVIEWER = {Charles\ A.\ McGibbon},
       DOI = {10.1017/CBO9780511662522},
       URL = {https://doi.org/10.1017/CBO9780511662522},
}

@article{schwarz1961genus,
  title={The genus of a fibre space},
  author={Schwarz, Albert S},
  journal={Trudy Moskovskogo Matematicheskogo Obshchestva},
  volume={10},
  pages={217--272},
  year={1961},
  publisher={Moscow Mathematical Society}
}

@article{oprea2011mixing,
  title={Mixing categories},
  author={Oprea, John and Strom, Jeff},
  journal={Proceedings of the American Mathematical Society},
  volume={139},
  number={9},
  pages={3383--3392},
  year={2011}
}

@article{dranishnikov2009lusternik,
  title={On the Lusternik-Schnirelmann category of spaces with 2-dimensional fundamental group},
  author={Dranishnikov, Alexander},
  journal={Proceedings of the American Mathematical Society},
  volume={137},
  number={4},
  pages={1489--1497},
  year={2009}
}

@article {MR2879375,
    AUTHOR = {Oprea, John and Strom, Jeff},
     TITLE = {On {F}ox's {$m$}-dimensional category and theorems of
              {B}ochner type},
   JOURNAL = {Topology Appl.},
  FJOURNAL = {Topology and its Applications},
    VOLUME = {159},
      YEAR = {2012},
    NUMBER = {5},
     PAGES = {1448--1461},
      ISSN = {0166-8641,1879-3207},
   MRCLASS = {55M30 (53C21 55M15)},
  MRNUMBER = {2879375},
MRREVIEWER = {Tan\ Zhang},
       DOI = {10.1016/j.topol.2012.01.007},
       URL = {https://doi.org/10.1016/j.topol.2012.01.007},
}

@article {m-hom-dist,
    AUTHOR = {Mac\'ias-Virg\'os, Enrique and Mosquera-Lois, David and Oprea,
              John},
     TITLE = {{$m$}-homotopic distance},
   JOURNAL = {Topology Appl.},
  FJOURNAL = {Topology and its Applications},
    VOLUME = {339},
      YEAR = {2023},
     PAGES = {Paper No. 108589, 9},
      ISSN = {0166-8641,1879-3207},
   MRCLASS = {55M30 (55P99)},
  MRNUMBER = {4644374},
MRREVIEWER = {Jose\ M.\ Garc\'ia Calcines},
       DOI = {10.1016/j.topol.2023.108589},
       URL = {https://doi.org/10.1016/j.topol.2023.108589},
}

@article {Fox-LS-category,
    AUTHOR = {Fox, Ralph H.},
     TITLE = {On the {L}usternik-{S}chnirelmann category},
   JOURNAL = {Ann. of Math. (2)},
  FJOURNAL = {Annals of Mathematics. Second Series},
    VOLUME = {42},
      YEAR = {1941},
     PAGES = {333--370},
      ISSN = {0003-486X},
   MRCLASS = {56.0X},
  MRNUMBER = {4108},
MRREVIEWER = {H.\ Whitney},
       DOI = {10.2307/1968905},
       URL = {https://doi.org/10.2307/1968905},
}

@article{Coh-dist,
  title={A Computational (Co) homological Approach to Contiguity Distance},
  author={Mac{\'\i}as-Virg{\'o}s, Enrique and M{\'e}ndez-V{\'a}zquez, {\'A}ngel and Mosquera-Lois, David},
  journal={arXiv preprint arXiv:2510.18356},
  year={2025}
}

@article {hom-dist-bw-maps,
    AUTHOR = {Mac\'ias-Virg\'os, E. and Mosquera-Lois, D.},
     TITLE = {Homotopic distance between maps},
   JOURNAL = {Math. Proc. Cambridge Philos. Soc.},
  FJOURNAL = {Mathematical Proceedings of the Cambridge Philosophical
              Society},
    VOLUME = {172},
      YEAR = {2022},
    NUMBER = {1},
     PAGES = {73--93},
      ISSN = {0305-0041,1469-8064},
   MRCLASS = {55M30 (55P45 55R10 68T40)},
  MRNUMBER = {4354415},
MRREVIEWER = {Nicholas\ A.\ Scoville},
       DOI = {10.1017/S0305004121000116},
       URL = {https://doi.org/10.1017/S0305004121000116},
}

@article {Grant-ParaTC-group,
    AUTHOR = {Grant, Mark},
     TITLE = {Parametrised topological complexity of group epimorphisms},
   JOURNAL = {Topol. Methods Nonlinear Anal.},
  FJOURNAL = {Topological Methods in Nonlinear Analysis},
    VOLUME = {60},
      YEAR = {2022},
    NUMBER = {1},
     PAGES = {287--303},
      ISSN = {1230-3429},
   MRCLASS = {55M30 (20J06 55P20)},
  MRNUMBER = {4524869},
MRREVIEWER = {Sam\ Hughes},
}

@book{munkres2018elements,
  title={Elements of algebraic topology},
  author={Munkres, James R},
  year={2018},
  publisher={CRC Press}
}

@book {husemoller1994fibre,
    AUTHOR = {Husemoller, Dale},
     TITLE = {Fibre bundles},
    SERIES = {Graduate Texts in Mathematics},
    VOLUME = {20},
   EDITION = {Third},
 PUBLISHER = {Springer-Verlag, New York},
      YEAR = {1994},
     PAGES = {xx+353},
      ISBN = {0-387-94087-1},
   MRCLASS = {55-01 (19Lxx 55-02 55Rxx 57R20 57R22)},
  MRNUMBER = {1249482},
MRREVIEWER = {Jo\v ze\ Vrabec},
       DOI = {10.1007/978-1-4757-2261-1},
       URL = {https://doi.org/10.1007/978-1-4757-2261-1},
}

@article {Farber-TC,
    AUTHOR = {Farber, Michael},
     TITLE = {Topological complexity of motion planning},
   JOURNAL = {Discrete Comput. Geom.},
  FJOURNAL = {Discrete \& Computational Geometry. An International Journal
              of Mathematics and Computer Science},
    VOLUME = {29},
      YEAR = {2003},
    NUMBER = {2},
     PAGES = {211--221},
      ISSN = {0179-5376,1432-0444},
   MRCLASS = {68T40 (55M30 68U05)},
  MRNUMBER = {1957228},
       DOI = {10.1007/s00454-002-0760-9},
       URL = {https://doi.org/10.1007/s00454-002-0760-9},
}

@article {Farber-Tabachnikov-YuzvinskyTC,
    AUTHOR = {Farber, Michael and Tabachnikov, Serge and Yuzvinsky, Sergey},
     TITLE = {Topological robotics: motion planning in projective spaces},
   JOURNAL = {Int. Math. Res. Not.},
  FJOURNAL = {International Mathematics Research Notices},
      YEAR = {2003},
    NUMBER = {34},
     PAGES = {1853--1870},
      ISSN = {1073-7928,1687-0247},
   MRCLASS = {55M30 (70B15)},
  MRNUMBER = {1988783},
MRREVIEWER = {Octavian\ Cornea},
       DOI = {10.1155/S1073792803210035},
       URL = {https://doi.org/10.1155/S1073792803210035},
}

@book {CLOT,
    AUTHOR = {Cornea, Octav and Lupton, Gregory and Oprea, John and Tanr\'{e},
              Daniel},
     TITLE = {Lusternik-{S}chnirelmann category},
    SERIES = {Mathematical Surveys and Monographs},
    VOLUME = {103},
 PUBLISHER = {American Mathematical Society, Providence, RI},
      YEAR = {2003},
     PAGES = {xviii+330},
      ISBN = {0-8218-3403-5},
   MRCLASS = {55M30 (53D35 55P62 55Q25 57R17)},
  MRNUMBER = {1990857},
MRREVIEWER = {Samuel B. Smith},
       DOI = {10.1090/surv/103},
       URL = {https://doi.org/10.1090/surv/103},
}
\end{document}